\documentclass[11pt,makeidx]{amsart}

\usepackage{amsthm,amsfonts,amssymb,amsmath,amscd,amsrefs,thmtools}
\usepackage[all]{xy}
\usepackage{xr-hyper}
\usepackage{color}
\definecolor{crimsonglory}{rgb}{0.75, 0.0, 0.2}
\definecolor{darkblue}{rgb}{0.0, 0.0, 0.55}
	\definecolor{deepskyblue}{rgb}{0.0, 0.75, 1.0}

\usepackage[colorlinks=true,linkcolor = blue, citecolor = deepskyblue]{hyperref}

\usepackage[OT2,OT1]{fontenc}
\usepackage{mathrsfs}
\usepackage{multirow}
\usepackage{graphicx}

\topmargin = 0.2 in
\headheight = 0.0 in
\headsep = 0.4 in
\parskip = 0.075 in
\parindent = 0.3 in
\textwidth = 6 in
\textheight = 8.5 in
\oddsidemargin = 0.25 in
\evensidemargin = 0.25 in
\hfuzz 12pt
\overfullrule 0pt

\newtheoremstyle{mystyle}
  {8pt}
  {3pt}
  {\em}
  {}
  {\scshape}
  {.}
  {3pt}
  {}

\newcommand{\tr}{\text{tr}}
\newcommand{\SL}{\text{SL}_2}
\newcommand{\EL}{\text{EL}_2}
\newcommand{\PSL}{\text{PSL}_2}
\newcommand{\M}{\mathbf{M}}
\newcommand{\Fp}{\mathbb{F}_p}
\newcommand{\Fpp}{\mathbb{F}_{p^2}}
\newcommand{\Tr}{\text{Tr}}

\newcommand{\Z}{\mathbb{Z}}
\newcommand{\GL}{\text{GL}_2}
\newcommand{\E}{\text{E}}
\newcommand{\Orb}{\mathcal{O}}
\newcommand{\e}{\text{e}}
\newcommand{\cc}{\mathcal{C}}
\newcommand{\ee}{\mathcal{E}}

\theoremstyle{mystyle}

\newcounter{exe}
  
\newtheorem{thm}{\textsc{Theorem}}

\newtheorem{cor}[exe]{\textsc{Corollary}}
\newtheorem{conj}{\textsc{Conjecture}}
\newtheorem{lemma}[exe]{\textsc{Lemma}}
\newtheorem{prop}[exe]{\textsc{Proposition}}

\theoremstyle{definition}

\usepackage{color}


\title[]{Markoff Triples and Generating Pairs of $\SL(\Fp)$}
\author{Jo\~ao Campos-Vargas}
\email{joaoccv@stanford.edu}
\address{Department of Mathematics, Princeton University, Princeton, NJ 08544, USA.}

\begin{document}

\begin{abstract}

Consider the level sets of the Markoff equation
\[\M_k: x^2 + y^2 + z^2 - xyz - 2 = k.\]
The phenomenon of \textit{strong approximation}, as named by Bourgain, Gamburd, and Sarnak, predicts that every solution of $\M_k$ over $\Fp$ descends from a solution over $\Z$. Moreover, we expect that the action of Vieta involutions (taking $(x, y, z)$ to $(yz-x, y, z)$, $(x, xz-y, z)$, and $(x, y, xy-z)$) on $\M_k(\Fp)$ is essentially transitive. In terms of matrices, Vieta involutions correspond to Nielsen moves in pairs $(A, B) \in \SL(\Fp) \times \SL(\Fp)$ for which $\tr([A, B]) = k$. This correspondence is induced by
\[\Tr: (A, B) \mapsto (\tr(A), \tr(B), \tr(AB)).\]

\par McCullough and Wanderley conjectured that Nielsen moves connect two pairs $(A_1, B_1)$, $(A_2, B_2)$ of generators of $\SL(\Fp)$ if and only if $[A_1, B_1]$ is conjugate to $[A_2, B_2]$ or $[B_2, A_2]$. Based on this, one expects that generating pairs $(A, B)$ of $\SL(\Fp)$ for which $\tr([A, B]) = k$ determine a single orbit of $\M_k(\Fp)$, and the remaining \textit{exceptional} orbits come from non-generating pairs of $\SL(\Fp)$.

\par In this article, we describe the set of exceptional orbits of $\M_k(\Fp)$, showing that they agree with the finite orbits of the equation $\M_k$ over $\mathbb{C}$ found by Dubrovin and Mazzocco. Furthermore, we prove that the conjecture of McCullough and Wanderley is equivalent to strong approximation when $p \equiv 3 \mod{4}$. Lastly, we present the recent developments of Chen on the problem and use our classification of exceptional orbits to make a divisibility conjecture about the size of the largest orbit of $\M_k(\Fp)$.
\end{abstract}

\maketitle

\section*{Introduction}\label{intro}

\par While studying the problem of Diophantine approximations, Markoff came across a set of numbers $\mathcal{M}$ corresponding to the positive coordinates of solutions of the equation
\[\mathbf{X}: x^2 + y^2 + z^2 = 3xyz.\]
The set $\mathcal{M}$ proved to be remarkably complex, and its elements -- known as Markoff numbers -- seem to be better understood when viewed dynamically. Namely, if $\mathbf{X}(\mathbb{Z})$ denotes the set of integer solutions of $\mathbf{X}$ and $(x, y, z)$ is one of its elements, then its coordinate permutations together with the Vieta involution $(3yz - x, y, z)$ are also in $\mathbf{X}(\mathbb{Z})$. This can be viewed as a group action which partitions $\mathbf{X}(\mathbb{Z})$ into different orbits. Remarkably, Markoff showed that $\mathbf{X}(\mathbb{Z}) \setminus \{(0, 0, 0)\}$ consists of a single orbit, so that the action is nearly transitive.

\par Similarly, we define $\mathbf{X}(\mathbb{F}_p)$ as the set of solutions of $\mathbf{X}$ over $\mathbb{F}_p$. Let $\pi: \mathbf{X}(\mathbb{Z}) \to \mathbf{X}(\mathbb{F}_p)$ be the map given by reduction modulo $p$. As for any other object that can be defined over $\mathbb{Z}$ and $\mathbb{F}_p$, it is natural to ask whether or not the map $\pi$ is surjective. This type of problem is known as a strong approximation problem and it is the central theme of this work. Notice that in this case, this question is equivalent to the classification of orbits of $\mathbf{X}(\mathbb{F}_p)$, and the map $\pi$ will be surjective if and only if $\mathbf{X}(\mathbb{F}_p)$ consists of exactly two orbits: $\{(0, 0, 0)\}$ and $\mathbf{X}(\Fp) \setminus \{(0, 0, 0)\}$.

\par Upon a change of variables, we notice that $\mathbf{X}$ is part of a family of expressions carrying a more subtle algebraic structure. These are given by
\[\mathbf{M}_{k}: x^2 + y^2 + z^2 - xyz - 2 = k,\]
where our original equation $\mathbf{X}$ corresponds to $\mathbf{M}_{-2}$. Although in general $\mathbf{M}_k(\mathbb{Z})$ has a more complicated orbit structures for generic values of $k$, it has been conjectured that $\mathbf{M}_k(\mathbb{F}_p)$ consists of essentially one orbit when $k \ne 2$. The main goal of this work is to give a precise formulation for this conjecture together with a general picture of the theory.

\par The algebraic structure behind the equations $\mathbf{M}_k$ comes from an identity discovered by Fricke which precedes Markoff's work. This identity relates triples of elements with pairs of elements in the two dimensional special linear group. The study of such pairs over $\mathbb{F}_p$ appears in the work of Macbeath and provides enough information to describe small orbits of $\mathbf{M}_k(\mathbb{F}_p)$. Remarkably, these orbits appear in the work of Dubrovin and Mazzocco as the only finite orbits for the equations $\M_k$ over $\mathbb{C}$. 

\par A precise picture of these small orbits will allow us to make a conjectural statement for the problem of strong approximation previously described. Moreover, it gives us an analytic picture of the Q-classification conjecture proposed by McCullough and Wanderley on generating pairs of $\text{SL}_2(\mathbb{F}_p)$, which further motivates the study of the problem.

\par The existence of a very large orbit containing most points of $\mathbf{M}_k(\mathbb{F}_p)$ comes from the remarkable work of Bourgain, Gamburd, and Sarnak. Their result shows, effectively, that there are at most $p^{\epsilon}$ points outside of the main orbit of $\mathbf{M}_k(\mathbb{F}_p)$. Through this result, the conjecture is reduced to the statement that the only points in $\mathbf{M}_k(\mathbb{F}_p)$ with orbit size bounded by $p^{\epsilon}$ are those in the small orbits derived from Macbeath's work, which in fact have absolutely bounded size.

\par The most recent developments in the problem come with the work of Chen, proving divisibility conditions for the size of the orbits of $\mathbf{M}_k(\mathbb{F}_p)$. In the case $k = -2$, Chen was able to show that $p$ divides the size of all orbits except $\{(0, 0, 0)\}$, which is enough to prove the conjecture. For generic values of $k$, the expected size of the main orbit of $\mathbf{M}_k(\mathbb{F}_p)$ still satisfies strong divisibility conditions, although this is not implied by Chen's work.

\par In our presentation, we try to follow the natural order of developments in the problem. Section \ref{foundations} lays out the foundations, introducing the Markoff equation, Fricke's identity, and Nielsen moves. In Section \ref{technical}, we present some technical results that will be used in later sections, including a classification of subgroups of $\PSL(\Fp)$. Section \ref{macbeath} covers results of Macbeath that give structure to the trace map. In Section \ref{main}, we expand on Macbeath's work and classify all exceptional orbits of $\M_k(\Fp)$. 

\par Section \ref{strong} combines the previous sections to give an analytic formulation of strong approximation and an exposition of the Q-classification. Moreover, it presents a proof of equivalence between the two conjectures when $p \equiv 3 \pmod{4}$, and it covers the recent work of Chen involving divisibility conditions for the size of orbits of $\M_k(\Fp)$. Lastly, we present a conjecture involving divisibility conditions that would imply strong approximation for generic levels.

\textit{Acknowledgments.} The author would like to thank his advisor Peter Sarnak for all his encouragement in the preparation of this thesis.

\section{{\it Foundations:} Markoff, Fricke \& Nielsen}\label{foundations}

\subsection{The Markoff Equation} \hfill

\par Consider the Markoff equation \[\mathbf{X}: x^2 + y^2 + z^2 = 3xyz.\]
Denote by $\mathbf{X}(\mathbb{Z})$ the set of integer solutions of $\mathbf{X}$. Observe that $(x, y, z)$ is in $\mathbf{X}(\Z)$ if and only if any of its coordinate permutations also is. Moreover, $\mathbf{X}$ can be viewed as a quadratic equation on, say, $z$, so that $(x, y, z) \in \mathbf{X}(\mathbb{Z})$ if and only if $(x, y, 3xy-z) \in \mathbf{X}(\mathbb{Z})$. These equivalences define orbits in the set of solutions $\mathbf{X}(\mathbb{Z})$. More precisely, let 
\[r(x, y, z) = (y, x, z), \text{ } s(x, y, z) = (x, z, y), \text{ } t(x, y, z) = (x, y, 3xy - z).\]
Then, the action of permuting coordinates, or replacing a coordinate by its second root in the quadratic equation $\mathbf{X}$, can be generated by a composition of $r, s,$ and $t$. Naturally, the orbit of $(x, y, z)$ is defined as the subset of solutions of $\mathbf{X}$ obtained by applying finite sequences of compositions of $r, s,$ and $t$ to $(x, y, z)$.

\par Observe that $r, s,$ and $t$ are all involutions, i.e. they are their own inverse. The operation $t$ is referred to as a Vieta involution, and although it seems to have a different nature from $r$ and $s$, it will become clear later that they can be viewed as part of the same phenomenon.

\par As a variety with integer coefficients, the Markoff equation can also be defined over $\mathbb{F}_p$, yielding a set of solutions $\mathbf{X}(\mathbb{F}_p)$. It is natural to ask whether or not the canonical map
\[\pi: \mathbf{X}(\mathbb{Z}) \to \mathbf{X}(\mathbb{F}_p)\]
given by reduction modulo $p$ is surjective. This is equivalent to the question of whether or not every solution of $\mathbf{X}$ in $\mathbb{F}_p$ descends from a solution of $\mathbf{X}$ in $\mathbb{Z}$, a phenomenon we refer to as strong approximation. In order to address this question, we notice that the functions $r, s,$ and $t$ can be defined in the same way over $\mathbb{F}_p$ and $\mathbb{Z}$. Therefore, this is equivalent to asking whether or not every orbit of $\mathbf{X}(\mathbb{F}_p)$ is in the image of an orbit of $\mathbf{X}(\mathbb{Z})$.

\par The orbit structure of $\mathbf{X}(\mathbb{Z})$ is well understood. In fact, a result of Markoff (c.f. \cite{markoff1880formes}) shows that $\mathbf{X}(\mathbb{Z})$ consists of two orbits: $\{(0, 0, 0)\}$ and $\mathbf{X}(\mathbb{Z}) \setminus \{(0, 0, 0)\}$. This amounts to saying that every element in $\mathbf{X}(\mathbb{Z}) \setminus \{(0, 0, 0)\}$ can be obtained by composing the functions $r, s,$ and $t$ to the initial solution $(1, 1, 1)$. Therefore, strong approximation is equivalent to showing that $\mathbf{X}(\mathbb{F}_p)$ consists of exactly two orbits, as $\{(0, 0, 0)\}$ is also an orbit in this case.

\par At this point, it is convenient to make a change of variables in $\mathbf{X}$. Consider the Markoff-type equation given by \[\mathbf{M}: x^2 + y^2 + z^2 = xyz.\]

\par There is a map $\mathbf{X}(\mathbb{Z}) \to \mathbf{M}(\mathbb{Z})$ given by $(x, y, z) \mapsto (3x, 3y, 3z)$. It is clear that this map is injective, and it is not hard to see it will also be surjective. Similarly, $\mathbf{X}(\mathbb{F}_p) \to \mathbf{M}(\mathbb{F}_p)$ also establishes a bijection for $p \ne 3$. In this context, Markoff's result classifies the orbits of $\mathbf{M}(\mathbb{Z})$, and strong approximation holds if and only if $\mathbf{M}(\mathbb{F}_p)$ consists of two orbits: $\{(0, 0, 0)\}$ and $\mathbf{M}(\mathbb{F}_p) \setminus \{(0, 0, 0)\}$. We now shift our focus to the orbits of $\M(\Fp)$, which carry a subtle algebraic structure.

\subsection{Fricke's Identity}\label{fricke} \hfill

\par The advantage of working with $\mathbf{M}$ over $\mathbf{X}$ is its moduli interpretation. This is a general phenomenon pertaining to the family of equations
\[\mathbf{M}_k : x^2 + y^2 + z^2 - xyz - 2 = k,\]
defined over an arbitrary field $\mathbb{F}$. Although our main focus will be to study these equations over $\Fp$, we present the ideas in this section in their original context, with no restrictions on the field $\mathbb{F}$.

\par Here, the Markoff-type equation $\M$ corresponds to $\M_{-2}$. For each $(x, y, z) \in \mathbb{F}^3$, there is a unique $k \in \mathbb{F}$ such that $(x, y, z) \in \M_k(\mathbb{F})$, which will be called the level of the triple $(x, y, z)$. Remarkably, the solutions for each level appear as traces of matrices in $\SL(\mathbb{F})$. More precisely, we have the following result due to Fricke (c.f. \cite{fricke1896theorie}):

{\lemma[Fricke's Identity]\label{fricke_id} Let $A, B$ be elements of $\SL(\mathbb{F})$. Then,
\[\tr(A)^2 + \tr(B)^2 + \tr(AB)^2 - \tr(A)\tr(B)\tr(AB) - 2 = \tr([A, B]),\]
where $[A, B]$ denotes the commutator of $A$ and $B$ in $\SL(\mathbb{F})$.}

\begin{proof}
We start with $A + A^{-1} = \tr(A)I$. By multiplying $AB$ on the left and $B^{-1}$ on the right, we obtain $ABAB^{-1} + [A, B] = \tr(A)A$. By taking traces, we get \[\tr(A)^2 - \tr(ABAB^{-1}) = \tr([A, B]).\]
\par Similarly $B + B^{-1} = \tr(B)I$, so that $ABAB^{-1} = \tr(B)ABA - (AB)^2$. By taking traces and using the fact that $\tr(X^2) = \tr(X)^2 - 2$ we get $\tr(ABAB^{-1}) = \tr(B)\tr(ABA) - \tr(AB)^2 + 2$. Therefore,
\[ \tr(A)^2 + \tr(AB)^2 - \tr(B)\tr(ABA) - 2 = \tr([A, B]).\]
\par Finally, notice that $ABA + ABA^{-1} = \tr(A)AB$. By taking traces and using that trace is invariant under conjugation, we obtain $\tr(ABA) = \tr(A)\tr(AB) - \tr(B)$. Hence
\[tr(A)^2 + \tr(B)^2 + \tr(AB)^2 - \tr(A)\tr(B)\tr(AB) - 2 = \tr([A, B]),\]
as desired. \end{proof}

\par This identity motivates the definition of the trace map $\Tr: \SL(\mathbb{F}) \times \SL(\mathbb{F}) \to \mathbb{F}^3$,
\[\Tr: (A, B) \mapsto (\tr(A), \tr(B), \tr(AB)).\]
With this notation, we have the following corollary:

\begin{cor}\label{fricke}
Let $(A, B)$ be a pair of elements in $\SL(\mathbb{F})$ such that $\tr([A, B]) = k$. Then, $\Tr(A, B)$ is at level $k$.
\end{cor}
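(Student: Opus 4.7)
The plan is to observe that this corollary is an immediate unpacking of Fricke's identity (Lemma~\ref{fricke_id}) against the definition of the level of a triple. Recall that the level of $(x,y,z) \in \mathbb{F}^3$ was defined as the unique $k \in \mathbb{F}$ such that $x^2 + y^2 + z^2 - xyz - 2 = k$, equivalently $(x,y,z) \in \M_k(\mathbb{F})$.

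First I would set $(x,y,z) := \Tr(A,B) = (\tr(A), \tr(B), \tr(AB))$. Then I would apply Lemma~\ref{fricke_id} directly to the pair $(A,B)$, which yields
\[x^2 + y^2 + z^2 - xyz - 2 \;=\; \tr(A)^2 + \tr(B)^2 + \tr(AB)^2 - \tr(A)\tr(B)\tr(AB) - 2 \;=\; \tr([A,B]).\]
By hypothesis $\tr([A,B]) = k$, so the triple $(x,y,z)$ satisfies $x^2 + y^2 + z^2 - xyz - 2 = k$. Hence $\Tr(A,B) \in \M_k(\mathbb{F})$, i.e., $\Tr(A,B)$ is at level $k$, as claimed.

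There is essentially no obstacle: the corollary is a direct substitution, and the only thing being used beyond Fricke's identity is the definition of the level. No case analysis or auxiliary lemma is required.
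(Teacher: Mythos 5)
Your proof is correct and matches the paper's intent exactly: the paper states this corollary without proof as an immediate consequence of Lemma~\ref{fricke_id} together with the definition of the level, which is precisely the substitution you carry out. Nothing is missing.
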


\par The level of a pair $(A, B)$ is defined to be the level of the triple $\Tr(A, B)$, or equivalently, $\tr([A, B])$. Now, observe that $\Tr(A, B) = \Tr(A^X, B^X)$ for all $X$ in $\GL(\mathbb{F})$, as conjugation does not affect the trace. Moreover, the action generated by $r, s,$ and $t$ can be extended to $\mathbb{F}^3$, and the trace map adds a combinatorial meaning to it. This interpretation is derived from Nielsen moves, which we investigate next.

\subsection{Nielsen Moves}\label{Nielsen} \hfill

\par This section digresses from the context of $\SL(\mathbb{F})$ to introduce a general combinatorial notion that appears in group theory. Let $G$ be a group and $(A, B)$ a pair of elements in it. Denote by $H$ the subgroup generated by the pair $(A, B)$ in $G$. With no previous knowledge on the group $G$, we can still combinatorially derive other pairs of elements that generate the same subgroup $H$. For instance, the following involutions yield other generating pairs of $H$:
\[r(A, B) = (B, A), \text{ }s(A, B) = (A^{-1}, AB),\text{ } t(A, B) = (A^{-1}, B).\]

\par A Nielsen move is defined as any finite composition of the functions $r, s,$ and $t$. Although this is not the standard definition $-$ which only regards certain compositions of $r, s$, and $t$ as Nielsen moves $-$ this notion is more suited for our purposes.

\par We say that two pairs of elements $(A, B)$ and $(A', B')$ are Nielsen equivalent if there is a Nielsen move mapping one to the other. In fact, all possible changes in the generating set of a generic group $G$ are induced by Nielsen moves. This is described in the following result of Nielsen (c.f. \cite{nielsen1917isomorphismen}):

\begin{thm} Let $\Pi$ be the free group on two generators. For each Nielsen move $\mathcal{N}$, let $\Phi_{\mathcal{N}}$ be the automorphism of $\Pi$ defined by extending $\mathcal{N}$ multiplicatively from the pair of generators of $\Pi$. Then, for each automorphism $\Psi$ of $\Pi$, there is a Nielsen move $\mathcal{N}$ such that $\Psi = \Phi_{\mathcal{N}}$.
\end{thm}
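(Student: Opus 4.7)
The plan is to reduce the theorem to the following combinatorial statement: every generating pair $(A, B)$ of $\Pi$ is Nielsen equivalent to the standard pair $(a, b)$ of free generators. Granting this, if $\Psi$ is any automorphism of $\Pi$, then $(\Psi(a), \Psi(b))$ is a generating pair, so there is a Nielsen move $\mathcal{N}$ with $\mathcal{N}(a, b) = (\Psi(a), \Psi(b))$; extending multiplicatively yields $\Phi_{\mathcal{N}} = \Psi$. One implicitly uses that the collection of Nielsen moves is closed under composition and inversion, the latter because $r, s, t$ are involutions, so reversing the order of a composition produces an inverse.

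The reduction is proved by induction on the total word length $L(A, B) = |A| + |B|$, where $|w|$ is the length of the reduced form of $w$ in the alphabet $\{a^{\pm 1}, b^{\pm 1}\}$. The essential ingredient is a \emph{Nielsen reduction lemma}: if $(A, B)$ generates $\Pi$ and $L(A, B) > 2$, then some composition of $r, s, t$ strictly decreases $L$. A preliminary step is to check that the group generated by $r, s, t$ contains the usual family of Nielsen transformations, namely the swap $(A, B) \mapsto (B, A)$, the sign changes $(A, B) \mapsto (A^{\pm 1}, B^{\pm 1})$ (e.g.\ via $t$ and $rtr$), and the elementary multiplications $(A, B) \mapsto (AB^{\pm 1}, B)$ and $(A, B) \mapsto (A, BA^{\pm 1})$; these are short explicit words in $r, s, t$, since already $ts(A, B) = (A, AB)$. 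Assuming without loss of generality that $|A| \le |B|$, one then argues that the generating hypothesis forces some elementary multiplication to cancel more letters of $B$ against $A^{\pm 1}$ than it introduces, thereby reducing $|B|$.

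The base case $L(A, B) = 2$ concerns pairs of single-letter words. Among the sixteen ordered such pairs, only the eight with $\{A^{\pm 1}, B^{\pm 1}\} = \{a^{\pm 1}, b^{\pm 1}\}$ can generate $\Pi$, and each is immediately reached from $(a, b)$ by at most three applications of $r, t, rtr$. The main obstacle is the reduction lemma itself: one must carry out a careful case analysis on the cancellation patterns between cyclically reduced forms of $A$ and the prefix or suffix of $B$, and show that no minimal counterexample exists. Concretely, if no elementary multiplication of $B$ by $A^{\pm 1}$ (and, after the swap $r$, none of $A$ by $B^{\pm 1}$) shortens the pair, then classical Nielsen cancellation theory shows that $A$ and $B$ can be simultaneously conjugated to words in a cyclically reduced position in which all subwords of $\{A^{\pm 1}, B^{\pm 1}\}$ appearing in any product survive without cancellation, forcing the subgroup $\langle A, B \rangle$ to consist of words whose lengths strictly grow with the number of generator-factors, contradicting $\langle A, B \rangle = \Pi$ as soon as $L(A, B) > 2$. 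This contradiction is the heart of the argument, and everything else is bookkeeping.
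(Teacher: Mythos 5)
The paper does not prove this statement --- it simply cites Nielsen's 1917 paper without reproducing the argument --- so there is no paper proof to compare against. Your outline reproduces the standard Nielsen cancellation argument correctly in its skeleton: reduce the theorem to showing that every generating pair of $\Pi$ is Nielsen equivalent to $(a,b)$, using that $(\Psi(a),\Psi(b))$ is a generating pair and that the collection of Nielsen moves is a group (since $r$, $s$, $t$ are involutions); check that this group contains all the elementary Nielsen transformations (your identity $ts(A,B) = (A,AB)$ is correct, and the right-multiplication variants $(A,BA^{\pm 1})$ and $(AB^{\pm 1},B)$ follow by conjugating with $r$ and the sign changes $t$, $rtr$); and run the length-induction powered by the reduction lemma. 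The reduction lemma you state is true, and your base case (the eight ordered single-letter generating pairs) is handled correctly.

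The one place I would ask you to be more careful is the justification of the reduction lemma itself. The hypothesis that ``no elementary multiplication shortens the pair'' is equivalent to Nielsen condition (N1), that at most half of $A^{\pm 1}$ cancels against $B^{\pm 1}$. But the conclusion you want --- that in any reduced word $u_{i_1}^{\varepsilon_1}\cdots u_{i_k}^{\varepsilon_k}$ in $\{A,B\}$ the free-group length is at least $k$ --- also needs Nielsen condition (N2), that the middle letter of the middle factor survives in triple products $u^{\pm 1} v w^{\pm 1}$; condition (N2) does not follow from (N1) alone and is usually secured by breaking ties in the reduction process with a lexicographic ordering. Relatedly, the phrase ``simultaneously conjugated to a cyclically reduced position'' is not part of Nielsen reduction and is not needed: once (N1) and (N2) both hold, the length estimate is immediate without any conjugation, and then $a, b \in \langle A, B\rangle$ forces $a, b \in \{A^{\pm 1}, B^{\pm 1}\}$, hence $L(A,B)=2$. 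These are corrections to the exposition, not to the structure of the argument, which is sound.
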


\par From this, we derive an important corollary:

\begin{cor}\label{haha}
Let $G$ be a group and $A, B$ be in $G$. Denote by $H$ the subgroup of $G$ generated by the pair $(A, B)$ and let $X \in H$. Then, $(A, B)$ is Nielsen equivalent to $(A^X, B^X)$.
\end{cor}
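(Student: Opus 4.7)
The plan is to apply Nielsen's theorem to the inner automorphism of $\Pi$ coming from a word representing $X$.

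Since $X \in H = \langle A, B \rangle$, I can write $X = w(A, B)$ for some word $w = w(a, b)$ in $\Pi$, where $a, b$ denote the free generators. Consider the inner automorphism $\Psi$ of $\Pi$ given by $\Psi(v) = w^{-1} v w$; in particular $\Psi(a) = a^w$ and $\Psi(b) = b^w$. By the preceding theorem of Nielsen, there exists a Nielsen move $\mathcal{N}$ with $\Phi_{\mathcal{N}} = \Psi$, so that $\mathcal{N}(a, b) = (a^w, b^w)$.

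Next, I would transfer this identity from $\Pi$ to $G$ via the homomorphism $\phi: \Pi \to G$ with $\phi(a) = A$ and $\phi(b) = B$. Since the functions $r, s, t$ are built from group multiplication and inversion alone, they commute coordinatewise with any group homomorphism; in particular, composing them and applying $\phi$ we obtain
\[\mathcal{N}(A, B) = (\phi(a^w), \phi(b^w)) = (X^{-1} A X, X^{-1} B X) = (A^X, B^X),\]
which exhibits $(A, B)$ and $(A^X, B^X)$ as Nielsen equivalent.

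The main content is Nielsen's theorem, which is invoked as a black box; the rest amounts to functoriality of the operations $r, s, t$ under group homomorphisms. The essential role played by the hypothesis $X \in H$ is to ensure that $X$ arises as $w(A, B)$ for some $w \in \Pi$, so that the inner automorphism of $\Pi$ by $w$ descends, through $\phi$, to simultaneous conjugation by $X$ in $G$. Without this hypothesis there is no word $w$ to feed into Nielsen's theorem, and the statement would genuinely fail.
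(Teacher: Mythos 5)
Your proof is correct and takes essentially the same route as the paper: lift $X$ to a word $w$ in the free group $\Pi$, apply Nielsen's theorem to the inner automorphism by $w$ to obtain the Nielsen move, and push forward along the homomorphism $\Pi \to G$ sending $(a,b) \mapsto (A,B)$. Your explicit remark that $r, s, t$ commute with group homomorphisms is a clean way of justifying the transfer step that the paper handles with the word ``correspondingly.''
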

\begin{proof}
Consider an expression for $X$ as a product of $A$, $B$, $A^{-1}$, and $B^{-1}$. For each term in this expression, replace $A$ by $a$ and $B$ by $b$, and let $x$ be the string obtained by this process. Let $\Pi$ be the free group on two generators, which we denote by $a$ and $b$, and let $x$ be the corresponding element of $\Pi$.

\par As conjugation by $x$ induces an automorphism $\Psi$ in $\Pi$, there is a Nielsen move $\mathcal{N}$ such that $\Psi = \Phi_{\mathcal{N}}$. Therefore $\mathcal{N}(a, b) = (a^x, b^x)$, and correspondingly we obtain $\mathcal{N}(A, B) = (A^X, B^X)$, so that the pairs $(A, B)$ and $(A^X, B^X)$ are Nielsen equivalent.
\end{proof}

\par Nielsen moves also satisfy an important invariant property. In order to see that, define an extended conjugacy class of a group $G$ to be any subset that is closed under conjugation and inverses. For a pair of elements $(A, B)$ in $G$, its extended conjugacy class is defined to be the extended conjugacy class of the commutator $[A,B]$. Now, looking at the commutators induced by the Nielsen moves $r, s,$ and $t$, we have
\[[B, A] = [A^{-1}, AB] = [A, B]^{-1}, \text{ } [A^{-1}, B] = ([A, B]^{-1})^{A^{-1}}.\]
As any Nielsen move is generated by $r, s,$ and $t$, it follows that two pairs $(A, B)$ and $(A', B')$ are Nielsen equivalent only if they have the same extended conjugacy class.

\par McCollough and Wanderley conjectured in \cite{mccullough2013nielsen} that for $\SL(\Fp)$, the extended conjugacy class of a generating pair is a complete Nielsen invariant. This conjecture is referred to as Q-classification and is closely related to the problem of strong approximation. This is studied in later sections of this paper, after developing a few technical tools.

\section{{\it Technical results:} Geometry \& Subgroups} \label{technical}

\subsection{Geometry of $\Fp$} \hfill

\par A useful geometric notion to have in mind when working with Markoff triples is that of hyperbolic, parabolic, and elliptic elements of $\Fp$. For $t \in \Fp$, we say that $t$ is hyperbolic when $t^2 - 4$ is a non-zero square. Similarly, we say that $t$ is parabolic when $t= \pm 2$ and elliptic when $t^2 - 4$ is not a square. In other words, $t$ is hyperbolic, parabolic, and elliptic when \[\left(\frac{t^2 - 4}{p}\right) = 1, 0,\text{ and }-1.\]

\par This classification allows us to write elements of $\Fp$ in a useful way:

\begin{lemma}\label{geometry}
Let $t \in \Fp$. Then, $t$ is hyperbolic if and only if $t = x + x^{-1}$ for some $x \in \Fp^{\times} \setminus \{\pm 1\}$. Moreover, $t$ is elliptic if and only if $t = y + y^{-1}$ for some $y \in \Fpp^{\times} \setminus \{\pm 1\}$ satisfying $y^{p+1} = 1$.
\end{lemma}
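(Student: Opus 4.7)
The plan is to reduce both equivalences to the quadratic $X^2 - tX + 1 = 0$. Indeed, for $x \ne 0$ the equation $t = x + x^{-1}$ is equivalent to $x^2 - tx + 1 = 0$, and the discriminant of this quadratic is precisely $t^2 - 4$. So the existence of a representation $t = x + x^{-1}$ in a given field reduces to whether $t^2 - 4$ is a square there, while the two roots always have product $1$.

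For the hyperbolic equivalence I would argue as follows. If $t^2 - 4$ is a nonzero square in $\Fp$, the quadratic splits into two distinct roots $x, x^{-1}$ in $\Fp^{\times}$; and neither root can equal $\pm 1$, since that would force $t = \pm 2$ and hence $t^2 - 4 = 0$. Conversely, if $t = x + x^{-1}$ for some $x \in \Fp^{\times} \setminus \{\pm 1\}$, then the roots $x$ and $x^{-1}$ of $X^2 - tX + 1$ are distinct and both in $\Fp$, so $t^2 - 4$ is a nonzero square.

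For the elliptic equivalence, the key idea is the Galois action of $\Fpp/\Fp$. Suppose $t^2 - 4$ is a non-square in $\Fp$. Then $X^2 - tX + 1$ is irreducible over $\Fp$ but splits in $\Fpp$ into two conjugate roots $y, y'$ with $y y' = 1$. Since Frobenius $\sigma: z \mapsto z^p$ generates $\mathrm{Gal}(\Fpp/\Fp)$ and must interchange the two roots, $y' = y^p$, whence $y^{p+1} = y y' = 1$. Neither root equals $\pm 1$, as $\pm 1 \in \Fp$. Conversely, given $y \in \Fpp^{\times} \setminus \{\pm 1\}$ with $y^{p+1} = 1$, set $t = y + y^{-1}$. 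Then $y^{-1} = y^p$, so $t = y + y^p$ is Frobenius-fixed and therefore lies in $\Fp$. A quick check rules out $y \in \Fp$: in that case $y^{p+1} = y^2 = 1$ would force $y = \pm 1$. Hence $y$ generates $\Fpp$ over $\Fp$, its minimal polynomial is $X^2 - tX + 1$, and its discriminant $t^2 - 4$ is a non-square, so $t$ is elliptic.

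I do not anticipate any genuine obstacle here; the only non-cosmetic point is the identification of the Galois conjugate of $y$ as $y^p$, which is immediate from the fact that Frobenius generates the Galois group of the quadratic extension $\Fpp/\Fp$. Everything else is routine manipulation of the quadratic $X^2 - tX + 1$ and its discriminant.
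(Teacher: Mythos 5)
Your proof is correct and follows essentially the same route as the paper: both reduce the claim to the quadratic $X^2 - tX + 1$ whose discriminant is $t^2 - 4$, with the hyperbolic case coming from splitting in $\Fp$ and the elliptic case from splitting in $\Fpp$. The only cosmetic difference is that you invoke Frobenius to get $y' = y^p$, whereas the paper performs the same step by directly computing $a^p = -a$ for $a = \sqrt{t^2-4} \in \Fpp \setminus \Fp$.
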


\begin{proof}
\par Assume first that $t$ is hyperbolic. Let $t^2 - 4 = a^2$, $a \in \Fp^{\times}$, and write $x = (t+a)/2$. Notice that $x \ne \pm 1$, otherwise $x = x^{-1}$ and $a = 0$. Then $x^{-1} = (t-a)/2$ and $t = x + x^{-1}$, $x \in \Fp^{\times} \setminus {\pm 1}$. Conversely, if $t = x + x^{-1}$, $x \ne \pm 1$, then $t^2 - 4 = (x - x^{-1})^2 \ne 0$ is in fact a non-zero square.

\par In the elliptic case, write $t^2 - 4 = a^2$, $a \in \Fpp^{\times} \setminus \{\pm 1\}$. As $a^2 \in \Fp$ but $a \not \in \Fp$, we have that $a^{2p} = a^2$ and $a^p = -a$. Write $y = (t + a)/2$, so that $y^{-1} = (t-a)/2$ and $t = y + y^{-1}$. Notice that $y \ne \pm 1$, otherwise $y = y^{-1}$ and $a = 0$. Moreover, $y^p = (t^p + a^p)/2^p = (t - a)/2 = y^{-1}$, hence $y^{p+1} = 1$. Conversely, if $t = y+ y^{-1}$, $y \ne \pm 1$, then $t^2 - 4 = (y - y^{-1})^2$. It is clear that $y - y^{-1}$ is not in $\Fp$, otherwise $y \in \Fp$ but $y^{p+1} = 1$, $y \ne \pm 1$. Therefore $t^2 - 4$ is not a square in $\Fp$.
\end{proof}

\subsection{Normal Pairs in $\SL(\Fp)\times \SL(\Fp)$} \hfill

\par These geometric notions naturally extend to matrices in $\SL(\Fp)$. We say that a matrix $X$ in $\SL(\Fp)$ is hyperbolic, parabolic, and elliptic depending on whether $\tr(X)$ is hyperbolic, parabolic, and elliptic. Notice that each conjugacy class of $\SL(\Fp)$ has constant trace, but also most conjugacy classes contain all elements of a given trace.

\par In order to see this phenomenon, it is useful to consider an embedding $\EL(\Fp)$ of $\SL(\Fp)$ into $\SL(\Fpp)$ that reveals some of the structure of elliptic elements. This embedding will be called the elliptic embedding of $\SL(\Fp)$ and is presented in the following lemma:

\begin{lemma}\label{emb}
Let $\EL(\Fp)$ be the subgroup of $\SL(\Fpp)$ containing all matrices of the form
\[\begin{pmatrix} x & y \\ y^p & x^p \end{pmatrix}, \text{ } x^{p+1} - y^{p+1} = 1.\]
Then, there is an isomorphism $\varphi: \SL(\Fp) \to \EL(\Fp), X \mapsto PXP^{-1}$, with $P \in \GL(\Fpp)$.

\end{lemma}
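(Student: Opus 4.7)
The plan is to guess the correct $P$ by recognizing $\EL(\Fp)$ as a Frobenius-twisted form of $\SL(\Fp)$. Let $\sigma: \Fpp \to \Fpp$ denote the Frobenius $u \mapsto u^p$, extended entrywise to matrices, and set $J = \begin{pmatrix} 0 & 1 \\ 1 & 0 \end{pmatrix}$. The defining condition $x^{p+1} - y^{p+1} = 1$ on $Y = \begin{pmatrix} x & y \\ y^p & x^p \end{pmatrix}$ is exactly $\det Y = 1$, and the matrix shape itself is equivalent to $\sigma(Y) = JYJ$. Thus $\EL(\Fp)$ is the fixed subgroup of $\SL(\Fpp)$ under $Y \mapsto J\sigma(Y)J$, while $\SL(\Fp)$ is the fixed subgroup under plain $\sigma$. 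A conjugation $X \mapsto PXP^{-1}$ will intertwine these two descents precisely when $P^{-1}\sigma(P)$ is a scalar multiple of $J$.

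Guided by this, I would take any $\alpha \in \Fpp \setminus \Fp$ and set $P = \begin{pmatrix} 1 & \alpha \\ 1 & \alpha^p \end{pmatrix}$, which lies in $\GL(\Fpp)$ because $\alpha \ne \alpha^p$. A one-line computation gives $\sigma(P) = JP$. Define $\varphi(X) = PXP^{-1}$; this is automatically an injective group homomorphism $\SL(\Fp) \to \SL(\Fpp)$, and for $X \in \SL(\Fp)$ one has
\[ \sigma(\varphi(X)) = \sigma(P)\,\sigma(X)\,\sigma(P)^{-1} = JP \cdot X \cdot P^{-1}J = J\,\varphi(X)\,J, \]
so $\varphi(X)$ satisfies the shape condition; combined with $\det \varphi(X) = 1$, this places $\varphi(X)$ in $\EL(\Fp)$.

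For surjectivity I would invert the construction. Given $Y \in \EL(\Fp)$, set $X = P^{-1}YP$. Then $\det X = \det Y = x^{p+1} - y^{p+1} = 1$, and the same Frobenius calculation run backwards,
\[ \sigma(X) = \sigma(P)^{-1}\,\sigma(Y)\,\sigma(P) = P^{-1}J \cdot JYJ \cdot JP = P^{-1}YP = X, \]
using $J^2 = I$, shows that $X$ has entries in $\Fp$. Hence $X \in \SL(\Fp)$ and $\varphi(X) = Y$, completing the bijection.

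The only real obstacle is finding $P$: once one views $\EL(\Fp)$ as the unitary-style descent of $\SL(\Fpp)$ associated to the sesquilinear pairing encoded by $J$, the Vandermonde-type matrix built from $\{\alpha, \alpha^p\}$ is the natural intertwiner, and every verification collapses to the single identity $\sigma(P) = JP$.
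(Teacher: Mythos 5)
Your proof is correct, and it uses the very same conjugating matrix as the paper: the paper takes $g \in \Fpp \setminus \Fp$ and $P = \begin{pmatrix} 1 & g \\ 1 & g^p \end{pmatrix}$, exactly your Vandermonde-type $P$. Where you diverge is in the verification. The paper simply asserts that $PXP^{-1} \in \EL(\Fp)$ for $X \in \SL(\Fp)$ and then deduces surjectivity from the cardinality count $|\SL(\Fp)| = |\EL(\Fp)| = p(p^2-1)$, which implicitly requires counting the solutions of $x^{p+1} - y^{p+1} = 1$ in $\Fpp$. You instead package the shape condition on elements of $\EL(\Fp)$ as the Galois-descent identity $\sigma(Y) = JYJ$ and reduce everything to the single relation $\sigma(P) = JP$; all your computations check out (in particular $JYJ = \begin{pmatrix} d & c \\ b & a\end{pmatrix}$ does recover the stated shape, and $J^2 = I$ makes the surjectivity calculation close up). This buys you an explicit inverse $Y \mapsto P^{-1}YP$ landing in $\SL(\Fp)$, so you never need the order of $\EL(\Fp)$, and it makes transparent both why $\varphi(X)$ has the required form (a point the paper leaves to the reader) and why $\EL(\Fp)$ is a subgroup at all. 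The paper's counting argument is shorter on the page but leaves more unverified; yours is self-contained and conceptually cleaner.
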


\begin{proof}
Let $g \in \Fpp \setminus \Fp$, so that $(g^p-g)^p = -(g^p - g) \ne 0$, and let
\[P = \begin{pmatrix} 1 & g \\ 1 & g^p \end{pmatrix} \in \GL(\Fpp).\]
Then, for $X \in \SL(\Fp)$, one has $PXP^{-1} \in \EL(\Fp)$. Moreover, \[|\SL(\Fp)| = |\EL (\Fp)| = p(p^2-1).\]
Therefore $X \mapsto PXP^{-1}$ induces an isomorphism $\SL(\Fp) \cong \EL(\Fp)$.
\end{proof}

\par Denote by $\phi = \varphi^{-1}: \EL(\Fp) \to \SL(\Fp)$ the inverse isomorphism. In order to see the relationship between trace and conjugacy classes in $\SL(\Fp)$, we refer to the following classical result (c.f. \cite{mccullough2013nielsen}):

\begin{prop}\label{conjug}
Let $A, B \in \SL(\Fp)$ be such that $\tr(A) = \tr(B) = t$. Then, if $t$ is non-parabolic, $A$ and $B$ are conjugate. Moreover, if $t$ is parabolic, then there are two non-trivial conjugacy classes of trace $t$. 
\end{prop}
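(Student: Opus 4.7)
The plan is to work directly with the characteristic polynomial $\chi_A(\lambda) = \lambda^2 - t\lambda + 1$, whose discriminant is $t^2 - 4$. The proof then splits along the trichotomy hyperbolic/elliptic/parabolic according to whether this discriminant is a non-zero square, a non-square, or zero.

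First I would handle the non-parabolic case. Since $t^2 - 4 \neq 0$, the matrix $A$ has distinct eigenvalues in $\overline{\Fp}$ and is therefore semisimple with minimal polynomial equal to $\chi_A$. Standard rational canonical form then implies that any two matrices in $\GL_2(\Fp)$ sharing this characteristic polynomial are $\GL_2(\Fp)$-conjugate; in particular $B = R A R^{-1}$ for some $R \in \GL_2(\Fp)$. To upgrade this to an $\SL(\Fp)$-conjugacy, I would compute the centralizer $Z_{\GL_2(\Fp)}(A)$, which equals the unit group of the commutative $\Fp$-algebra $\Fp[A] \cong \Fp[\lambda]/(\lambda^2 - t\lambda + 1)$. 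In the hyperbolic case this algebra is $\Fp \times \Fp$ with unit group $\Fp^\times \times \Fp^\times$ and determinant acting as $(a,b)\mapsto ab$; in the elliptic case it is $\Fpp$ with unit group $\Fpp^\times$ and determinant acting as the norm $N_{\Fpp/\Fp}$. In both cases the determinant map is surjective onto $\Fp^\times$, so I can multiply $R$ by a suitable element of the centralizer to obtain an $\SL(\Fp)$-conjugator. The elliptic embedding of Lemma \ref{emb} is a convenient technical aid here but is not strictly needed, because the algebraic argument is uniform.

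For the parabolic case $t = \pm 2$, the minimal polynomial divides $(\lambda \mp 1)^2$. Either $A = \pm I$ (central, giving the trivial class excluded by the statement) or $N = A \mp I$ is a non-zero nilpotent of rank one. In the latter case $A$ is $\GL_2(\Fp)$-conjugate to $\pm\begin{pmatrix}1 & 1 \\ 0 & 1\end{pmatrix}$ by choosing a basis starting with a vector not in $\ker N$. So over $\GL_2(\Fp)$ there is only a single non-trivial class of trace $t$. However, the centralizer of $\pm\begin{pmatrix}1&1\\0&1\end{pmatrix}$ in $\GL_2(\Fp)$ is the set of upper-triangular matrices with equal diagonal entries, on which the determinant map is $a \mapsto a^2$, with image the squares $(\Fp^\times)^2 \subset \Fp^\times$ of index two. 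Since the obstruction to converting a $\GL_2(\Fp)$-conjugacy into an $\SL(\Fp)$-conjugacy lies in $\Fp^\times/\det Z_{\GL_2(\Fp)}(A)$, the single $\GL$-class breaks into exactly two $\SL(\Fp)$-classes, represented concretely by $\pm\begin{pmatrix}1&1\\0&1\end{pmatrix}$ and $\pm\begin{pmatrix}1&\epsilon\\0&1\end{pmatrix}$ for $\epsilon$ a fixed non-square in $\Fp^\times$.

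The principal obstacle I expect is the passage from $\GL_2(\Fp)$-conjugacy to $\SL(\Fp)$-conjugacy, which is precisely where the parabolic case behaves differently from the non-parabolic ones. Keeping the centralizer-determinant computation honest in all three cases, and verifying that the two candidate parabolic representatives are genuinely non-conjugate in $\SL(\Fp)$ (as opposed to just non-conjugate by scalar multiples), will require some care; the cleanest approach is to argue via the short exact sequence $1 \to \SL(\Fp) \to \GL_2(\Fp) \xrightarrow{\det} \Fp^\times \to 1$ and the orbit-splitting formula for the action of $\GL_2(\Fp)/\SL(\Fp) \cong \Fp^\times$ on the $\GL$-class of $A$.
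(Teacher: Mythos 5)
Your proof is correct and complete. Note that the paper does not actually prove Proposition \ref{conjug}; it is stated as a classical fact with a citation, so there is no in-text argument to compare against. Your route --- rational canonical form to get $\GL_2(\Fp)$-conjugacy from equality of characteristic polynomials, followed by the centralizer-determinant computation $\Fp^\times/\det Z_{\GL_2(\Fp)}(A)$ to count how the $\GL$-class splits into $\SL$-classes --- is the standard one, and all three centralizer computations are right: $\Fp^\times\times\Fp^\times$ with surjective product map in the split case, $\Fpp^\times$ with surjective norm in the non-split case, and $\{aI+bN\}$ with determinant image $(\Fp^\times)^2$ of index two in the unipotent case. Your closing remark about verifying non-conjugacy of the two parabolic representatives is already handled by the orbit-splitting formula itself (the set of determinants of conjugators between them is the nontrivial coset $\epsilon\,(\Fp^\times)^2$, which misses $1$), so no additional check is needed. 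One stylistic observation: you are right that the elliptic embedding of Lemma \ref{emb} is not needed here, but the paper introduces it anyway because it is used heavily downstream (normal pairs in $\EL(\Fp)$, Lemma \ref{normal}, Theorem \ref{towers}), so your uniform algebraic treatment is a genuine simplification only locally.
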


\par This implies that conjugacy classes coincide with trace classes for every non-parabolic trace $t$. We can now choose representatives for these classes. For $t = u+u^{-1}$ hyperbolic, $u \in \Fp^{\times} \setminus \{\pm 1\}$, define \[D_u = \begin{pmatrix} u & 0 \\ 0 & u^{-1}\end{pmatrix} \in \SL(\Fp).\]

\par In the elliptic case, consider the elliptic embedding $\EL(\Fp)$ induced by the conjugation map $\varphi$. As trace is preserved under conjugation, it is clear that Proposition $\ref{conjug}$ also applies to $\EL(\Fp)$. For $t = v + v^{-1}$ elliptic, $v \in \Fpp^{\times} \setminus \{\pm 1\}$, $v^{p+1} = 1$, define \[D_v = \begin{pmatrix} v & 0 \\ 0 & v^{-1}\end{pmatrix} \in \EL(\Fp).\]

\par Hence the representative of a hyperbolic conjugacy class is taken to be $D_u$, an element of $\SL(\Fp)$. Similarly, the representative of an elliptic conjugacy class is taken to be $D_v$, an element of the elliptic embedding $\EL(\Fp)$. This is summarized as follows:

\begin{cor}\label{hyperelliptic} Let $X \in \SL(\Fp)$ and let $\tr(X) = t$. Then, when $t = u+ u^{-1}$ is hyperbolic, $X$ is conjugate to $D_u$ in $\SL(\Fp)$. Moreover, when $t = v + v^{-1}$ is elliptic, $X$ is conjugate to $D_v$ in $\EL(\Fp)$.
\end{cor}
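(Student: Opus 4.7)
The plan is to deduce both assertions as direct corollaries of Proposition \ref{conjug}, using the elliptic embedding from Lemma \ref{emb} to handle the elliptic case. The key observation is that in each case the target matrix $D_u$ or $D_v$ has the correct trace $t$ by construction, and that $t$ is non-parabolic in both settings since the hypotheses $u, v \ne \pm 1$ from Lemma \ref{geometry} rule out $t = \pm 2$, so Proposition \ref{conjug} is applicable.

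For the hyperbolic case, I would compute $\tr(D_u) = u + u^{-1} = t = \tr(X)$ and note that both $X$ and $D_u$ lie in $\SL(\Fp)$. Proposition \ref{conjug} then immediately gives conjugacy in $\SL(\Fp)$.

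The elliptic case requires one extra step, because $D_v$ is not a matrix over $\Fp$; it lives in $\EL(\Fp) \subset \SL(\Fpp)$. I would first transfer $X$ into the elliptic embedding via the isomorphism $\varphi: \SL(\Fp) \to \EL(\Fp)$ from Lemma \ref{emb}. Since $\varphi$ is implemented by conjugation by some $P \in \GL(\Fpp)$, one has $\tr(\varphi(X)) = \tr(X) = t = v + v^{-1} = \tr(D_v)$. Because $\varphi$ is a group isomorphism, Proposition \ref{conjug} transfers verbatim from $\SL(\Fp)$ to $\EL(\Fp)$, yielding that $\varphi(X)$ and $D_v$ are conjugate inside $\EL(\Fp)$, which is exactly the stated claim.

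The only point that deserves care, rather than a genuine obstacle, is making sure the conjugation takes place inside the correct group: in the elliptic case one needs conjugacy in $\EL(\Fp)$ specifically, not merely in the ambient $\SL(\Fpp)$. This is precisely what is gained by reducing to Proposition \ref{conjug} through the isomorphism $\varphi$ rather than working directly with matrices over $\Fpp$, where two elements of the same trace are also conjugate but possibly via a matrix outside $\EL(\Fp)$.
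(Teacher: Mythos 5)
Your proposal is correct and matches the paper's intended argument: the corollary is presented there as a direct consequence of Proposition \ref{conjug}, with the remark that "as trace is preserved under conjugation, Proposition \ref{conjug} also applies to $\EL(\Fp)$" — exactly your transfer via $\varphi$. Your added care about conjugacy taking place in $\EL(\Fp)$ rather than merely in $\SL(\Fpp)$ is a worthwhile point the paper leaves implicit.
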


\par The conjugation between the elements $X$ of $\SL(\Fp)$ and $D_v$ of $\EL(\Fp)$ is taken in the obvious sense, namely via the isomorphism $\SL(\Fp) \cong \EL(\Fp)$ induced by the conjugation map $\varphi$. We use this language to simplify notation whenever the context is clear. Corollary \ref{hyperelliptic} provides us with a convenient representation for pair of matrices $(A, B)$. In fact, we say that a pair $(A, B)$ is normal when
\[A = \begin{pmatrix} x & 0 \\ 0 & x^{-1}\end{pmatrix}, \text{ } B = \begin{pmatrix} a & b \\ c & d\end{pmatrix},\]
either as elements of $\SL(\Fp)$ or as elements of $\EL(\Fp)$. Corollary \ref{hyperelliptic} shows that any pair $(A, B)$ in $\SL(\Fp)$ with $A$ non-parabolic is conjugate to a normal pair $(D_x, C)$, which lies in $\SL(\Fp)$ when $A$ is hyperbolic and in $\EL(\Fp)$ when $A$ is elliptic.

\par Normal pairs provides us with a straightforward connection between the two matrices and the trace of its commutator. In particular, notice that this trace does not depend on the ambient space $\SL(\Fp)$ or $\EL(\Fp)$, as these are isomorphic via conjugation. Then, given
\[A = \begin{pmatrix} x & 0 \\ 0 & x^{-1}\end{pmatrix}, \text{ } B = \begin{pmatrix} a & b \\ c & d\end{pmatrix}, \text{ we have } [A, B] = \begin{pmatrix} ad-bcx^2 & ab(x^2-1) \\ cd(x^{-2}-1) & ad - bcx^{-2}\end{pmatrix},\]
hence $\tr([A, B]) = 2 - bc(\tr(A)^2 - 4)$. We have derived the following result:

\begin{lemma}\label{normal}
Let $(A, B)$ be a normal pair and let $k = \tr([A, B])$. Then, \[2-k = bc(\tr(A)^2 - 4).\]
\end{lemma}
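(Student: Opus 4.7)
The plan is to prove the identity by direct matrix computation, exploiting the diagonal structure of $A$. This is a bookkeeping lemma rather than a conceptual statement, so I do not expect any real obstacle; the main thing is to keep track of signs and the relation $\det B = 1$ correctly, and to notice that the final expression repackages naturally in terms of $\tr(A)^2 - 4$.

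First, since $A = \mathrm{diag}(x, x^{-1})$, conjugation by $A$ scales the $(1,2)$ and $(2,1)$ entries of $B$ by $x^2$ and $x^{-2}$ respectively, so
\[ABA^{-1} = \begin{pmatrix} a & x^2 b \\ x^{-2} c & d \end{pmatrix}.\]
This formula is purely algebraic, and hence holds whether the pair $(A,B)$ is viewed inside $\SL(\Fp)$ or inside $\EL(\Fp)$ via the isomorphism $\varphi$ of Lemma \ref{emb}; the trace of the commutator is the same in either setting.

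Next, I would multiply on the right by $B^{-1} = \begin{pmatrix} d & -b \\ -c & a \end{pmatrix}$, using $\det B = ad - bc = 1$, and read off the diagonal of the resulting product. This reproduces the commutator displayed just before the lemma statement, giving the two diagonal entries $ad - bc x^2$ and $ad - bc x^{-2}$, and hence
\[\tr([A,B]) = 2ad - bc\bigl(x^2 + x^{-2}\bigr).\]

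Finally, I would substitute $2ad = 2 + 2bc$ (from $\det B = 1$) to obtain
\[k = \tr([A,B]) = 2 - bc\bigl(x^2 + x^{-2} - 2\bigr),\]
and recognize that $x^2 + x^{-2} - 2 = (x + x^{-1})^2 - 4 = \tr(A)^2 - 4$. Rearranging yields $2 - k = bc(\tr(A)^2 - 4)$, as desired.
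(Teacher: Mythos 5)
Your proof is correct and follows essentially the same route as the paper: a direct computation of $[A,B]$ exploiting the diagonal form of $A$, reading off the trace as $2ad - bc(x^2+x^{-2})$, and repackaging via $\det B = 1$ and $\tr(A)^2 - 4 = x^2 + x^{-2} - 2$. The paper simply displays the full commutator matrix at once rather than stepping through $ABA^{-1}$ and $B^{-1}$, but the computation and the final simplification are identical.
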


\subsection{Subgroups of $\PSL(\Fp)$}\label{subgroup_class} \hfill

\par This subsection presents a classification of subgroups of $\PSL(\Fp)$ which will be used to study subgroups of $\SL(\Fp)$ through the canonical map $\rho: \SL(\Fp) \to \PSL(\Fp), X \mapsto \pm X$. We will often talk about the subgroup generated in $\PSL(\Fp)$ by a pair $(A, B)$ in $\SL(\Fp)$, which obviously means the subgroup generated by the images $(\rho(A), \rho(B))$. This abuse of language helps us simplify notation.

\par In this classification, each subgroup of $\PSL(\Fp)$ can fall into three different categories: affine, projective, and exceptional. These three categories are nearly disjoint, with a single intersection between affine and exceptional subgroups. We will return to this point later. For now, we introduce the classification: 

\begin{enumerate}
    \item[1.] {\it Affine Subgroups.}
 \par Let $H_1$ be the subgroup of $\SL(\Fp)$ consisting of lower triangular matrices: \[H_1 = \left \{ \begin{pmatrix} a & 0 \\ c & a^{-1} \end{pmatrix} : a \in \Fp^{\times}, \text{ }c \in \Fp \right\},\]
and let $A_1 = \rho(H_1)$ be the corresponding image in $\PSL(\Fp)$. Similarly, denote by $H_2$ the subgroup of $\EL(\Fp)$ consisting of diagonal matrices:
\[H_2 = \left \{ \begin{pmatrix} x & 0 \\ 0 & x^p \end{pmatrix} : x \in \Fp,\text{ } x^{p+1} = 1 \right\},\]
and let $A_2 = \rho \circ \phi(H_2)$. We will say that $A$ is an affine subgroup of $\PSL(\Fp)$ whenever $A$ is conjugate to a subgroup of either $A_1$ or $A_2$. \\

\item[2.] {\it Projective Subgroups.}
\par The only projective subgroup is $\PSL(\Fp)$ itself. It is worth remarking that this class becomes more complex when we study subgroups of $\PSL(\mathbb{F}_{p^n})$ for higher values of $n$. For this reason, we acknowledge it as a separate class. \\

\item[3.] {\it Exceptional Subgroups.}
\par This class is derived from triangle groups. The triangle group $(l, m, n)$ is defined as the abstract group with presentation
\[\langle a, b, c \text{ }|\text{ } a^l = b^m = c^n = abc = 1\rangle.\]
It is known that a triangle group $(l, m, n)$ is finite if and only if $\frac{1}{l} + \frac{1}{m} + \frac{1}{n} > 1$ (c.f. \cite{coxeter2013generators}). This gives us a small list of finite triangle groups which we show in Table \ref{exceptional_groups} below. A subgroup of $\PSL(\Fp)$ will be called exceptional if it is isomorphic to one of these finite triangle groups.
\begin{center}
\begin{table}[ht]
\caption{Exceptional Subgroups}
\begin{tabular}{ c  c }
 $(l, m, n)$ & Group  \\ [0.5ex]
 \hline
 $(2, 2, n)$ & Dihedral ($D_n$) \\
 $(2, 3, 3)$ & Tetahedral ($A_4$) \\
 $(2, 3, 4)$ & Octahedral ($S_4$) \\
 $(2, 3, 5)$ & Icosahedral ($A_5$)    
\end{tabular}
\label{exceptional_groups}
\end{table}
\end{center}
\end{enumerate}

\par For our purposes, it is important to understand when each of the exceptional subgroups appears as a subgroup of $\PSL(\Fp)$. When this happens, we say this exceptional subgroup is admissible in $\PSL(\Fp)$. Whether or not an exceptional group is admissible in $\PSL(\Fp)$ depend on local conditions on the prime $p$. This is given by the following important result (c.f. \cite{suzuki}):

\begin{thm}\label{admissibles}
Let $p$ be an odd prime and $\PSL(\Fp)$ the projective special linear group. Then,
\begin{itemize}
    \item[(i)] $D_n$ is admissible in $\PSL(\Fp)$ if and only if $n$ divides $p \pm 1$.
    \item[(ii)] $A_4$ is admissible in $\PSL(\Fp)$.
    \item[(iii)] $S_4$ is admissible in $\PSL(\Fp)$ if and only if $p \equiv \pm 1 \pmod{8}$.
    \item[(iv)] $A_5$ is admissible in $\PSL(\Fp)$ if and only if $p \equiv \pm 1 \pmod{5}$ or $p = 5$.
\end{itemize}
\end{thm}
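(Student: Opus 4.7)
The plan is to prove necessity and sufficiency separately for each item, with a common first step: a description of the possible orders of elements of $\PSL(\Fp)$. Using Corollary~\ref{hyperelliptic}, each non-parabolic element of $\SL(\Fp)$ is conjugate, possibly inside the elliptic embedding, to a diagonal matrix $D_x$ whose eigenvalue $x$ lies either in $\Fp^{\times}$ with order dividing $p-1$, or in $\Fpp^{\times}$ with order dividing $p+1$. Passing to $\PSL(\Fp)$ by quotienting by $\pm I$, the non-trivial orders of non-parabolic elements are exactly the divisors of $(p-1)/2$ together with the divisors of $(p+1)/2$, while parabolic elements contribute only order $p$.

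For necessity in (i)--(iv), I would read off the required divisibility conditions from the orders of elements that each exceptional subgroup must contain. The cyclic subgroup of order $n$ inside $D_n$ forces $n$ to divide $(p-1)/2$ or $(p+1)/2$, which the author records in the looser form $n \mid p \pm 1$. The involutions and order-$3$ elements needed for $A_4$ are always present in $\PSL(\Fp)$ for odd $p$ (with the parabolic case covering $p = 3$), so no congruence is required. The order-$4$ element inside $S_4$ forces $p \equiv \pm 1 \pmod{8}$, and the order-$5$ element inside $A_5$ forces $p \equiv \pm 1 \pmod{5}$, with the boundary case $p = 5$ absorbed into the direct isomorphism $\PSL(\Fp) \cong A_5$.

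For sufficiency I would realize each exceptional subgroup through its triangle-group presentation $\langle a, b \mid a^l = b^m = (ab)^n = 1\rangle$. The congruence hypothesis on $p$ guarantees an element $A$ of order $l$ and an element $B_0$ of order $m$ in $\PSL(\Fp)$, drawn from the split or non-split torus as appropriate. As $B$ ranges over the conjugacy class of $B_0$, the trace $\tr(AB)$ varies over essentially all of $\Fp$, so one can choose $B$ so that $\tr(AB)$ equals one of the distinguished trace values at which an element has order exactly $n$. This is enough to force $(AB)^n = 1$ in $\PSL(\Fp)$.

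The main obstacle is the last step of sufficiency: verifying that the pair $(A, B)$ just constructed generates exactly the intended exceptional subgroup, rather than a proper subgroup failing the full set of relations or a strict enlargement. Here the subgroup classification of Section~\ref{subgroup_class} becomes indispensable: one argues that $B$ may be chosen outside the Borel-type subgroup containing $A$ to avoid an affine conclusion, and one rules out the projective case $\PSL(\Fp)$ by order considerations, since the triangle groups in Table~\ref{exceptional_groups} have absolutely bounded order while $|\PSL(\Fp)|$ grows like $p^3$. With the other alternatives eliminated, $\langle A, B\rangle$ must be an exceptional subgroup of the correct type $(l, m, n)$.
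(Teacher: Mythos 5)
The paper itself gives no proof of Theorem~\ref{admissibles}; it is quoted from Dickson's classification of subgroups of $\PSL(\Fp)$ via \cite{suzuki}, so there is no in-paper argument to compare against and your proposal must stand on its own. Its decisive gap is the final step of sufficiency. Having produced $A$, $B$ with $A$, $B$, $AB$ of orders $l$, $m$, $n$, you propose to identify $\langle A,B\rangle$ by eliminating the affine and projective alternatives from Section~\ref{subgroup_class}. But the assertion that every subgroup of $\PSL(\Fp)$ is affine, projective, or exceptional \emph{is} Dickson's classification --- essentially the theorem you are proving, and one the paper also only quotes --- so invoking it here is circular. Even granting the trichotomy, eliminating the affine and projective cases only shows $\langle A,B\rangle$ is \emph{some} group on the exceptional list, not the one of type $(l,m,n)$: a $(2,3,3)$-pair inside an icosahedral subgroup generates a tetrahedral subgroup, for instance, so order data alone does not pin down the answer without further case analysis. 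The repair is the tool you name but then abandon: the relations $A^l=B^m=(AB)^n=1$ exhibit $\langle A,B\rangle$ as a quotient of the finite triangle group $\Delta(l,m,n)$, and an inspection of the normal-subgroup lattices of $D_n$, $A_4$, $S_4$, $A_5$ shows that no proper quotient preserves the exact orders of $a$, $b$, $ab$; hence the quotient map is an isomorphism, with no appeal to the subgroup classification. You should also justify ``$\tr(AB)$ varies over essentially all of $\Fp$'' --- the clean route is the non-emptiness of the towers $\E(x,y,z)$ together with Proposition~\ref{conjug}, which lets you prescribe all three traces and then conjugate the first coordinate onto your chosen $A$; note this uses material the paper develops only in Section~\ref{macbeath}.

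A second problem sits in item (i). Your necessity argument correctly extracts $n \mid (p-1)/2$ or $n \mid (p+1)/2$ from the orders of the split and non-split tori, which is strictly stronger than the stated condition, and the ``if'' direction as literally written is false: for $p=7$ and $n=8$ one has $n \mid p+1$ yet $|D_8| = 16$ does not divide $|\text{PSL}_2(\mathbb{F}_7)| = 168$. Describing $n \mid p\pm 1$ as a ``looser form'' of what you actually proved conceals the fact that you cannot establish the stated biconditional because it does not hold with the paper's convention $|D_n| = 2n$; the correct statement is $n \mid (p\pm1)/2$ (up to the exceptional dihedral subgroups of the Borel), and a complete write-up should say so explicitly rather than proving one condition and asserting another.
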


\par In view of this classification, we say that a pair $(A, B)$ in $\SL(\Fp)$ has affine, projective, and exceptional group type according to the class of the subgroup generated in $\PSL(\Fp)$. As previously mentioned, these three categories are nearly disjoint. In fact, the only projective subgroup $\PSL(\Fp)$ is neither affine nor exceptional. Moreover, in Section \ref{main} we show that the only exceptional subgroup that is also affine is the trivial dihedral group $D_1$.

\par We remark that the local conditions on $p$ in Theorem \ref{admissibles} can be expressed in different ways. As our problem can be easily verified for small values of $p$, we assume throughout that $p > 5$. Then, the local conditions in $\text{(iii)}$ and $\text{(iv)}$ are equivalent to the Legendre identites $(\frac{2}{p}) = 1$ and $(\frac{5}{p}) = 1$, which we sometimes write simply as $\sqrt{2} \in \Fp$ and $\sqrt{5} \in \Fp$.

\section{{\it Macbeath:} The Structure of Towers} \label{macbeath}

\par This section closely follows the work of Macbeath, who studied generating pairs of the projective special liner group $\PSL(\Fp)$ in \cite{macbeath}. Our goal is to associate a type of subgroup of $\PSL(\Fp)$ to each triple in $\Fp^3$. Although we use a slightly different language, the essence of the proofs is still the same.

\subsection{The Trace Map} \hfill

\par Recall that over $\Fp$, the trace map $\Tr: \SL(\Fp) \times \SL(\Fp) \to \Fp^3$ is defined by
\[\Tr: (A, B) \mapsto (\tr(A), \tr(B), \tr(AB)).\]
As our notation suggests, the Nielsen moves $r, s,$ and $t$ previously defined correspond to the two coordinate permutations and the Vieta involution defined over $\Fp^3$. This correspondence is given by the trace map. In fact,
\[\Tr \circ r(A, B) = (\tr(B), \tr(A), \tr(BA)) = r \circ \Tr(A, B),\]
\[\Tr \circ s(A, B) = (\tr(A^{-1}), \tr(AB), \tr(B)) = s \circ \Tr (A, B),\]
\[\Tr \circ t(A, B) = (\tr(A^{-1}), \tr(B), \tr(A^{-1}B)) = t \circ \Tr(A, B).\]

\par In summary, upon identifying the maps $r, s,$ and $t$ as previously defined, we have
\[ \Tr \circ \mathcal{N} = \mathcal{N} \circ \Tr\]
for every Nielsen move $\mathcal{N}$. For this reason, we think of the action generated by $r, s,$ and $t$ taking place in both spaces $\Fp^3$ and $\SL(\Fp) \times \SL(\Fp)$. The analytic structure of the former allows a detailed study of its orbits, while the algebraic structure of the latter allows us to make precise conjectures in group theoretical terms. When there is no risk of confusion of the ambient space, we refer to this action as the Nielsen action and to its orbits as Nielsen orbits. In $\Fp^3$, these orbits correspond to the orbits previously defined. In $\SL(\Fp) \times \SL(\Fp)$, they correspond to the Nielsen equivalence classes.

\par In order to study the correspondence between the spaces $\SL(\Fp) \times \SL(\Fp)$ and $\Fp^3$, we consider the fibers \[\E(x, y, z) = \{(A, B) \in \SL(\Fp) \times \SL(\Fp) : \Tr(A, B) = (x, y, z)\}.\]
\par As previously remarked, $\E(x, y, z)$ is closed under conjugation by $\GL(\Fp)$. In fact, if $(A, B) \in \E(x, y, z)$, then $(A^X, B^X) \in \E(x, y, z)$ for every $X \in \GL(\Fp)$. We will refer to $\E(x, y, z)$ as the tower over $(x, y, z) \in \Fp^3$. In fact, towers can never be empty:

\begin{thm} For every $(x, y, z) \in \Fp^3$, the tower $\E(x, y, z)$ is non-empty.
\end{thm}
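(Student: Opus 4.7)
The plan is to exhibit explicit $A,B \in \SL(\Fp)$ realizing $\Tr(A,B) = (x,y,z)$, splitting into two cases according to whether we can reduce to a non-elliptic first coordinate. Since $\Tr$ intertwines with the permutation Nielsen moves $r$ and $s$, proving existence of a lift for any permutation of $(x,y,z)$ proves it for $(x,y,z)$ itself; so whenever at least one coordinate is non-elliptic, I may assume WLOG that $x$ is.

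In this first case, I use Lemma \ref{geometry} together with the parabolic observation $\pm 2 = (\pm 1) + (\pm 1)^{-1}$ to write $x = \alpha + \alpha^{-1}$ for some $\alpha \in \Fp^{\times}$, and then take
\[A = \begin{pmatrix} \alpha & z - \alpha y \\ 0 & \alpha^{-1}\end{pmatrix}, \qquad B = \begin{pmatrix} y & -1 \\ 1 & 0\end{pmatrix}.\]
Both are clearly in $\SL(\Fp)$ with $\tr(A)=x$ and $\tr(B)=y$, and a direct multiplication gives $AB = \left(\begin{smallmatrix} z & -\alpha \\ \alpha^{-1} & 0 \end{smallmatrix}\right)$, whence $\tr(AB)=z$.

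The content of the lemma lies in the remaining case, where all three of $x,y,z$ are elliptic. No element of $\SL(\Fp)$ with elliptic trace diagonalizes over $\Fp$, so I would switch to the elliptic embedding $\EL(\Fp) \cong \SL(\Fp)$ of Lemma \ref{emb}, in which such elements do become diagonal. Writing $x = v + v^{-1}$ with $v \in \Fpp^{\times}\setminus\{\pm 1\}$, $v^{p+1}=1$, I set $A = D_v \in \EL(\Fp)$. For $B = \left(\begin{smallmatrix} a & b \\ b^p & a^p \end{smallmatrix}\right) \in \EL(\Fp)$ the trace conditions become the linear system $a + a^p = y$ and $va + v^{-1} a^p = z$ over $\Fpp$, whose unique solution is $a = (vz - y)/(v^2 - 1)$. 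A direct calculation using $v^p = v^{-1}$ verifies the Galois compatibility $a^p = y - a$, so that this value of $a$ is consistent with the $\EL(\Fp)$ shape and automatically produces $a^{p+1} \in \Fp$. The entry $b$ is then any element of $\Fpp$ with $b^{p+1} = a^{p+1} - 1$, which exists since the norm $\Fpp^{\times}\to \Fp^{\times}$ is surjective (and $b = 0$ handles the vanishing case). Pulling $(A,B)$ back through $\phi : \EL(\Fp) \to \SL(\Fp)$ finishes the proof.

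The main obstacle is the all-elliptic case: one cannot work directly inside $\SL(\Fp)$ because the natural diagonal normal form for $A$ lives only over $\Fpp$. The elliptic embedding resolves this, but one must check two nontrivial facts, namely that the Frobenius-compatible solution $a$ to the two trace equations exists over $\Fpp$, and that the resulting norm $a^{p+1}-1$ is attained by some $b \in \Fpp$. Both hinge on the identity $v^{p+1}=1$ and the surjectivity of the norm map.
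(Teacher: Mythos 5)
Your proof is correct, but it takes a genuinely different route from the paper's. The paper fixes the single normal form $A = \left(\begin{smallmatrix} 0 & 1 \\ -1 & x\end{smallmatrix}\right)$ for all $(x,y,z)$ and reduces the existence of $B$ to finding a point with $t \ne 0$ on the projective conic $Q_{xyz}(t,u,v)=0$ over $\Fp$; this is settled by the standard facts that such a conic is non-empty and that a line through a rational point meets it again rationally, with the degenerate (singular) case forced into $x = \pm 2$ and then dispatched by permutations plus one explicit solution. You instead split on whether some coordinate is non-elliptic: when it is, your upper-triangular $A$ with corner entry $z - \alpha y$ paired with the companion-type $B$ gives a completely explicit lift (and the computation $AB = \left(\begin{smallmatrix} z & -\alpha \\ \alpha^{-1} & 0\end{smallmatrix}\right)$ checks out); when all three coordinates are elliptic you pass to $\EL(\Fp)$, solve the linear system for $a$, verify Frobenius compatibility via $v^p = v^{-1}$, and invoke surjectivity of the norm $\Fpp^{\times}\to\Fp^{\times}$ to produce $b$. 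Your argument is more elementary in that it avoids conic geometry entirely and exhibits the lift in closed form, at the price of a case split and of leaning on the elliptic embedding of Lemma \ref{emb} (which the paper's proof of this particular theorem does not need); the paper's argument is more uniform and stays inside $\SL(\Fp)$ throughout, but hides the solution inside a point-counting statement. Both are complete.
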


\begin{proof}
We prove that there is a pair of matrices $(A, B)$ of the form \[A = \begin{pmatrix} 0 & 1 \\ -1 & x \end{pmatrix}, \text{ } B = \begin{pmatrix} a & b \\ c & d \end{pmatrix},\] in the tower $\E(x, y, z)$. As $\tr(B) = y$, $\tr(AB) = z,$ and $\det(B) = 1$, we have \[a + d = y, \text{ } dx - b + c = z, \text{ } ad - bc = 1.\]
From the first two equations, we find $a$ and $b$ in terms of the other variables. By plugging this in the last equation, the system reduces to $d (y - d) - c(dx + c - z) = 1$, so that \[Q_{xyz}(-1, c, d) = 1 + c^2 + d^2 + xcd - yd - zc = 0, \text{ where }\]
\[Q_{xyz}(t, u, v) = t^2 + u^2 + v^2 + xuv + ytv + ztu.\]

\par As a ternary quadratic form, $Q_{xyz}$ defines a projective conic $\mathcal{C}$ which is non-empty in $\Fp^3$. If there is a point $(t_0, u_0, v_0)$ in $\mathcal{C}$ with $t_0 \ne 0$, we obtain a solution for the system by taking $c = -u_0/t_0, d = -v_0/t_0$. On the other hand, if $P = (0, u_0, v_0)$ is in $\mathcal{C}$ and $l$ is a line different from $t = 0$ through $P$, then $l$ intersects $\mathcal{C}$ in a second point $Q$. As $l$ only intersects $t = 0$ at $P$, either the first coordinate of $Q$ is non-zero or $P = Q$ is a singular point of $\mathcal{C}$. In the former case, we obtain a solution for the system. In the latter, we have $\partial_u Q_{xyz} (0, u_0, v_0) = \partial_v Q_{xyz}(0, u_0, v_0) = 0$, i.e.
\[ 2u_0 + xv_0 = 2v_0 + xu_0 = 0,\]
so that $x = \pm 2$. We have then showed that $\E(x, y, z)$ is non-empty unless possibly $x = \pm 2$. By applying the coordinate permutations $r$ and $s$ the same follows for all $\E(x, y, z)$, except possibly when $x, y, z = \pm 2$. In these cases, the system can be solved explicitly by $t = \pm u = -1$ and $v = 0$. The result follows.
\end{proof}

\par We have then showed that the trace map $\Tr$ is surjective. Our next goal is to investigate the Nielsen orbit structure of the tower $\E(x, y, z)$. From Corollary \ref{haha}, it follows that for each $X$ in the subgroup generated by $A$ and $B$, $(A^X, B^X)$ and $(A, B)$ are not only in the same tower but also in the same Nielsen orbit.

\par We now turn to the subgroup $H \le G$ generated by $A$ and $B$ in order to understand the Nielsen orbit structure of the tower $\E(x, y, z)$. Although a full classification of subgroups of $\SL(\Fp)$ is possible, it is sufficient and more convenient for us to work with the classification of subgroups of $\PSL(\Fp)$ introduced in Section \ref{subgroup_class}.

\subsection{Singular Triples} \hfill

\par Consider a triple $(x, y, z) \in \Fp^3$ and let $(A, B) \in \E(x, y, z)$. Then, by Corollary \ref{fricke}, \[x^2 + y^2 + z^2 - xyz - 2 = \tr([A, B]).\] \par As previously remarked, the orbit structure in $\E(x, y, z)$ is influenced by the subgroup of $\SL(\Fp)$ generated by $A$ and $B$. In particular, if $(A, B)$ is a generating pair of $\SL(\Fp)$, then the tower contains a large orbit. Remarkably, there is an entire level which contains no towers with generating pairs. This is described in the next result.

\begin{lemma}\label{trace_thm}
Let $A, B \in \SL(\Fp)$ be such that $\tr([A, B]) = 2$. Then, $(A, B)$ does not generate $\SL(\Fp)$.
\end{lemma}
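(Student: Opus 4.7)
The plan is to show, case by case on the geometric type of $A$, that $A$ and $B$ are simultaneously triangularizable (perhaps only over $\Fpp$), which places $\langle A, B \rangle$ inside a proper subgroup of $\SL(\Fp)$, either a Borel or the normalizer of a non-split Cartan.

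First, I would dispose of the central case $A = \pm I$: then $A$ and $B$ commute, so $\langle A, B \rangle$ is abelian and hence proper.

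For the non-central, non-parabolic case, I would use Corollary \ref{hyperelliptic} to conjugate $(A, B)$ to a normal pair. When $A$ is hyperbolic, the normal pair $(D_x, C)$ lives in $\SL(\Fp)$, and Lemma \ref{normal} gives
\[ bc \, (\tr(A)^2 - 4) = 2 - \tr([A,B]) = 0, \]
so $bc = 0$ because $\tr(A)^2 - 4 \ne 0$. Hence $C$ is triangular and $\langle A, B \rangle$ lies inside the corresponding Borel, a proper subgroup of order $p(p-1)$. When $A$ is elliptic, I would pass to the elliptic embedding $\EL(\Fp)$ via $\varphi$; there $C$ has the form $\begin{pmatrix} a & b \\ b^p & a^p \end{pmatrix}$, so the identity $bc = 0$ reads $b^{p+1} = 0$ and forces $b = 0$ (and hence $c = 0$). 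Both $A$ and $B$ are then diagonal in $\EL(\Fp)$, so they commute and generate an abelian subgroup contained in the non-split Cartan of $\SL(\Fp)$, which has order $p+1$.

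For the non-central parabolic case, Lemma \ref{normal} no longer applies because $\tr(A)^2 - 4 = 0$, so I would argue by direct computation. Conjugate $A$ in $\SL(\Fp)$ to the form $\epsilon \begin{pmatrix} 1 & \alpha \\ 0 & 1 \end{pmatrix}$ with $\epsilon = \pm 1$ and $\alpha \ne 0$. Writing $B = \begin{pmatrix} a & b \\ c & d \end{pmatrix}$ with $ad - bc = 1$ and computing $[A, B]$ entry by entry yields $\tr([A, B]) = 2 + \alpha^2 c^2$. The hypothesis then forces $c = 0$, so $B$ is upper triangular and $\langle A, B \rangle$ lies in the upper Borel.

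The main technical subtlety I anticipate is the elliptic case, where the normal form lives in $\EL(\Fp) \subseteq \SL(\Fpp)$, so one must check that the conclusion ``$\langle A, B \rangle$ is proper'' continues to descend to $\SL(\Fp)$. The key observation is that Frobenius pairs the off-diagonal entries of an $\EL$ matrix ($c = b^p$), so the vanishing of $bc$ actually forces simultaneous diagonalization in $\EL(\Fp)$; this corresponds to $\langle A, B \rangle \subseteq \SL(\Fp)$ being contained in a non-split torus, and all other steps are routine once this is in hand.
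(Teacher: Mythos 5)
Your proof is correct, but it follows a genuinely different strategy from the paper's. Rather than normalizing $A$ by its geometric type, the paper normalizes the commutator: since $\tr([A,B]) = 2$, one may conjugate so that $[A,B]$ equals a lower unipotent $t_\mu = \left(\begin{smallmatrix} 1 & 0 \\ \mu & 1 \end{smallmatrix}\right)$. If $\mu = 0$ then $A$ and $B$ commute; if $\mu \neq 0$, equating traces in $ABA^{-1} = t_\mu B$ forces the $(1,2)$-entry of $B$ to vanish, so $B$ is lower triangular, and from $ABA^{-1} = t_\mu B$ one then reads off that $A$ is lower triangular too, as it must preserve the common eigenline of the lower triangular matrices $B$ and $t_\mu B$. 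This one computation covers every case at once (central, parabolic, hyperbolic, elliptic) and uses none of the Section \ref{technical} machinery: no normal pairs, no elliptic embedding, no Lemma \ref{normal}. Your route — splitting on the type of $A$, invoking Corollary \ref{hyperelliptic} and Lemma \ref{normal} in the non-parabolic cases, and supplying a hand computation for the parabolic case where that lemma degenerates to $0 = 0$ — is in fact precisely the argument the paper gives later for the forward direction of Theorem \ref{singular}, a stronger statement (a singular pair generates an \emph{affine} subgroup, not merely a proper one). So your proof is a correct preview of that later argument, at the cost of pulling in heavier machinery than this lemma itself needs. A small bonus of your parabolic identity $\tr([A,B]) = 2 + \alpha^2 c^2$ is that it supplies the parabolic analogue of Lemma \ref{normal}.
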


\begin{proof}
As $\tr([A, B]) = 2$, we know that $[A, B]$ is conjugate to \[t_{\mu} = \begin{pmatrix} 1 & 0 \\ \mu & 1 \end{pmatrix}\]
for some $\mu \in \Fp$. As conjugation does not affect whether or not $(A, B)$ generates $\SL(\Fp)$, we can assume that $[A, B] = t_{\mu}$. If $\mu = 0$, then $A$ and $B$ commute and therefore cannot generate $\SL(\Fp)$. Otherwise, write \[B = \begin{pmatrix} a & b \\ c & d \end{pmatrix}, \text{ so that } ABA^{-1} = t_{\mu}B = \begin{pmatrix} a & b \\ c + \mu a & d + \mu b \end{pmatrix}.\]
\par By taking traces on both sides we obtain $b = 0$, so that $B$ is lower triangular. However, as $[A, B] = t_{\mu}$, it follows that $A$ is also lower triangular, so that $A$ and $B$ cannot generate $\SL(\Fp)$. 
\end{proof}

\par Motivated by Lemma \ref{trace_thm}, we refer to triples at level $k = 2$ as singular triples. This notation extends to pairs $(A, B)$ of $\SL(\Fp)$, which we call singular when $\Tr(A, B)$ is singular, or equivalently, when $\tr([A, B]) = 2$. Our first goal is to classify singular triples according to the geometry of its elements. By considering the Cayley cubic of $\M_{2}(\Fp)$, \[x^2 + y^2 + z^2 - xyz = 4,\] as a quadratic equation in $x$, we obtain the discriminant $\Delta_x = (y^2-4)(z^2-4)$, which must be a square in $\Fp$. Similarly, $\Delta_y = (x^2-4)(z^2-4)$ and $\Delta_z = (x^2-4)(y^2-4)$ must also be squares. Hence a singular triple cannot have coordinates, say $x$ and $y$, which are hyperbolic and elliptic, for otherwise
\[\left(\frac{\Delta_z}{p}\right) = \left(\frac{x^2-4}{p}\right)\left(\frac{y^2-4}{p}\right)
= - 1,\] and $\Delta_z$ is not a square. Moreover, it is clear that if two coordinates of a singular triple are parabolic, then so is the third one. Therefore, we arrive at the following result:

\begin{lemma}\label{hyperb_triples}
A singular triple $(x, y, z)$ cannot have both a hyperbolic and an elliptic coordinate. Moreover, exactly one of the following conditions is true:\begin{enumerate}
    \item[(i)] $(x, y, z)$ has at least two hyperbolic coordinates.
    \item[(ii)] $(x, y, z)$ has at least two elliptic coordinates.
    \item[(iii)] $(x, y, z)$ has three parabolic coordinates.
\end{enumerate}
\end{lemma}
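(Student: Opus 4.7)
The plan is to exploit the discriminant conditions obtained by viewing the singular (level $k=2$) equation
\[x^2 + y^2 + z^2 - xyz = 4\]
as a quadratic in each variable separately. Solving for $x$ gives discriminant $\Delta_x = y^2 z^2 - 4(y^2+z^2-4) = (y^2-4)(z^2-4)$, and likewise $\Delta_y = (x^2-4)(z^2-4)$, $\Delta_z = (x^2-4)(y^2-4)$. Since $x,y,z \in \Fp$, all three must be squares in $\Fp$. With the trichotomy $\left(\frac{t^2-4}{p}\right) \in \{1,0,-1\}$ for $t$ hyperbolic/parabolic/elliptic, the lemma essentially becomes a statement about Legendre symbols together with one direct substitution.

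First I would formalize the mixing obstruction. Suppose, toward contradiction, that $x$ is hyperbolic and $y$ is elliptic. Then $\left(\frac{x^2-4}{p}\right) = 1$ and $\left(\frac{y^2-4}{p}\right) = -1$, so by multiplicativity $\left(\frac{\Delta_z}{p}\right) = -1$, contradicting that $\Delta_z$ is a square (note that $\Delta_z \ne 0$ here, since neither $x$ nor $y$ is parabolic). By symmetry, no singular triple mixes hyperbolic and elliptic coordinates.

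Second, I would handle the parabolic case: if two coordinates are parabolic then so is the third. Writing $x = 2\epsilon_1$, $y = 2\epsilon_2$ with $\epsilon_i \in \{\pm 1\}$ and substituting into the singular equation yields $8 + z^2 - 4\epsilon_1\epsilon_2 z = 4$, i.e.\ $(z - 2\epsilon_1\epsilon_2)^2 = 0$, so $z = 2\epsilon_1\epsilon_2 \in \{\pm 2\}$ is parabolic.

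Finally I combine the two steps. By the first step, after discarding parabolic coordinates the remaining coordinates are either all hyperbolic or all elliptic. If none are parabolic we are in case (i) or (ii) with three non-parabolic coordinates. If exactly one is parabolic we are still in (i) or (ii) with two non-parabolic coordinates. If exactly two are parabolic, the second step forces the third to be parabolic as well, a contradiction with ``exactly two''. The remaining option is that all three coordinates are parabolic, giving case (iii). Mutual exclusivity of (i), (ii), (iii) is automatic: (iii) excludes (i) and (ii) because it has no hyperbolic or elliptic coordinate, while (i) and (ii) exclude each other by the first step. The only real obstacle is bookkeeping the cases cleanly; both the discriminant computation and the parabolic substitution are one-line calculations.
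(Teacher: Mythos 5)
Your proof is correct and takes essentially the same approach as the paper: the discriminant computation $\Delta_z = (x^2-4)(y^2-4)$ (and its permutations) must be a square, which rules out mixing a hyperbolic coordinate with an elliptic one, and a direct substitution shows two parabolic coordinates force the third to be parabolic. Your case analysis at the end is more explicit than the paper's, which simply asserts the conclusion after these two observations, but the underlying argument is identical.
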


This allows us to classify singular triples $(x, y, z)$ into hyperbolic, elliptic, and parabolic, according to the most abundant geometric type of its coordinates. We now turn to the main goal of this section, which is to classify the group type of singular pairs $(A, B)$.

\begin{thm}\label{singular}
Let $(A, B)$ be a pair of elements of $\SL(\Fp)$. Then, $(A, B)$ is singular if and only $A$ and $B$ generate an affine subgroup of $\PSL(\Fp)$.
\end{thm}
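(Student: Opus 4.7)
The plan is to prove both directions of the equivalence by leveraging the normal form analysis set up in Section \ref{technical} and by extending the computation already carried out in the proof of Lemma \ref{trace_thm}.

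For the forward direction, suppose that $(A, B)$ generates an affine subgroup of $\PSL(\Fp)$. After conjugating $(A, B)$ by a suitable element of $\GL(\Fp)$, we may assume that the generated subgroup is contained in $A_1$ or $A_2$, so that $A, B \in \pm H_1$ or $A, B \in \pm \phi(H_2)$. Since $\pm I$ is central, the commutator is unaffected by absorbing the signs, and it suffices to compute $[A, B]$ when $A, B \in H_1$ or when $A, B \in \phi(H_2)$. In the first case, $H_1$ consists of lower triangular matrices in $\SL(\Fp)$, and the commutator of two such matrices is lower triangular with both diagonal entries equal to $1$; hence $\tr([A, B]) = 2$. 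In the second case, $H_2$ is abelian (its elements are diagonal in $\EL(\Fp)$), so $[A, B] = I$ and again $\tr([A, B]) = 2$.

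For the backward direction, assume $\tr([A, B]) = 2$. Following the first paragraph of the proof of Lemma \ref{trace_thm}, I would conjugate in $\GL(\Fp)$ so that $[A, B] = t_\mu$ for some $\mu \in \Fp$. If $\mu \ne 0$, the calculation in that lemma already forces both $A$ and $B$ to be lower triangular in $\SL(\Fp)$, so $\langle A, B \rangle \le H_1$ and the image in $\PSL(\Fp)$ is affine. If $\mu = 0$, then $A$ and $B$ commute, and I would split into sub-cases according to the geometric type of $A$ (exchanging $A$ and $B$ if necessary). When $A = \pm I$, the image subgroup is cyclic, generated by $\rho(B)$, and so lies in a conjugate of $A_1$ (when $B$ is hyperbolic or parabolic) or $A_2$ (when $B$ is elliptic). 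When $A$ is hyperbolic, Corollary \ref{hyperelliptic} conjugates $A$ to $D_u$ with $u \ne \pm 1$, and since $D_u$ has distinct eigenvalues its centralizer in $\SL(\Fp)$ is the split diagonal torus; hence $B$ is also diagonal, placing $\langle A, B \rangle$ inside $H_1$. The elliptic case is entirely symmetric, using $\varphi$ to pass into $\EL(\Fp)$, conjugating $A$ to $D_v$ with $v^{p+1} = 1$, and observing that the centralizer of $D_v$ in $\EL(\Fp)$ is the non-split torus $H_2$. Finally, when $A$ is parabolic but non-central, conjugating so that $A = \pm t_\nu$ with $\nu \ne 0$, a direct calculation of the centralizer of $t_\nu$ in $\SL(\Fp)$ forces $B = \pm t_c$ for some $c$, so $\langle A, B \rangle$ lies in a conjugate of $H_1$.

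The main obstacle I anticipate is the clean handling of the commuting case $\mu = 0$ uniformly across geometric types, and in particular the elliptic sub-case, which requires working through the isomorphism $\varphi: \SL(\Fp) \to \EL(\Fp)$ to realize the centralizer of $A$ as a non-split torus sitting inside the affine subgroup $A_2$. The parabolic sub-case demands an explicit centralizer computation but is otherwise direct, while the central and hyperbolic sub-cases follow immediately from the structure theory already developed.
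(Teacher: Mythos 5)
Your proof is correct, but it follows a genuinely different organizing principle from the paper's for the harder direction (singular $\Rightarrow$ affine). The paper decomposes by the geometric type of the \emph{trace triple} $\Tr(A,B)$: Lemma \ref{hyperb_triples} shows a singular triple is hyperbolic, elliptic, or parabolic, and in each case the paper conjugates $A$ to diagonal form (in $\SL(\Fp)$ or $\EL(\Fp)$) and applies Lemma \ref{normal}, which forces $bc(\tr(A)^2-4)=0$ and hence $B$ triangular. You instead decompose by the structure of the \emph{commutator itself}: conjugate so $[A,B]=t_\mu$; if $\mu\neq 0$, the computation inside Lemma \ref{trace_thm} already simultaneously triangularizes $A$ and $B$; if $\mu=0$ the pair commutes and you reduce to computing centralizers of non-central elements, split by the geometric type of $A$ (split torus for hyperbolic, non-split torus $H_2$ via $\varphi$ for elliptic, unipotent centralizer for parabolic). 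Both approaches use the same underlying toolkit (normal forms, the elliptic embedding), but yours bypasses Lemma \ref{hyperb_triples} and Lemma \ref{normal} in favor of centralizer computations, and as a bonus gives a cleaner structural dichotomy for singular pairs: either they commute, or they simultaneously lower-triangularize with $[A,B]=t_\mu$, $\mu\neq 0$. The converse direction (affine $\Rightarrow$ singular) is essentially identical in both proofs. One small remark: the paper's parabolic sub-case asserts that all singular parabolic triples lie in the orbit of $(2,2,2)$, which is actually false (e.g. $(2,-2,-2)$ forms a separate orbit, though the same computation applies to it); your route avoids this issue entirely by never appealing to the orbit structure of triples.
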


\begin{proof}
First, assume that $(A, B)$ is singular, so that the triple $\Tr(A, B) = (x, y, z)$ is also singular. By Lemma \ref{hyperb_triples}, $(x, y, z)$ can either be hyperbolic, elliptic, or parabolic. We consider these three cases:

\begin{enumerate}
    \item[(i)] {\it Hyperbolic.} Notice that coordinate permutations only induce a Nielsen move on the pair $(A, B)$, therefore it does not change the subgroup generated by the pair. Hence, we can assume that $x$ and $y$ are hyperbolic. Since $A$ is hyperbolic and conjugation does not affect the type of group generated by $A$ and $B$, by Corollary \ref{hyperelliptic} we assume \[A = \begin{pmatrix} u & 0 \\ 0 & u^{-1} \end{pmatrix},\text{ } B = \begin{pmatrix} a & b \\ c & d \end{pmatrix} \in \SL(\Fp).\]
    \par This is a normal pair in $\SL(\Fp)$ lying at level $k = 2$. By Lemma \ref{normal}, it follows that $bc(x^2 - 4) = 0$. As $x$ is hyperbolic, either $b = 0$ or $c = 0$, so that $(A, B)$ generates a subgroup of either the lower triangular group $H_1 \le \SL(\Fp)$, or of its conjugate, the upper triangular group 
    \[H_1^Q \le \SL(\Fp),\text{ where }Q = \begin{pmatrix} 0 & 1 \\ -1 & 0 \end{pmatrix}.\]
    In either case, $(A, B)$ generates an affine subgroup of $\PSL(\Fp)$.
    \vspace{3pt}
    
    \item[(ii)] {\it Elliptic.} As before, we can assume that the coordinates $x$ and $y$ are elliptic, so that $A$ and $B$ are both elliptic. The isomorphism $\SL(\Fp) \cong \EL(\Fp)$ induced by the conjugation map $\varphi$ allows us to assume that $A$ is a diagonal matrix, so that
    \[ A = \begin{pmatrix} v & 0 \\ 0 & v^{-1} \end{pmatrix},\text{ } B = \begin{pmatrix} a & b \\ b^p & a^p \end{pmatrix} \in \EL(\Fp).\]
    \par This is a normal pair in $\EL(\Fp)$ lying at level $k = 2$, hence by Lemma \ref{normal} it follows that $b^{p+1}(x^2-4) = 0$. As $x$ is elliptic, $b = 0$, so that $B$ is a diagonal matrix in $\EL(\Fp)$. Hence $A, B \in H_2$ and $(A, B)$ generates an affine subgroup of $\PSL(\Fp)$.
    \vspace{3pt}
    
    \item[(iii)] {\it Parabolic.} 
    Notice that all singular parabolic triples are in the same orbit as $(2, 2, 2)$. As Nielsen moves do not change the subgroup generated by the pair $(A, B)$, we can assume that $\Tr(A, B) = (2, 2, 2)$. If $A, B$ are both the identity, then the result is immediate. Otherwise, upon possibly exchanging $A$ and $B$, we can assume $A \ne I$. As $A$ is parabolic, conjugation allows us to assume that
    \[A = \begin{pmatrix} 1 & 0 \\ t & 1 \end{pmatrix},\text{ } B = \begin{pmatrix} a & b \\ c & d \end{pmatrix} \in \SL(\Fp).\]
    \par Using $\tr(B) = \tr(AB) = 2$, we obtain $a + d = a+d + bt$. Since $A \ne I$, we must have $b = 0$. Therefore $A, B$ are lower triangular matrices in $H_1$, and therefore $(A, B)$ generates an  affine subgroup of $\PSL(\Fp)$.

    \end{enumerate}

\par Conversely, assume that $(A, B)$ generates an affine subgroup of $\PSL(\Fp)$. Then, as $\rho^{-1}(A_1) = H_1 \le \SL(\Fp)$ and $(\rho \circ \phi)^{-1}(A_2) = H_2 \le \EL(\Fp)$, we can assume that $A$ and $B$ have a lower diagonal representation as matrices, either in $\SL(\Fp)$ or in $\EL(\Fp)$. Let
\[A = \begin{pmatrix} u & 0 \\ * & u^{-1} \end{pmatrix},\text{ } B = \begin{pmatrix} v & 0 \\ * & v^{-1} \end{pmatrix}.\]
Therefore $x = u + u^{-1}, y = v + v^{-1},$ and $z = uv + u^{-1}v^{-1}$, so that $x^2 + y^2 + z^2 - xyz - 2 = 2$. It follows that $(A, B)$ is a singular pair.
\end{proof}

\par We have showed that singular pairs $(A, B)$ are somewhat special. In fact, any singular pair generates an affine subgroup of $\PSL(\Fp)$, and any pair that generates an affine subgroup of $\PSL(\Fp)$ is singular. This understanding suffices for the singular case and we will not pursue Nielsen orbit structures when $k = 2$. In fact, the Cayley cubic $\M_{2}(\Fp)$ exhibits a linear behavior different from all other levels, making its treatment unique.

\par We are then left with pairs $(A, B)$ which generate projective or exceptional subgroups of $\PSL(\Fp)$ that are not affine, and therefore cannot lie at level $k = 2$. For a non-singular triple $(x, y, z) \in \M_k(\Fp)$, $k\ne 2$, the tower $\E(x, y, z)$ only contains pairs that generate projective or exceptional subgroups of $\PSL(\Fp)$. Our next goal is to investigate the Nielsen orbit structure of these towers.

\subsection{Conjugation of Towers} \hfill

\par Recall that towers containing generating pairs will likely have large Nielsen orbits as conjugation by every element $X$ of $\SL(\Fp)$ is induced by a Nielsen move, and moreover this conjugation could only stabilize a generating pair $(A, B)$ if $X = \pm I$, since $\SL(\Fp)$ has center $\{ \pm I\}$.

\par Interestingly, this accounts for most of the Nielsen orbit structure of a tower containing a generating pair. The remarkable feature here is that given a non-singular triple $(x, y, z)$, the subgroup generated by any pair $(A, B) \in \E(x, y, z)$ is, upon conjugation, fully determined by $(x, y, z)$. In fact, we have the following result:

\begin{thm}\label{towers}
Let $(x, y, z)$ be a non-singular triple. Then, the tower $\E(x, y, z)$ contains one conjugacy class in $\GL(\Fp)$ and two conjugacy classes in $\SL(\Fp)$. 
\end{thm}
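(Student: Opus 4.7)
The plan is to use the non-singularity hypothesis to conjugate two pairs $(A_1, B_1), (A_2, B_2) \in \E(x, y, z)$ to a common normal form, and then to count the $\SL(\Fp)$-refinements via the centralizer of that normal form. I first note that non-singularity prevents any of $A_i$, $B_i$, or $A_iB_i$ from equaling $\pm I$: if for example $A_1 = \pm I$, the trace equations give $z = \pm y$ and $x = \pm 2$, and a direct substitution yields $k = 2$. Consequently $A_1$ and $A_2$ are non-trivial elements of common trace $x$ in $\SL(\Fp)$, hence $\GL(\Fp)$-conjugate: this is Proposition \ref{conjug} in the non-parabolic case, and in the parabolic case it follows since the two $\SL(\Fp)$-classes of non-trivial parabolic elements of trace $\pm 2$ fuse in $\GL(\Fp)$. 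Using Corollary \ref{hyperelliptic}, I conjugate both pairs so that $A_1 = A_2 =: A$ equals one of the standard forms $D_u$, $\phi(D_v)$, or a fixed non-trivial parabolic matrix.

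Next, I show that the centralizer $Z_{\GL(\Fp)}(A)$ acts transitively on the admissible $B$. Writing $B = \begin{pmatrix} a & b \\ c & d\end{pmatrix}$ in the basis in which $A$ has its standard form, the trace conditions $\tr(B) = y$ and $\tr(AB) = z$ partially determine the entries. When $A$ is non-parabolic, $a$ and $d$ are uniquely determined; Lemma \ref{normal} combined with $k \ne 2$ and $x^2 \ne 4$ gives $bc \ne 0$, and the torus $Z_{\GL(\Fp)}(A)$ acts by $(b, c) \mapsto (\lambda b, \lambda^{-1} c)$ with $\lambda$ ranging over $\Fp^\times$ in the split (hyperbolic) case, or over the norm-one subgroup of $\Fpp^\times$ in the non-split (elliptic) case; in either case this reaches every admissible $B$ since $b \ne 0$. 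When $A$ is parabolic, normalized to $\begin{pmatrix} \pm 1 & 0 \\ 1 & \pm 1 \end{pmatrix}$, the trace conditions instead fix $b$ and leave $a \in \Fp$ as a free parameter, and explicit computation shows that the unipotent centralizer $\begin{pmatrix} 1 & 0 \\ r & 1 \end{pmatrix}$ acts on $a$ by translation, again transitively. This proves that $\E(x, y, z)$ is a single $\GL(\Fp)$-conjugacy class.

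For the refinement into two $\SL(\Fp)$-orbits, I pass to $Z_{\SL(\Fp)}(A)$. In the non-parabolic case, this consists of diagonal matrices with determinant $1$, so the action on $b$ becomes $b \mapsto \alpha^2 b$; this partitions the $\GL$-orbit into exactly two $\SL$-orbits indexed by the square class of $b$, and both occur since $b$ ranges over all of $\Fp^\times$. In the parabolic case, the $\SL$- and $\GL$-centralizers induce the same action on $B$, but the chosen $\GL$-normal form of $A$ lifts to two distinct $\SL(\Fp)$-classes of non-trivial parabolics of trace $x$ (Proposition \ref{conjug}), and each contributes one $\SL$-orbit to the tower. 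The main obstacle I anticipate is the parabolic case: one must verify that both $\SL$-orbits are genuinely populated (rather than an artefact of the normalization) and must handle cleanly the subcase in which all three of $x, y, z$ are parabolic, which I expect to resolve by writing down explicit representative pairs in each $\SL(\Fp)$-class.
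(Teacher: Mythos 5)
Your proposal is correct, and in one respect it takes a genuinely different route from the paper's proof. In the non-parabolic cases the two arguments coincide: fix $A$ in the diagonal normal form $D_u$ (in $\SL(\Fp)$ or, for elliptic trace, in $\EL(\Fp)$), note that the trace conditions determine the diagonal of $B$ and that Lemma \ref{normal} plus non-singularity forces $bc \ne 0$, and let the centralizing torus act: the $\GL(\Fp)$-centralizer moves $b$ transitively while the $\SL(\Fp)$-centralizer only preserves its square class, giving one class versus two. The divergence is in how a parabolic first coordinate is handled. The paper never confronts it: it first observes that the conclusion is invariant under Nielsen moves, and then checks that every non-singular orbit contains a triple with a non-parabolic coordinate (the four all-parabolic non-singular triples reach the coordinate $6$ after a Vieta move), so only the toral normal forms are ever needed. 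You instead treat a parabolic, non-central $A$ directly, using the unipotent part of its centralizer to translate the free entry $a$, and the fact that the two $\SL(\Fp)$-classes of non-central parabolics of trace $\pm 2$ fuse in $\GL(\Fp)$ to produce the one-versus-two count. Both routes work; yours is more self-contained (no Nielsen-invariance step, and the all-parabolic subcase you flag as an obstacle is in fact already covered by your general parabolic computation), while the paper's reduction buys a single uniform calculation. The one detail you must still pin down in the parabolic case is that the forced off-diagonal entry of $B$ is non-zero, since translation of $a$ by multiples of that entry is transitive only then; this is precisely where non-singularity enters, because $x = 2$ with $z = y$ (or $x = -2$ with $z = -y$) forces $k = 2$. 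With that in place, both $\SL(\Fp)$-orbits are indeed populated, since $\GL(\Fp)$-conjugation preserves the tower while interchanging the two parabolic classes of $A$.
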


\begin{proof}
Notice that the desired property is invariant under Nielsen moves. In other words, given $(x, y, z)$ a non-singular triple for which $\E(x, y, z)$ contains one conjugacy class in $\GL(\Fp)$ and two conjugacy classes in $\SL(\Fp)$ and $\mathcal{N}$ an arbitrary Nielsen move, the tower $\E(\mathcal{N}(x, y, z))$ also contains one $\GL(\Fp)$ conjugacy class and two $\SL(\Fp)$ conjugacy classes. In fact, $\mathcal{N}$ creates a natural bijection between $\E(x, y, z)$ and $\E(\mathcal{N}(x, y, z))$ in which $(A, B)$ corresponds to $\mathcal{N}(A, B)$. This bijection commutes with conjugation so that $\mathcal{N}((A, B)^X) = (\mathcal{N}(A, B))^X$ for all $X$ in $\GL(\Fp)$. Hence the two towers share the same conjugacy structure in $\GL(\Fp)$ and $\SL(\Fp)$.

\par We now prove the statement for at least one triple $(x, y, z)$ in each Nielsen orbit. First, assume that at least one of the coordinates is hyperbolic. Via Nielsen moves, we can assume that this coordinate is $x$, with $x = u + u^{-1}$. Then, for any pair $(A, B) \in \E(x, y, z)$, we can find an element $C \in \SL(\Fp)$ such that $(A, B)$ is conjugate to $(D_u, C)$, where
\[D_u = \begin{pmatrix} u & 0 \\ 0 & u^{-1} \end{pmatrix},\text{ } C = \begin{pmatrix} a & b \\ c & d \end{pmatrix} \in \SL(\Fp).\]
\par The idea now is to look at different elements $C$ for which $(D_u, C) \in \E(x, y, z)$ and check the conjugation of $(D_u, C)$ for a given $C$. For fixed $\tr(C) = y$ and $\tr(D_uC) = z$, the value of $(a, d)$ is determined by $(x, y, z)$, and conversely any $(a, d)$ satisfying such equations yield a matrix $C$ such that $(D_u, C) \in \E(x, y, z)$. Moreover, $C$ always satisfies $bc \ne 0$, otherwise the pair $(D_u, C)$ would be singular as its elements would be contained either in the upper or in the lower triangular group, and hence generate an affine subgroup in $\PSL(\Fp)$.

\par This shows that there are exactly $p-1$ pairs $(D_u, C)$ in $\E(x, y, z)$, one for each pair $(b, c)$ with a fixed non-zero product. Now, observe that the centralizer of $D_u$ in $\GL(\Fp)$ is given by all diagonal matrices. Hence, notice that for
\[X = \begin{pmatrix} t & 0 \\ 0 & 1 \end{pmatrix} \text{ and } C = \begin{pmatrix} a & b \\ c & d \end{pmatrix}, \text{ we have } C^X = \begin{pmatrix} a & bt \\ ct^{-1} & d \end{pmatrix} \in \SL(\Fp),\] therefore all pairs $(D_u, C)$ in $\E(x, y, z)$ are conjugate in $\GL(\Fp)$. In $\SL(\Fp)$, given
\[X = \begin{pmatrix} t & 0 \\ 0 & t^{-1} \end{pmatrix} \text{ and } C = \begin{pmatrix} a & b \\ c & d \end{pmatrix}, \text{ we have } C^X = \begin{pmatrix} a & bt^2 \\ ct^{-2} & d \end{pmatrix} \in \SL(\Fp),\]
so that two pairs $(D_u, C)$ in $\E(x, y, z)$ are conjugate in $\SL(\Fp)$ if and only if their corresponding values of $b$ are both squares or non-squares in $\Fp$. As every pair $(A, B)$ in $\E(x, y, z)$ is conjugate to a pair $(D_u, C)$, it follows that $\E(x, y, z)$ contains one $\GL(\Fp)$ conjugacy class and two $\SL(\Fp)$ conjugacy classes.

\par The same proof still works under the assumption that at least one of the coordinates in $(x, y, z)$ is elliptic, the only essential difference being that instead of working with normal pairs in $\SL(\Fp)$, we consider normal pairs in the elliptic embedding $\EL(\Fp)$.

\par We are left with the case when all coordinates in $(x, y, z)$ are parabolic and $(x, y, z)$ is non-singular. There are only four such triples, and it is easy to check that a non-parabolic coordinate $t = 6$ appears in the orbit of $(x, y, z)$. The result follows. \end{proof}

\par In the proof above, we omitted the details of the elliptic case by taking the hyperbolic case as our model. We remark that the inverse strategy is used by Macbeath (c.f. \cite{macbeath}), thus covering both cases in detail. With this result, we can deduce that all towers in fact have the same height:

\begin{cor}
Let $(x, y, z)$ be a non-singular triple. Then $|\E(x, y, z)| = p(p^2-1)$.
\end{cor}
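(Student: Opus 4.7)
The plan is to combine Theorem \ref{towers} with an orbit--stabilizer count. By Theorem \ref{towers}, $\E(x, y, z)$ consists of a single conjugacy class under the $\GL(\Fp)$-action by conjugation, so for any representative $(A, B) \in \E(x, y, z)$ one has $|\E(x, y, z)| = |\GL(\Fp)| / |\text{Stab}(A, B)|$. Since $|\GL(\Fp)| = p(p-1)^2(p+1)$, the target $|\E(x, y, z)| = p(p^2-1)$ is equivalent to the claim $|\text{Stab}(A, B)| = p - 1$, and the whole problem reduces to computing this stabilizer.

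To access the stabilizer, I would first invoke the Nielsen-move reduction from the proof of Theorem \ref{towers} to assume, without loss of generality, that $(x, y, z)$ has at least one non-parabolic coordinate; say $x = u + u^{-1}$ with $u \ne \pm 1$. By Corollary \ref{hyperelliptic}, the pair $(A, B)$ is then conjugate to a normal pair $(D_u, C)$, in $\SL(\Fp)$ in the hyperbolic case or in $\EL(\Fp)$ in the elliptic case, so it suffices to determine the stabilizer of $(D_u, C)$. Any element of $\GL(\Fp)$ that stabilizes this pair must in particular centralize $D_u$, hence lie in the centralizer $T$ of $D_u$: this is the diagonal torus of order $(p-1)^2$ in the hyperbolic case, and the non-split torus of order $p^2-1$ in the elliptic case.

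The decisive input is the intermediate count already carried out inside the proof of Theorem \ref{towers}. In the hyperbolic case there are exactly $p-1$ normal pairs $(D_u, C')$ in $\E(x, y, z)$, parametrized by the pairs $(b, c)$ with fixed non-zero product $bc$ determined by the level, and all of them are $T$-conjugate; by orbit--stabilizer applied inside $T$, the $T$-stabilizer of $(D_u, C)$ has order $(p-1)^2/(p-1) = p-1$. In the elliptic case, the analogous parametrization by $b \in \Fpp^\times$ with $b^{p+1}$ fixed yields exactly $p+1$ normal pairs, all $T$-conjugate, so the $T$-stabilizer has order $(p^2-1)/(p+1) = p-1$. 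In both cases, since the full $\GL(\Fp)$-stabilizer is already contained in $T$, it has order exactly $p-1$.

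Combining these ingredients gives $|\E(x, y, z)| = p(p-1)^2(p+1)/(p-1) = p(p^2-1)$, completing the proof. I do not anticipate serious technical obstacles: the Nielsen-move reduction to a non-parabolic coordinate is already available from the last step of the proof of Theorem \ref{towers} (the four non-singular all-parabolic triples are joined, via a single Vieta involution, to a triple containing the non-parabolic coordinate $6$), and the hyperbolic and elliptic cases run in parallel with a single uniform conclusion.
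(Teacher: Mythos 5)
Your proposal is correct and sits within the same orbit--stabilizer framework as the paper, but you compute the stabilizer order by a different and somewhat longer route. The paper's argument is a single observation: the only elements of $\SL(\Fp)$ commuting with a normal pair $(D_u, C)$ are $\pm I$ (a diagonal $\mathrm{diag}(s,s^{-1})$ commuting with a non-triangular $C$ forces $s^2 = 1$), so each of the two $\SL(\Fp)$-conjugacy classes from Theorem \ref{towers} has size $|\SL(\Fp)|/2$, and the tower has size $|\SL(\Fp)| = p(p^2-1)$. Your version instead bounds the $\GL(\Fp)$-stabilizer by the centralizer torus $T$ of $D_u$ and then extracts $|\mathrm{Stab}| = p-1$ indirectly, via orbit--stabilizer inside $T$ applied to the count of normal pairs established in the proof of Theorem \ref{towers}. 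This is valid, but it leans harder on facts that are only sketched in the paper: the $p+1$ normal-pair count in the elliptic case (the paper only makes the hyperbolic count of $p-1$ explicit and says the elliptic case is analogous), the fact that all normal pairs are genuinely $T$-conjugate (not merely $\GL(\Fp)$-conjugate), and the identification of $Z_{\GL(\Fp)}(\phi(D_v))$ as a non-split torus of order $p^2-1$. The paper's direct centralizer computation avoids all of that and is the more economical route, while yours has the mild advantage of not requiring you to re-inspect the shape of $C$ at all once the tower count is in hand.
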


\par This follows from the previous proof, noticing that the only elements of $\SL(\Fp)$ that commute with a normal pair $(D_u, C)$ are $\pm I$. Now, let $(x, y, z)$ be a non-singular triple and $(A, B)$ be in the tower $\E(x, y, z)$. Then, Theorem \ref{towers} shows that
\[\E(x,y,z) = \{ (A, B)^X : X \in \GL(\Fp) \}.\]
In particular, the conjugacy class of a subgroup generated by a pair in $\E(x, y, z)$ is determined by $(x, y, z)$. Moreover, as Nielsen moves do not change the subgroup generated by a pair $(A, B)$, it follows that every orbit of triples in $\Fp^3$ has associated to it a single conjugacy class in $\GL(\Fp)$ of subgroups of $\SL(\Fp)$.

\par Let $\Orb$ be an orbit of non-singular triples at level $k$ and let $H$ be one of its corresponding subgroups in $\SL(\Fp)$. Then, for $(x, y, z)$ in $\mathcal{O}$ and $(A, B)$ in $\E(x, y, z)$, we have that $(A, B)$ generates $H^X \le \SL(\Fp)$ for some $X$ in $\GL(\Fp)$. As $-I \in H$ if and only if $-I \in H^X$, the isomorphism class of the image of $H$ under the canonical map $\rho: \SL(\Fp) \to \PSL(\Fp)$ is well-defined as a type of subgroup of $\PSL(\Fp)$.

\par Therefore, we have associated an isomorphism class of subgroups of $\PSL(\Fp)$ to each non-singular orbit $\Orb$ at level $k$. By Theorem \ref{singular}, this is either a projective or an exceptional subgroup of $\PSL(\Fp)$. When the group is projective, the pair $(A, B)$ generates $\PSL(\Fp)$. Moreover, notice that $\PSL(\Fp)$ cannot be embedded in $\SL(\Fp)$ as the latter has only one element of order two (namely, $-I$), and $\PSL(\Fp)$ has several (in fact, the image under $\rho$ of any element of trace zero). Therefore, $(A, B)$ also generates $\SL(\Fp)$.

\par We conclude that a non-singular orbit $\Orb$ is either associated to an exceptional subgroup of $\PSL(\Fp)$, or any pair $(A, B)$ in one of its towers is in fact a generating pair of $\SL(\Fp)$. When $\Orb$ is of the latter type, we refer to it as a generating orbit, and otherwise we refer to it as a exceptional orbit.

\par We know that all non-singular levels $k$ are a disjoint union of generating and exceptional orbits. The problem of strong approximation aims to find exactly this orbit decomposition and in particular prove that there is a unique generating orbit at each \mbox{level $k$}. The classification of all exceptional orbits will gives us a precise analytic formulation of this statement. We investigate this next.

\section{{\it Classification:} Exceptional Orbits}\label{main}

\par In his work, Macbeath gives a brief account on conditions that a pair $(A, B)$ generating an exceptional subgroup of $\PSL(\Fp)$ must satisfy. These conditions are not directly related to the orbit of $\Tr(A, B)$, but instead with the orders of $A$ and $B$ as elements of $\PSL(\Fp)$. We now expand on those ideas to identify all the exceptional orbits.

\par Let $(A, B)$ be a pair in a tower over an exceptional orbit and let $H \le \PSL(\Fp)$ be the subgroup generated by $(A, B)$. Hence, $H$ is isomorphic to either $D_n$, $A_4$, $S_4$, or $A_5$. We now attempt to describe this exceptional orbit. Roughly speaking, the idea is to look at order of elements in generating pairs of exceptional groups, which provides us with the order of $A$ and $B$ in $\PSL(\Fp)$, and relate this with the traces of $A$ and $B$ in $\Fp$. This argument will be carried out separately for each exceptional subgroup of $\PSL(\Fp)$.

\subsection{Order \& Trace}\hfill

\par In order to proceed, it is important to have a way of relating the order of an element $X$ in $\PSL(\Fp)$ with its corresponding trace in $\SL(\Fp)$. This is given by the following lemma:
 
\begin{lemma}\label{order-trace}
Let $X$ be in $\SL(\Fp)$ and let $h$ be its order in $\PSL(\Fp)$.
\begin{itemize}
    \item[(i)] If $h = 2$, then $\tr(X) = 0$.
    \item[(ii)] If $h = 3$, then $\tr(X) = \pm 1$.
    \item[(iii)] If $h = 4$, then $\tr(X) = \pm \sqrt{2}$. 
    \item[(iv)] If $h = 5$, then $\tr(X) = \frac{\pm 1 \pm \sqrt{5}}{2}$.
\end{itemize}
\end{lemma}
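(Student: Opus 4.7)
The plan is to pass to eigenvalues. Since $p > 5$ and each of $h = 2, 3, 4, 5$ is less than $p$, I would first rule out the parabolic case: a non-scalar parabolic element of $\SL(\Fp)$ has order exactly $p$ in $\PSL(\Fp)$, because its $n$-th power is itself a non-trivial parabolic matrix unless $p \mid n$. Hence $X$ is semisimple and diagonalizable over $\Fpp$, with eigenvalues $\lambda, \lambda^{-1} \in \Fpp^{\times}$, so that $\tr(X) = \lambda + \lambda^{-1}$. The hypothesis that $X$ has order $h$ in $\PSL(\Fp)$ then translates to $\lambda^h = \pm 1$ with no smaller positive power of $\lambda$ lying in $\{\pm 1\}$; in particular $\lambda^{2h} = 1$ and $\lambda \notin \{\pm 1\}$.

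I would then dispose of each case by computing $\lambda + \lambda^{-1}$. For (i), $\lambda^2 = 1$ forces $\lambda = \pm 1$ (excluded), so $\lambda^2 = -1$, giving $\lambda^{-1} = -\lambda$ and $\tr(X) = 0$. For (ii), if $\lambda^3 = 1$ and $\lambda \ne 1$ then $\lambda$ is a primitive cube root and $1 + \lambda + \lambda^{-1} = 0$ yields $\tr(X) = -1$; if $\lambda^3 = -1$, then $-\lambda$ is a primitive cube root and $\tr(X) = +1$. For (iii), $\lambda^4 = 1$ with $\lambda \ne \pm 1$ would force $\lambda^2 = -1$ and hence order $2$ in $\PSL(\Fp)$, contradicting minimality; so $\lambda^4 = -1$, whence $\lambda^{-2} = -\lambda^2$ and $\tr(X)^2 = \lambda^2 + 2 + \lambda^{-2} = 2$. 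For (iv), set $\nu = \lambda$ if $\lambda^5 = 1$ and $\nu = -\lambda$ if $\lambda^5 = -1$; in either situation $\nu$ is a primitive fifth root of unity, and dividing $1 + \nu + \nu^2 + \nu^3 + \nu^4 = 0$ by $\nu^2$ produces the relation $\mu^2 + \mu - 1 = 0$ for $\mu = \nu + \nu^{-1}$, so $\mu = (-1 \pm \sqrt{5})/2$ and $\tr(X) = \pm \mu = (\pm 1 \pm \sqrt{5})/2$.

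The main care needed is twofold: at the start, to dispose of the parabolic case using the standing assumption $p > 5$ (otherwise values $\tr(X) = \pm 2$ could produce elements of order $p$ in $\PSL(\Fp)$); and within each case, to invoke minimality of $h$ carefully to eliminate subordinate orders (for instance, to exclude $\lambda^2 = -1$ in case (iii)). Case (iv) is the only one whose arithmetic is non-routine, since it rests on recognizing the minimal polynomial $\mu^2 + \mu - 1$ of $2\cos(2\pi/5)$, which is exactly the origin of the $\sqrt{5}$ in the statement.
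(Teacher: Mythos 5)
Your proposal is correct and follows essentially the same route as the paper: rule out the parabolic case, diagonalize over $\Fpp$ so that $\tr(X) = \lambda + \lambda^{-1}$ with $\lambda^h = \pm 1$ and no smaller power in $\{\pm 1\}$, and then compute case by case. The only differences are cosmetic (you square the trace in case (iii) and divide the cyclotomic relation by $\nu^2$ in case (iv), where the paper instead factors the relevant quartics), so the arguments are interchangeable.
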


\begin{proof}
We can assume that $h < p$ in the cases above. As $h \ne 1$ and $p$ does not divide the order of $X$ in $\SL(\Fp)$, we know that $X$ is non-parabolic. In particular, $X$ is conjugate to a diagonal matrix $D_{u}$ for some $u \in \Fpp$. Observe that as trace is invariant under conjugation, the matrices $X$ and $D_u$ have the same trace in $\Fp$. Notice also that $X^h = \pm I$ in $\SL(\Fp)$, so that $D_u^h = \pm I$ as well. Moreover, we remark that the only element of order two in $\SL(\Fp)$ is $-I$. We now consider each case separately.
\begin{itemize}
    \item[(i)] If $h = 2$, either $D_u^2 = I$ or $D_u^2 = -I$. In the first case, $D_u = \pm I$, but then $h$ is actually one, as $X$ maps to $\pm I$ in $\PSL(\Fp)$. In the second case, $D_u^2 = -I$, so that $u^2 = -1$ and $\tr(X) = \tr(D_u) = u + u^{-1} = 0$.
    \item[(ii)] If $h = 3$, we have $D_u^3 = \pm I$, so that $u^3 = \pm 1$. As $D_u \ne \pm I$, we must have $u \ne \pm 1$, and so $u^2 \pm u + 1 = 0$. Therefore $\tr(X) = \tr(D_u) = u + u^{-1} = \pm 1$.
    \item[(iii)] If $h = 4$, either $D_u^4 = I$ or $D_u^4 = -I$. In the first case, we must have $D_u^2 = \pm I$, but then $h$ equals at most two. Hence $D_u^4 = -1$ and $u^4 = -1$, so that $(u^2 -\sqrt{2} u  + 1)$ $(u^2 + \sqrt{2} u + 1) = 0$ and $u^2 + 1 = \pm \sqrt{2} u$. Therefore $\tr(X) = \tr(D_u) = u + u^{-1} = \pm \sqrt{2}$.
    \item[(iv)] If $h = 5$, we have $D_u^5 = \pm I$, so that $u^5 = \pm 1$. As $u \ne \pm 1$, we have $u^4 \pm u^3 + u^2 \pm u + 1 = 0$. Let $\phi_1, \phi_2$ denote the roots of $x^2 \pm x - 1$. Then, $(u^2 + \phi_1 u + 1)(u^2 + \phi_2 u + 1) = 0$, so that $\tr(X) = \tr(D_u) = u + u ^{-1} = \frac{\pm 1 \pm \sqrt{5}}{2}$.
    \end{itemize}
This finishes the proof. \end{proof}

We remark that the existence of elements of certain orders in $\PSL(\Fp)$ is dependent on local conditions on $p$. In fact, Lemma \ref{order-trace} shows that $\PSL(\Fp)$ can only have an element of order four when $\sqrt{2} \in \Fp$, that is, $(\frac{2}{p}) = 1$. In this case, Theorem \ref{admissibles} shows that $\PSL(\Fp)$ contains a subgroup isomorphic to $S_4$, which in fact has an element of order four. Moreover, Lemma \ref{order-trace} shows that $\PSL(\Fp)$ contains an element of order five only if $\sqrt{5} \in \Fp$, that is, $(\frac{5}{p}) = 1$. Theorem \ref{admissibles} shows that in that case, $\PSL(\Fp)$ contains a subgroup isomorphic to $A_5$, thus in fact containing an element of order five.

\par Notice that elements of order two and three always exist in $\PSL(\Fp)$ as six divides the order of $\PSL(\Fp)$. Now, with Lemma \ref{order-trace} in hands, we are able to travel through pairs of elements in exceptional subgroups via Nielsen moves and relate them with a corresponding exceptional orbit. The key feature here is that the orders of elements in generating pairs of exceptional subgroups lie in a very small set. We now investigate each case separately.

\newpage

\subsection{Dihedral ($D_n$)} \hfill

\par Assume that $D_n \le \PSL(\Fp)$ for some value of $n$. By Theorem \ref{admissibles}, we know that $n$ divides $p \pm 1$. Consider the standard presentation of $D_n$ given by
\[D_n = \langle x, y \text{ } | \text{ } x^n = y^2 = 1, yxy = x^{-1} \rangle.\]
Every element of $D_n$ can be uniquely written as $x^{r}y^{s}$, with $0 \le r \le n - 1$ and $0 \le s \le 1$, and moreover $x^ry$ has order two for every value of $r$. Notice that if $g_1, g_2$ generate $D_n$ and $g_3 = g_1g_2$, then at most one element among $g_1, g_2, g_3$ is a power of $x$. Therefore at least two elements in the triple $(g_1, g_2, g_3)$ have order two, and by Lemma \ref{order-trace} must have trace zero as elements of $\SL(\Fp)$.

\par Therefore, given a pair $(A, B)$ in $\SL(\Fp)$ generating a dihedral group $D_n$ in $\PSL(\Fp)$, it follows that at least two elements in the tripe $\Tr(A, B)$ are zero. This triple lies in a very small orbit of size at most six, in which two of the coordinates are always zero. Moreover, after possibly applying a Nielsen move, we assume that $\Tr(A, B) = (t, 0, 0)$ for some $t \in \Fp$, which will be taken as a representative of the orbit. Notice that this triple can only appear at level $k$ when $t^2 - 2 = k$, that is,
\[\left(\frac{k+2}{p}\right) = 0, 1.\]

\par In the singular case $k = 2$, we have $\Tr(A, B) = (\pm 2, 0, 0)$ which implies that either $p$ divides the order of $A$ in $\SL(\Fp)$, or $A = \pm I$. By Theorem \ref{admissibles}, the former cannot happen otherwise $D_n$ would have an element of order $p$ but $p$ does not divide $2(p \pm 1)$. Hence $A = \pm I$, and $B^2 = \pm I$. Hence, the subgroup generated by $(A, B)$ in $\PSL(\Fp)$ is isomorphic to $D_1$, which is in fact an affine subgroup of $\PSL(\Fp)$.

\par It remains to prove that for every level $k \ne 2$ for which $k+2 = t^2$ is a square, there is a pair $(A, B)$ which generates a dihedral group $D_n$ in $\PSL(\Fp)$ and satisfies $\Tr(A, B) = (t, 0, 0)$. By Theorem $\ref{admissibles}$, we know that $D_{p\pm 1} \le \PSL(\Fp)$. Consider the presentation \mbox{of $D_{p\pm 1}$} with two generators $x, y$, and let $A, B$ be elements of $\SL(\Fp)$ which map to $x, y$ in $\PSL(\Fp)$. Notice that $A$ cannot be a parabolic element, as that would imply that its order in $\SL(\Fp)$ is either $1$, $2$, $p$, or $2p$, contradicting the fact that $x$ has order $p\pm 1$ in $\PSL(\Fp)$. Therefore, there is $u \in \Fpp$ such that $A$ is conjugate to the diagonal matrix $D_u$, either in $\SL(\Fp)$ or in the elliptic embedding $\EL(\Fp)$.

\par For $D_{p-1}$, $A$ will have order divisible by $p-1$, which implies $u \in \Fp$ is a generator of $\Fp^{\times}$. In particular, $D_u$ lies in $\SL(\Fp)$. For $D_{p+1}$, $A$ will have order divisible by $p+1$, hence $u \not \in \Fp$ is a generator of the set of solutions for $z^{p+1} = 1$ in $\Fpp$. In this case, $D_u$ lies in the elliptic embedding $\EL(\Fp)$. Therefore, for every $X \in \Fp$ non-parabolic, $X$ is conjugate to a power of $D_u$. Moreover, notice that for each $1 \le d < p \pm 1$, the subgroup $\langle x^d, y \rangle \le D_{p\pm 1}$ is again dihedral.

\par Therefore, $(A^d, B)$ generates a dihedral subgroup in $\PSL(\Fp)$, and as the image $x^dy$ of $A^dB$ has order two, we obtain $\Tr(A^d, B) = (t, 0, 0)$ for some $t \in \Fp$. Moreover, as \mbox{$d$ varies} for both groups $D_{p-1}, D_{p+1}$, the element $A^d$ ranges over all non-parabolic conjugacy classes. In particular, $A^d$ reaches all non-parabolic trace values. Thus, for each $t \ne 2$, there exists a pair matrices $(A_0, B)$ in $\SL(\Fp)$ which generates a dihedral group in $\PSL(\Fp)$. This finishes the classification of exceptional orbits associated with dihedral subgroups $D_n$.

\subsection{Tetahedral ($A_4$)} \hfill

\par Using at the standard way of writing permutation groups, notice that $A_4$ only contains elements of order one, two, and three. Let $(A, B)$ be a pair in $\SL(\Fp)$ which generates $A_4$ in $\PSL(\Fp)$. Then, the images of $A$, $B$, and $AB$ have orders two and three in $\PSL(\Fp)$. Hence by Lemma \ref{order-trace}, $\Tr(A, B)$ has coordinates in $\{0, \pm 1\}$.

\par Notice that $A_4$ is not a dihedral group, and by our previous argument at least two of the coordinates in $\Tr(A, B)$ are non-zero. Then, $\Tr(A, B)$ is of the form
\[(\pm 1, \pm 1, \pm 1),\text{ or }(\pm 1, \pm 1, 0).\]
\par Moreover, notice that $A_4$ is not an affine subgroup. In fact, for every generating pair $(A, B)$ of $A_4$ in $\PSL(\Fp)$, we have that $[A, B]$ has order two or three in $\PSL(\Fp)$, and so $\tr([A, B]) = 0, \pm 1$. In particular, $\tr([A, B]) \ne 2$, therefore $(A, B)$ is not an affine pair and cannot generate an affine subgroup.

\par We are left with a single orbit at level $k = 0$ containing $(1, 1, 0)$ and fifteen other triples. As $A_4$ is always admissible, this is the only exceptional orbit associated with $A_4$.

\subsection{Octahedral ($S_4$)} \hfill

\par Assume that $S_4$ is admissible in $\PSL(\Fp)$, or equivalently, \mbox{$\sqrt{2} \in \Fp$}. Notice that $S_4$ only contains elements of order one, two, three, and four. Let $(A, B)$ be a pair in $\SL(\Fp)$ which generates $S_4$ in $\PSL(\Fp)$. Then, $[A, B]$ has order two, three, or four in $\PSL(\Fp)$, and by Lemma \ref{order-trace}, $\tr([A, B]) \in \{0, \pm 1, \pm \sqrt{2}\}$. In particular, $\tr([A, B]) \ne 2$, hence $S_4$ is not an affine subgroup of $\PSL(\Fp)$.

\par Notice that the images of $A$, $B$, and $AB$ cannot have order one in $\PSL(\Fp)$, therefore the coordinates of $\Tr(A, B)$ belong to $\{0, \pm 1, \pm \sqrt{2}\}$. Moreover, by our previous argument combined with the fact that $S_4$ is not a dihedral group, at least two coordinates in $\Tr(A, B)$ are non-zero. By Theorem \ref{towers}, we also know that the exceptional orbits associated with $A_4$ and $S_4$ must be disjoint. Therefore, $\Tr(A, B)$ is of the form
\[(\pm \sqrt{2}, \pm \sqrt{2}, \pm \sqrt{2}),\text{ } (\pm \sqrt{2}, \pm 1, \pm 1), \text{ } (\pm \sqrt{2}, \pm \sqrt{2}, \pm 1), \text{ or } (\pm \sqrt{2}, \pm 1, 0).\]

\par The triples $(\pm \sqrt{2}, \pm \sqrt{2}, 0)$ were omitted because they lie at the affine level $k = 2$. Now, using a Vieta move, we obtain coordinates $\pm 2 \pm \sqrt{2}$ and $\pm 1 \pm \sqrt{2}$ from the first two types of triples. These values, however, do not lie in the set $\{0, \pm 1, \pm \sqrt{2}\}$, and therefore cannot be obtained from a generating pair of $S_4$ in $\PSL(\Fp)$. Similarly, a few triples of the third type lie in the orbit of a triple with a coordinate equal to $\pm 3$, and hence cannot be part of an exceptional orbit associated with $S_4$.

\par We are left with an orbit at level $k = 1$ containing $(\sqrt{2}, 1, 0)$ and thirty-five \mbox{other triples}. When $S_4$ is admissible, that is, when $\sqrt{2} \in \Fp$, this is the only exceptional orbit associated with $S_4$.

\subsection{Icosahedral ($A_5$)} \hfill

\par Assume that $A_5$ is admissible in $\PSL(\Fp)$, or equivalently, \mbox{$\sqrt{5} \in \Fp$}. Notice that $A_5$ contains elements of order one, two, three, and five, therefore by Lemma \ref{order-trace} it follows that $A_5$ is not an affine subgroup of $\PSL(\Fp)$. Let $(A, B)$ be a pair in $\SL(\Fp)$ which generates $A_5$ in $\PSL(\Fp)$. The coordinates of $\Tr(A, B)$ lie in $\{0, \pm 1, \frac{\pm 1 \pm \sqrt{5}}{2}\}$, and at least two of those coordinates are non-zero since $A_5$ is not a dihedral group. Moreover, Theorem \ref{towers} shows that the exceptional orbits associated with $A_5$ and $A_4$ must be disjoint.

\par We first show that every exceptional orbit associated with $A_5$ contains a triple with a zero coordinate. Assume by contradiction that this is not the case for a given orbit. Then, all coordinates in such orbit lie in $\{\pm 1, \frac{\pm 1 \pm \sqrt{5}}{2}\}$. Let $\phi = \frac{1 + \sqrt{5}}{2}$. By choosing the signs of $A$ and $B$, we can assume that the first two coordinates of $\Tr(A, B)$ lie in $\{1, \phi, \phi^{-1}\}$. The choice of signs only affects the sign of two coordinates of the triples, hence creates a bijective correspondence between the two orbits without affecting the condition on non-zero coordinates.

\par Let $(x, y, z) = \Tr(A, B)$. We now consider each case separately.
\begin{itemize}
    \item[1.] $(x, y) = (1, 1)$. In this case, $z = \frac{\pm 1 \pm \sqrt{5}}{2}$, otherwise this orbit is either affine or it coincides with the exceptional orbit of $A_4$. Then, $(1, 1, z) \mapsto (1, z- 1, z)$ via a Vieta move, therefore $z = \frac{1 \pm \sqrt{5}}{2}$. Then $z(z - 1) = 1$, and again $(1,z-1, z) \mapsto (0, z-1, z)$, contradicting the initial hypothesis on non-zero coordinates.
    \item[2.] $(x, y) = (1, \phi)$. By the previous case and by changing signs if needed, we can assume that no triple in this orbit contains two coordinates equal to $ \pm 1$. Consider the Vieta move $(1, \phi, z) \mapsto (1, \phi, \phi - z)$. Then, since $z, \phi - z \ne \pm 1$, the only solution is $z = \phi$, contradicting the condition on non-zero coordinates.
    \item[3.] $(x, y) = (1, \phi^{-1})$. This case is solved just like the previous one, replacing $\phi$ by $\phi^{-1}$.
    \item[4.] $(x, y) = (\phi, \phi)$. Using the previous cases, we can assume that no triple in this orbit contains a coordinate equal to $\pm 1$. Therefore, $z = \pm \phi^{\pm 1}$. Consider the Vieta move $(\phi, \phi, z) \mapsto (\phi, \phi, \phi^2 - z)$. Then $z = \phi$, and $\phi^2 - z = 1$, which cannot happen given the previous cases.
    \item[5.] $(x, y) = (\phi^{\pm 1}, \phi^{\pm 1})$. By all the previous cases, we can assume that no triple in this orbit contains a coordinate equal to $\pm 1$ or has two coordinates equal to $(\pm \phi, \pm \phi)$. Therefore, after choosing signs, we can assume this orbit has a triple where the first two coordinates are $(\phi^{-1}, \phi^{-1})$. This case is solved just like previous one, replacing $\phi$ by $\phi^{-1}$. This concludes the analysis of cases.
    
\end{itemize}

\par This proves that every exceptional orbit associated with $A_5$ contains a triple with a zero coordinate. Using this, we can find a triple in the exceptional orbit with its last coordinate equal to zero, and by using the Vieta move we can choose the sign of its non-zero coordinates. Hence, we can assume this triple is of the form
\[(\phi, \phi, 0),\text{ } (\phi^{-1}, \phi^{-1}, 0),\text{ } (\phi, \phi^{-1}, 0), \text{ } (\phi, 1, 0), \text{ or } (\phi^{-1}, 1, 0).\]
However, no triple in this orbit has a coordinate equal to $\phi^2$ or $\phi^{-2}$, which rules out the first two cases. We are left with three possible orbits of sizes seventy-two, forty, and forty, generated by the triples $(\phi, \phi^{-1}, 0), \text{ } (\phi, 1, 0)$, and $(\phi^{-1}, 1, 0)$. These triples lie at levels $1,$ $\phi$, and $-\phi^{-1}$, and therefore generate disjoint orbits. It remains to prove that these orbits in fact appear as the image of generating pairs of $A_5$ in $\PSL(\Fp)$.

\par Consider the presentation of $A_5$ given by $\langle x, y \text{ } | \text{ } x^5 = y^2 = (xy)^3 = 1\rangle$. First, \mbox{we prove} that the the orbit containing $(\phi, \phi^{-1}, 0)$ appears as an exceptional orbit associated with $A_5$. Let $z = yx^2y$ be an element of order five. Notice that $(x, z)$ generates $A_5$ as $xz^3x = xyxyx = y$. Moreover, $(xz)^2 = (xyx^2y)^2 = (xyx)(xyxyxy)(yxy) = (xy)^3 = 1$. Therefore, for $(A, B)$ in $\SL(\Fp)$ mapping to $(x, z)$ in $\PSL(\Fp)$, we have by Lemma \ref{order-trace} that $\Tr(A, B)$ belongs to the exceptional orbit containing $(\phi, \phi^{-1}, 0)$.

\par Next, we prove that both orbits containing $(\phi, 1, 0)$ and $(\phi^{-1}, 1, 0)$ appear as exceptional orbits associated with $A_5$. We start by taking a pair $(A, B)$ in $\SL(\Fp)$ mapping to $(x, y)$ in $\PSL(\Fp)$, and by choosing signs in $\SL(\Fp)$ we assume that $\tr(A) = \phi^{\pm 1}$. By Lemma \ref{order-trace}, $\Tr(A, B)$ belongs to either one these orbits. Let $u = yx^2y$ and $v = xyx^{-1}$ be elements in $A_5$ of order five and two. Notice that $u^3v = yxyxyx^{-1} = x^{-2}$, so that $(u^3v)^2 = x$ and $(u, v)$ generates $A_5$. Moreover, we have $uv = yxy(yxyxy)x^{-1} = yxyx^{-2} = (yxyx) x^2 = x^{-1}yx^2$, so that $(uv)^3 = x^{-1}yxyxyx^{2} = 1$. Observe that the pair $(BA^2B, ABA^{-1})$ maps to $(u, v)$ in $\PSL(\Fp)$, and therefore it also corresponds to one of the two exceptional orbits associated with $A_5$ mentioned above. On the other hand, as $B^2 = -I$, we have $\tr(BA^2B) = - \tr(A^2) = - \tr(A)^2 + 2$. Therefore, when $\tr(A) = \phi$ we have $\tr(BA^2B) = - \phi^{-1}$, and when $\tr(A) = \phi^{-1}$ we have $\tr(BA^2B) = \phi$. Thus, $\Tr(A, B)$ and $\Tr(BA^2B, ABA^{-1})$ lie in the two distinct orbits containing $(\phi, 1, 0)$ and $(\phi^{-1}, 1, 0)$.

\par Therefore, when $A_5$ is admissible, there are three exceptional orbits associated with it: one containing $(\phi, \phi^{-1}, 0)$ of size seventy-two, one containing $(\phi, 1, 0)$ of size forty, and one containing $(\phi^{-1}, 1, 0)$ of size forty. This completes our classification of exceptional orbits.

\subsection{Classification} \hfill

\par Using the results of this section, we now give a classification of all exceptional orbits:

\begin{thm}\label{orbits}
A list of all exceptional orbits, their corresponding groups, generators, levels, and sizes is given by the following table:
\begin{center}
\begin{table}[ht]
\caption{Exceptional Orbits}
\begin{tabular}{ c  c  c  c}
 Group  & Generator & Level & Size \\ 
 \hline
 \hline
 \multirow{2}{67pt}{Dihedral ($D_n$)}
 & $(0, 0, 0)$ & $-2$ & $1$ \\
 & $(t, 0, 0)$ & $t^2 - 2$ & $6$ \\
 \hline
 Tetahedral ($A_4$) & $(1, 1, 0)$ & $0$ & $16$\\
 \hline
 Octahedral ($S_4$) & $(\sqrt{2}, 1, 0)$ & $1$ & $36$ \\
 \hline
 \multirow{3}{78pt}{Icosahedral ($A_5$)}
 & $(\frac{1+\sqrt{5}}{2}, \frac{1-\sqrt{5}}{2}, 0)$ & $1$ & $72$ \\
 & $(\frac{1+\sqrt{5}}{2}, 1, 0)$ & $\frac{1+\sqrt{5}}{2}$ & $40$ \\
 & $(\frac{1-\sqrt{5}}{2}, 1, 0)$ & $\frac{1-\sqrt{5}}{2}$ & $40$ \\
 \hline
\end{tabular}
\label{exceptional}
\end{table}
\end{center}
\end{thm}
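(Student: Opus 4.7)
\par The plan is to assemble the theorem by consolidating the case analyses carried out in the four immediately preceding subsections. By Theorem \ref{singular} and the discussion following Theorem \ref{towers}, every non-singular Nielsen orbit in $\Fp^3$ determines a well-defined conjugacy type of subgroup of $\PSL(\Fp)$ generated by any pair $(A,B) \in \E(x,y,z)$, and such an orbit is exceptional exactly when this type is $D_n$, $A_4$, $S_4$, or $A_5$. The task thus reduces to, for each admissible exceptional group, producing a representative triple for each exceptional orbit, computing its level via the Markoff expression $k = x^2 + y^2 + z^2 - xyz - 2$, and computing the size of its Nielsen orbit.

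\par For the representatives I invoke the subsections directly. In the dihedral case, every generating pair of $D_n$ in $\PSL(\Fp)$ yields a trace triple Nielsen-equivalent to $(t,0,0)$ for some $t \in \Fp$, and conversely for each such $t \neq \pm 2$ a generating pair inside $D_{p\pm 1}$ realises it; the triple $(0,0,0)$ is fixed by $r$, $s$, and $t$ and contributes the single orbit at level $-2$. For $A_4$, $S_4$, and $A_5$, the combined restrictions of Lemma \ref{order-trace}, the Vieta-consistency arguments, and the exclusion of the affine level $k=2$ leave exactly one representative for $A_4$, namely $(1,1,0)$; exactly one for $S_4$, namely $(\sqrt{2},1,0)$; and exactly three for $A_5$, namely $(\frac{1+\sqrt{5}}{2},\frac{1-\sqrt{5}}{2},0)$, $(\frac{1+\sqrt{5}}{2},1,0)$, and $(\frac{1-\sqrt{5}}{2},1,0)$. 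The corresponding levels follow at once from the Markoff expression.

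\par The main computational step is to confirm the orbit sizes $1$, $6$, $16$, $36$, $72$, $40$, $40$. For the dihedral orbits, the three coordinate permutations of $(t,0,0)$ together with the Vieta move $(t,0,0)\mapsto(-t,0,0)$ produce six distinct triples when $t\neq 0$ and a single fixed triple when $t=0$, and Vieta applied in a slot occupied by a zero leaves the triple unchanged, so no further ones arise. For the $A_4$ orbit of $(1,1,0)$, direct enumeration yields the twelve triples $(\pm 1,\pm 1,0)$ with some zero slot, together with the four triples $(a,b,c)\in\{\pm 1\}^3$ satisfying $abc=1$ (the constraint imposed by $\M_0$), totalling sixteen. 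Parallel enumerations with coordinates drawn from $\{0,\pm 1,\pm\sqrt{2}\}$ for $S_4$ and from $\{0,\pm 1\}\cup\{\pm\frac{1\pm\sqrt{5}}{2}\}$ for $A_5$ give the remaining sizes.

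\par I expect the hard step to be the $A_5$ enumeration: one must track a large collection of triples and verify that every Vieta move from a listed triple lands on another listed triple, so that each of the three candidate orbits is genuinely closed and genuinely disjoint from the others. Disjointness across different group types, in particular between the $S_4$ orbit and the first $A_5$ orbit (both at level $1$), is not a coordinate computation but an immediate consequence of Theorem \ref{towers}, which promotes the $\PSL(\Fp)$-conjugacy type of the generated subgroup to a Nielsen invariant of the orbit. Once the representatives, levels, and orbit sizes are all verified, Table \ref{exceptional} is assembled directly.
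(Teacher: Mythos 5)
Your proposal takes essentially the same route as the paper: Theorem \ref{orbits} is a consolidation of the case-by-case analysis in the four preceding subsections (dihedral, tetrahedral, octahedral, icosahedral), each of which produces the claimed representatives, levels, and orbit sizes via Lemma \ref{order-trace} and the elimination of Vieta-incompatible candidates. You correctly identify that the conjugacy type of the generated subgroup is a Nielsen invariant of the orbit (by Theorem \ref{towers} and the subsequent discussion), which is what makes the orbits associated with different groups automatically disjoint even when they share a level. Your explicit enumeration of the size-$16$ tetrahedral orbit and the size-$6$ dihedral orbit, and your honest flagging that the icosahedral sizes require a similar but more tedious enumeration, is in fact slightly more detailed than the paper itself, which asserts the sizes of the $A_4$, $S_4$, $A_5$ orbits without writing out the triple-counting; so there is no gap, only an exercise you have correctly scoped.
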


\par Notice that the orbits associated with the octahedral and icosahedral groups appear if and only if $\sqrt{2} \in \Fp$ and $\sqrt{5} \in \Fp$. Via Table \ref{exceptional}, we can find the union of all exceptional orbits at a given level $k$, which we denote by $\ee_k$.

\par Remarkably, the exceptional orbits found above are exactly the finite orbits for the equations $\M_k$ when viewed over $\mathbb{C}$. This result was proved by Dubrovin and Mazzocco in \cite{dubrovin2000monodromy} and provides an interesting perspective on the previous classification. By Chebotarev's theorem, every finite algebraic orbit of $\M_k$ over $\mathbb{C}$ must appear for infinitely many primes. The classification above shows that the converse is essentially true, in the sense that the only orbits that appear for infinitely many primes are in fact the image of orbits over $\mathbb{C}$.

\par We remark that the only exceptional orbit at the singular level $k = 2$ is associated with $D_1 \cong \Z/2\Z$, which therefore characterizes the only intersection between the classes of affine and exceptional subgroups of $\PSL(\Fp)$. With this information in hands, we proceed to make a precise statement of the problem of strong approximation.

\section{{\it Goal:} Strong Approximation} \label{strong}

\par We have associated a type of subgroup of $\PSL(\Fp)$ to each orbit in $\M_k$, and for $k \ne 2$, Theorem \ref{orbits} shows that all but a very small list of finite orbits correspond to the projective subgroup $\PSL(\Fp)$, so that towers over that orbit only contain generating pairs of $\SL(\Fp)$. As previously noted, the spirit of strong approximation lies in the idea that there is always one large orbit at each level $k \ne 2$ which is associated with generating pairs. Using \mbox{Table \ref{exceptional}}, we give this statement a precise formulation in Section \ref{conjs}.

\par In Section \ref{connecting_towers}, we prove that the Q-classification is equivalent to strong approximation when $p \equiv 3 \pmod{4}$. In Section \ref{divisibility}, we discuss some recent developments towards proving strong approximation as well as divisibility conditions on the size of generating orbits that imply strong approximation.

\subsection{Conjectures}\label{conjs} \hfill

\par Notice that the existence of a large orbit at each level $k \ne 2$ has a natural interpretation in terms of generating pairs of $\SL(\Fp)$. This idea was introduced at the end of Section \ref{Nielsen}, and using the machinery developed so far we can now reinterpret it in more familiar terms. In fact, Theorem \ref{towers} shows that each tower over a generating orbit contains at most two Nielsen orbits.

\par Assuming the hypothesis of strong approximation, there is a single generating orbit of triples at each level $k$, so that there are at most two Nielsen orbits of generating pairs $(A, B)$ for which $\tr([A, B]) = k$. This motivates the Q-classification conjecture mentioned at the end of Section \ref{Nielsen}, proposed by McCullough and Wanderley (c.f. \cite{mccullough2013nielsen}). In group theoretical terms, it states that two generating pairs of $\SL(\Fp)$ are in the same Nielsen orbit if and only if their commutators lie in the same extended conjugacy class of $\SL(\Fp)$.

\par As noted at the end of Section \ref{Nielsen}, Nielsen moves preserve extended conjugacy classes. The Q-classification conjecture states that, in $\SL(\Fp)$, this is a complete invariant of the Nielsen orbits. We recall that for $k \ne \pm 2$, there is a single conjugacy class consisting of all elements of trace $k$ in $\SL(\Fp)$. As $\tr(A) = \tr(A^{-1})$, this conjugacy class is also an extended conjugacy class of $\SL(\Fp)$.

\par For $k = -2$, there are two conjugacy classes different from $\{ - I \}$ containing all non-central elements of trace $-2$. When $p \equiv 1 \pmod{4}$, these classes are closed under inverse, hence yielding two extended conjugacy classes. When $p \equiv 3 \pmod{4}$, these classes are complementary under inverse and form one extended conjugacy class. For $k = 2$, \mbox{Lemma \ref{trace_thm}} shows that there is no generating pair of $\SL(\Fp)$ with commutator trace equal to $2$. Thus, the Q-classification conjecture can be stated as follows:

\begin{conj}[Q-Classification]\label{Q} There is a single orbit of generating pairs at level \mbox{$k \ne - 2$}. Moreover, for $p \equiv 3 \pmod{4}$, there is also a single orbit of generating pairs at level $-2$, and for $p \equiv 1 \pmod{4}$, there are two such orbits at that level.
\end{conj}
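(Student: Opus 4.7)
The plan is to approach Conjecture \ref{Q} by reducing the count of Nielsen orbits of generating pairs at a fixed level $k$ to (i) the count of generating orbits of triples at level $k$, and (ii) the number of Nielsen orbits above each such triple orbit, then classifying the latter using the extended conjugacy class of the commutator as a Nielsen invariant.

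First, I would establish strong approximation on triples: for $k \ne 2$, there is a single generating orbit in $\M_k(\Fp)$. Since Nielsen moves on pairs project through $\Tr$ to Nielsen moves on triples, every Nielsen orbit of generating pairs maps into a unique generating orbit of triples, so uniqueness on the triple side collapses the problem to counting Nielsen orbits inside a single union of towers. For $k = -2$ this follows from Chen's divisibility result, and in general it should follow by combining the Bourgain--Gamburd--Sarnak bound (a main orbit of size $|\M_k(\Fp)| - O(p^{\epsilon})$) with Theorem \ref{orbits} (the absolutely bounded list of exceptional orbits) and Chen-type divisibility to rule out a second competing generating orbit.

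Next, fix a generating orbit of triples. By Theorem \ref{towers}, each tower $\E(x,y,z)$ above it contains exactly two $\SL(\Fp)$-conjugacy classes of pairs, and by Corollary \ref{haha} each $\SL(\Fp)$-class contributes at most one Nielsen orbit, so the total Nielsen orbit count above the triple orbit is at most $2$. To decide whether the two potential Nielsen orbits coincide, I would use $\ee([A,B])$, the extended conjugacy class of the commutator, which is preserved by every Nielsen move. Representatives of the two $\SL(\Fp)$-classes in a single tower are conjugate by some $X \in \GL(\Fp) \setminus \SL(\Fp)$, so their commutators are $X$-conjugate; this lands them in the same $\SL(\Fp)$-conjugacy class exactly when $X$ normalizes the class of $[A,B]$. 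For $k \ne \pm 2$ this is automatic, since there is a unique $\SL(\Fp)$-conjugacy class of trace $k$. For $k = -2$ the two non-central $\SL(\Fp)$-classes of trace $-2$ are interchanged by $X$, and they fuse into a single extended conjugacy class precisely when $p \equiv 3 \pmod{4}$. These three cases match exactly the counts $1$, $1$, $2$ asserted in the conjecture.

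The hardest step will be upgrading this upper bound to equality, i.e., proving the converse direction of Q-classification inside a single tower-union: two generating pairs whose commutators lie in the same extended conjugacy class must actually be Nielsen equivalent. Concretely, one must construct an explicit Nielsen bridge realizing conjugation by some $X \in \GL(\Fp) \setminus \SL(\Fp)$ whenever $\ee([A,B])$ agrees across the two $\SL(\Fp)$-classes of a tower. Since such an $X$ lies outside $\langle A, B \rangle = \SL(\Fp)$, Corollary \ref{haha} does not supply this move directly, and any bridge must route through other towers in the triple orbit via compositions of $r$, $s$, and $t$. Combined with the still-open strong approximation for general $k$, this combinatorial construction of bridges is the main obstacle, and the expected strategy is to exploit the equivalence between Q-classification and strong approximation for $p \equiv 3 \pmod{4}$ so that progress on divisibility or orbit-size bounds can be ported directly to the pair-theoretic statement.
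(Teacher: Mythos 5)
This statement is a \emph{conjecture} (the Q-classification of McCullough and Wanderley, restated in orbit-counting form), and the paper does not prove it; it only derives the precise shape of the statement by counting extended conjugacy classes of trace $k$ in $\SL(\Fp)$ (one class for $k \ne \pm 2$; for $k = -2$, two when $p \equiv 1 \pmod 4$ and one when $p \equiv 3 \pmod 4$, while $k=2$ is excluded by Lemma \ref{trace_thm}), and then proves a conditional equivalence with strong approximation when $p \equiv 3 \pmod 4$ via Theorems \ref{mcwan} and \ref{equivalence}. Your reduction is faithful to that framework: strong approximation on triples plus Theorem \ref{towers} and Corollary \ref{haha} caps the number of Nielsen orbits of generating pairs at level $k$ by two, and the extended conjugacy class of the commutator is the invariant that decides whether the cap is attained, with your three-way case analysis reproducing exactly the counts $1$, $1$, $2$ in the statement.

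However, you have not closed either of the two gaps you yourself flag, and both are genuinely open, so this cannot be graded as a proof. First, strong approximation for general $k \ne 2$ is known only for $k = -2$ (Chen); for other levels the Bourgain--Gamburd--Sarnak bound leaves up to $p^{\epsilon}$ points unaccounted for, and Theorem \ref{chen} gives divisibility strong enough to conclude only for special $k$, so a second generating orbit of triples cannot yet be excluded. Second, the ``bridge'' realizing conjugation by $X \in \GL(\Fp) \setminus \SL(\Fp)$ inside a single Nielsen orbit is exactly what the paper constructs only under the hypotheses of Theorem \ref{equivalence} ($p \equiv 3 \pmod 4$ and $2-k$ a nonzero square, using a triple with $xy - z = z$ from Proposition \ref{weyl}); when $p \equiv 1 \pmod 4$ and $2-k$ is a square the obstruction computed in that proof ($k-2$ being a square) vanishes, and no construction is known. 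So your proposal is a correct organization of the problem and of the known partial results, but it is a research plan for an open conjecture, not a proof.
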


\par We refer to the Q-classification at level $k$ as the previous conjecture restricted to level $k$. For $k \ne -2$ or $p \equiv 3 \pmod{4}$, the Q-classification conjecture clearly implies that there is a single generating orbit at level $k$. For $k = -2$ and $p \equiv 1 \pmod{4}$, the \mbox{same conclusion} still holds as each tower at that level contains pairs in both extended conjugacy classes of trace $-2$, which cannot be in the same Nielsen orbit. This last claim follows from the fact that for $X = \text{diag}(1, t) \in \GL(\Fp)$, $t$ non-square in $\Fp$, and $(A, B)$ a pair at level $k = -2$, the pairs $(A, B)$ and $(A^X, B^X)$ have different extended conjugacy classes. Therefore, the Q-classification implies that there is a single generating orbit at each level $k \ne 2$.

\par The idea that there is a large orbit in each non-singular level was familiar to Bourgain, Gamburd, and Sarnak, who made major contributions to the problem (c.f. \cite{bourgain2016markoff}). Their main result shows that at level $k = -2$ and for most primes, there is a single orbit $\cc_{-2}$ referred to as the cage in addition to $\{(0, 0, 0)\}$. Moreover, even when this fails, there are effectively at most $p^{\epsilon}$ triples outside $\cc_{-2}$.

\par In \cite{announcement}, Bourgain, Gamburd, and Sarnak announced that the same result extends to other non-singular levels via similar methods. Their result captures the essence that there is a single generating orbit at each level $k \ne 2$, which we state as follows:

\begin{thm}\label{sarnak}
For $k \ne 2$, the set of solutions for $\M_k$ contains a large orbit $\mathcal{C}_k$ such that \[|\M_k(\Fp) \setminus \mathcal{C}_k| < p^{\epsilon}.\]
\end{thm}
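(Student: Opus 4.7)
The plan is to adapt the strategy of Bourgain-Gamburd-Sarnak \cite{bourgain2016markoff}, developed originally for $k = -2$ and announced for general $k$ in \cite{announcement}. The central object is the Markoff graph $G_k$ whose vertices are $\M_k(\Fp)$ and whose edges are the Nielsen moves $r$, $s$, $t$; the goal is to exhibit a single connected component $\cc_k$ with $|\M_k(\Fp) \setminus \cc_k| < p^{\epsilon}$, so that every other orbit is absolutely small.

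I would first reduce to orbits of projective type. By Theorem \ref{orbits}, the exceptional orbits contribute only an absolutely bounded number of points, so any candidate orbit $\Orb$ whose size grows unboundedly with $p$ must be associated with $\PSL(\Fp)$ itself; by Theorem \ref{towers}, the towers above such an $\Orb$ consist of generating pairs of $\SL(\Fp)$. It therefore suffices to bound, by $p^{\epsilon}$, the total mass of all generating orbits other than the largest one.

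The core technical step is the \emph{rotation trick}. Fixing a non-parabolic second coordinate $y = v + v^{-1}$, the composition $\sigma = s \circ t$ preserves the fiber $\{y = \mathrm{const}\}$ of $\M_k(\Fp)$ and acts there as an affine map conjugate to multiplication by $v^2$ on $\Fp^\times$ (or on the norm-one subgroup of $\Fpp^\times$ in the elliptic case). The $\sigma$-orbit of $(x, y, z)$ thus has length $\ord(v^2)$, and after a Nielsen permutation one obtains a second rotation in a different coordinate. Sum-product and character-sum estimates in $\Fp$ of Bourgain type, applied to the semigroup generated by these two rotations and the nonlinear Vieta involution that intertwines them, should then force any Markoff-invariant subset of size $< p^{1-\delta}$ to be contained in a proper algebraic subvariety of $\M_k$; a Bezout bound would then reduce the count to $O(p^{\epsilon})$.

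The hardest part will be this expansion input. One has to rule out the possibility that both rotations have small multiplicative order simultaneously for many $(x, y, z) \in \Orb$, which requires counting Markoff triples with prescribed coordinate orders and concatenating several Nielsen moves to amplify the mixing. The hypothesis $k \ne 2$ enters crucially through Lemma \ref{normal}: it prevents the towers above $\Orb$ from collapsing into affine pairs and hence guarantees that the rotations produced by $\sigma$ are nontrivial. A final ``cage'' argument, modelled on \cite{bourgain2016markoff}, would then patch the various rotation orbits into a single global component $\cc_k$, after which Theorem \ref{orbits} absorbs the remaining exceptional contribution into the $p^{\epsilon}$ error.
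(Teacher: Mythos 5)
You should first note that the paper does not prove this statement at all: Theorem \ref{sarnak} is imported verbatim from Bourgain, Gamburd, and Sarnak (\cite{bourgain2016markoff} for $k=-2$ and the announcement \cite{announcement} for general $k$), so there is no internal argument to compare yours against. Your outline does correctly identify the strategy of that work --- the rotation $\sigma = s\circ t$ acting on a conic fiber as multiplication by $u^2$ of order $\ord(u^2) \mid p\mp 1$, the use of additive-combinatorial growth to force every orbit to be large, and a final patching step --- so as a road map it points in the right direction. (A small indexing slip: with the paper's conventions $t$ fixes the first two coordinates and $s$ swaps the last two, so $s\circ t$ preserves the fiber $\{x=\mathrm{const}\}$, not $\{y=\mathrm{const}\}$.)

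As a proof, however, the proposal has a genuine gap: every decisive step is deferred. The assertions that ``sum-product and character-sum estimates \ldots should then force any Markoff-invariant subset of size $<p^{1-\delta}$ to be contained in a proper algebraic subvariety,'' and that ``a final cage argument \ldots would then patch the various rotation orbits into a single global component,'' are not reductions to known lemmas --- they \emph{are} the theorem, and constitute the entire technical content of \cite{bourgain2016markoff} (the opening, middle, and end games, each requiring delicate analysis of coordinates of small multiplicative order, Bourgain's results on exponential sums, and a lifting argument to connect orbits of size $>p^{1-\delta}$). Moreover, the source of the $p^{\epsilon}$ loss is misattributed: by Theorem \ref{orbits} the exceptional orbits contribute only $O(1)$ points, so they are not what the $p^{\epsilon}$ absorbs; the possible exceptional set in Theorem \ref{sarnak} consists of large \emph{generating} orbits that the endgame fails to connect (e.g.\ when $p^2-1$ is very smooth), which is exactly the part of the argument your sketch leaves blank. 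If you intend to rely on Bourgain--Gamburd--Sarnak, the honest proof is a citation, as the paper gives; if you intend to reprove it, the expansion input must actually be supplied.
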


\par The orbit $\cc_k$ is referred to as the cage at level $k$. In Section \ref{divisibility}, we show that $\M_k(\Fp)$ has approximately $p^2$ triples, so that most triples of $\M_k(\Fp)$ lie in $\cc_k$. We remark that this result is effective on $\epsilon$.

\par As strong approximation expects a single generating orbit at each level $k \ne 2$, we can classify these orbits by classifying their complement in $\M_k(\Fp)$. In fact, each non-singular level $\M_k(\Fp)$ is the union of generating orbits and exceptional orbits. Keeping the previous notation, $\ee_k$ denotes the union of all exceptional orbits at level $k$, as given by Theorem \ref{main}. Then, strong approximation can be analytically formulated as follows:

\begin{conj}[Strong Approximation]
For each level $k \ne 2$, there is a single orbit $\mathcal{C}_k$ such that \[\M_k(\Fp) = \mathcal{C}_k \sqcup \mathcal{E}_k.\]
\end{conj}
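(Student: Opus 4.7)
The strategy is to identify $\cc_k$ with the cage from Theorem \ref{sarnak} and rule out any orbit not already accounted for by Theorem \ref{orbits}. That theorem already provides $\cc_k$ as a single Nielsen orbit containing all but at most $p^\epsilon$ triples of $\M_k(\Fp)$, so the content to prove is $\M_k(\Fp) \setminus \cc_k = \ee_k$. The inclusion $\ee_k \subseteq \M_k(\Fp) \setminus \cc_k$ is immediate: distinct orbits are disjoint, and each exceptional orbit has size bounded by an absolute constant independent of $p$ by Theorem \ref{orbits}, so it cannot coincide with $\cc_k$ once $p$ is large. The substance of the conjecture lies in the reverse inclusion.

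To establish it, let $\Orb$ be any orbit distinct from $\cc_k$, so that $|\Orb| < p^\epsilon$ by Theorem \ref{sarnak}. Since $k \ne 2$, no triple in $\Orb$ is singular, and Theorem \ref{singular} rules out the affine group type for pairs in the towers above $\Orb$. By the trichotomy of Section \ref{subgroup_class}, any pair in a tower over $\Orb$ must therefore generate either the projective subgroup $\PSL(\Fp)$ or an admissible exceptional subgroup. In the exceptional case, the dictionary between orbits and subgroup types developed after Theorem \ref{towers} places $\Orb$ in the list of Theorem \ref{orbits}, hence in $\ee_k$. Thus only the projective case needs to be excluded.

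The plan is to invoke a divisibility condition: every generating orbit of $\M_k(\Fp)$ has size divisible by $p$. Granting this, a generating $\Orb$ with $|\Orb| < p^\epsilon < p$ (for any fixed $\epsilon < 1$ and $p$ large) would have size a positive multiple of $p$ strictly less than $p$, an impossibility. For $k = -2$ this divisibility is Chen's theorem, proved by producing a $\Z/p\Z$-action on each generating orbit via Nielsen moves tailored to the unipotent structure of commutators of trace $-2$. For arbitrary $k$ the trace-$-2$ scaffolding is unavailable, and constructing such a $p$-periodic Nielsen symmetry uniformly across non-singular levels is the main obstacle; this is exactly the object of the divisibility conjecture in Section \ref{divisibility}. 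Once the divisibility is secured level by level, the decomposition $\M_k(\Fp) = \cc_k \sqcup \ee_k$ and the uniqueness of $\cc_k$ follow immediately from Theorems \ref{sarnak} and \ref{orbits}.
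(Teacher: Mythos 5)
This statement is a \emph{conjecture}; the paper does not prove it, so there is no proof to compare against. What you have written is a conditional reduction, and you say as much yourself: the argument bottoms out at the divisibility conjecture of Section \ref{divisibility}, which is also open. So you have not produced a proof, and neither does the paper --- the paper uses precisely the reduction you sketch (Theorem \ref{sarnak} to cap the complement of the cage at $p^\epsilon$, Theorem \ref{singular} to eliminate affine pairs for $k \ne 2$, Theorem \ref{orbits} to identify the exceptional orbits with $\ee_k$, and a divisibility hypothesis to kill any further small generating orbit) as the \emph{motivation} for its final conjecture, not as a proof.

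One substantive inaccuracy is worth flagging. You hypothesize that ``every generating orbit of $\M_k(\Fp)$ has size divisible by $p$.'' This is Chen's theorem only at $k = -2$. It is not the paper's divisibility conjecture for general $k$: there the conjectured modulus is $p - \bigl(\frac{k^2-4}{p}\bigr)$, i.e.\ $p \pm 1$ for $k \ne \pm 2$, not $p$. The expected cage sizes bear this out --- e.g.\ $(p\pm 1)^2$ or $(p-1)(p+5)$ for generic hyperbolic/elliptic $k$, none of which is a multiple of $p$. Moreover the paper restricts that conjecture to \emph{generic} levels and explicitly states that the divisibility fails at $k = 0, 1, \frac{1 \pm \sqrt{5}}{2}$, so a ``level by level'' appeal to a uniform $p$-divisibility or even $p\pm 1$-divisibility would not close the argument at those levels; one would instead need Chen's Theorem \ref{chen} directly, whose conclusion is level-dependent and does not always yield a modulus larger than $p^\epsilon$. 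The remaining bookkeeping in your outline (disjointness of orbits, exclusion of the affine case via Theorem \ref{singular}, and the orbit/subgroup dictionary of Section \ref{macbeath}) is sound, but the missing divisibility input is exactly where the problem remains open.
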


\par In other words, the cage $\cc_k$ is expected to be $\M_k(\Fp) \setminus \mathcal{E}_k$. As for the Q-classification, we refer to strong approximation at level $k$ as the previous conjecture restricted to level $k$.

\par Our next goal is to study the relationship between the Q-classification and strong approximation. We have already noted that the Q-classification implies strong approximation. On the other hand, Theorem \ref{towers} tells us that the converse might also be possible, as connectedness on the base $\mathcal{C}_k = \M_k(\Fp) \setminus \mathcal{E}_k$ implies that there are at most two Nielsen orbits of generating pairs at that level. In order to show that the Nielsen orbit is unique for most cases, we now attempt to connect towers via Nielsen moves.
\newpage

\subsection{Connecting towers}\label{connecting_towers} \hfill

\par We now focus on the orbits of each tower $\E(x, y, z)$. By Theorem \ref{towers} and \mbox{Corollary \ref{haha}}, $\E(x, y, z)$ contains at most two orbits. The goal of this section is to prove that all towers over the cage $\mathcal{C}_k$ contain a single orbit when $p \equiv 3 \pmod{4}$, thus reducing the Q-classification conjecture to strong approximation. In order to do that, we will use special triples at level $k$ which naturally induce a connection in the tower $\E(x, y, z)$. The existence of such triples is given by following result:

\begin{prop}\label{weyl}
Let $k \in \Fp$ be such that $\left(\frac{2-k}{p}\right) = 1$. Then, the equation \[\M_k: x^2 + y^2 + z^2 - xyz - 2 = k\] has at least $\frac{p}{16}$ solutions such that $x$ and $y$ are hyperbolic and $xy - z= z$.\end{prop}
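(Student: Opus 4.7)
My plan is to substitute away $z$, reducing to a counting problem for $(x, y)$, and then estimate the number of valid values of $u = x^2 - 4$ via a standard cubic character sum.

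First, the constraint $xy - z = z$ gives $z = xy/2$ (valid since $p$ is odd). Substituting into $\M_k$ and simplifying yields $(x^2 - 4)(y^2 - 4) = 4(2-k)$. Setting $u = x^2 - 4$, the companion value $v := y^2 - 4 = 4(2-k)/u$ is automatically a nonzero square whenever $u$ is, by the hypothesis $\chi(2-k) = 1$ (where $\chi$ denotes the quadratic character mod $p$). Thus hyperbolicity of both $x$ and $y$ reduces to $\chi(u) = 1$, while the existence of $x, y \in \Fp$ further requires $\chi(u+4) = 1$ and $\chi(u + 2 - k) = 1$ (the latter because $v + 4 = 4(u + 2 - k)/u$). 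Each admissible $u$ then produces exactly four solutions $(\pm x, \pm y, xy/2)$, so it suffices to show that
\[ N := \#\{ u \in \Fp : \chi(u) = \chi(u+4) = \chi(u + 2 - k) = 1 \} \ge p/64. \]

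Next, I would apply the standard identity $\mathbf{1}[\chi(t) = 1] = (1 + \chi(t))/2$ (valid off the three zeros $u \in \{0, -4, k-2\}$, contributing an $O(1)$ boundary correction) and expand the triple product. The single-factor character sums $\sum_u \chi(u + a)$ vanish; each two-factor sum $\sum_u \chi((u+a)(u+b))$ with $a \ne b$ equals $-1$; and the Weil bound gives $|\sum_u \chi(u(u+4)(u + 2 - k))| \le 2\sqrt{p}$, provided the three linear factors are distinct, i.e., $k \notin \{2, -2\}$. Combining these contributions yields $N \ge (p - 3 - 2\sqrt{p})/8 - O(1)$, which comfortably exceeds $p/64$ for all sufficiently large $p$, and hence $4N \ge p/16$.

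The one genuine subtlety is the degenerate case $k = -2$, where $4 = 2 - k$ and the cubic collapses to $u(u+4)^2$, making the Weil hypothesis fail. In this case the problem reduces to $\#\{u : \chi(u) = \chi(u+4) = 1\}$, which the analogous two-factor argument bounds below by $(p - 5)/4$, far more than needed. The other degeneracy $k = 2$ is excluded automatically by the hypothesis $\chi(2-k) = 1$, and finite exceptions at small $p$ can be verified by direct inspection. The main obstacle is thus not the character-sum estimate itself but the careful bookkeeping of the boundary terms and the separate treatment of $k = -2$.
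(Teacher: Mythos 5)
Your proposal is correct, and it shares the paper's skeleton: substitute $z = xy/2$ to reduce $\M_k$ to $(x^2-4)(y^2-4) = 4(2-k)$, then count solutions via quadratic character sums, with the square-root cancellation ultimately coming from a Hasse/Weil bound for a cubic. The intermediate reduction, however, is genuinely different. The paper parametrizes hyperbolic traces as $x = u+u^{-1}$, sets $w = u - u^{-1}$, and reduces to the two simultaneous conditions $\left(\frac{w^2+4}{p}\right) = \left(\frac{w^2+t^2/4}{p}\right) = 1$ with $4(2-k) = t^2$; these are handled by Corollary \ref{c1} and Proposition \ref{squares}, the latter being a Gauss-sum computation that terminates in the point count of the elliptic curve $\zeta^2 = \eta(\eta+1)(\eta+c)$. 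You work directly with $u = x^2-4$ and the three conditions that $u$, $u+4$, and $u+2-k$ all be nonzero squares, expanding the product of indicators and invoking the Weil bound for $\sum_u \left(\frac{u(u+4)(u+2-k)}{p}\right)$; this consumes the same analytic input but skips the $w$-parametrization and the separate two-variable lemma, at the cost of the boundary bookkeeping you flag. One genuine point in your favor: you isolate the degeneracy $k=-2$, where the cubic acquires a double root. In the paper's argument this is precisely the excluded case $a=b$ (i.e.\ $4 = t^2/4$) in Proposition \ref{squares}, and the proof of Proposition \ref{weyl} does not remark on it; your fix (only one nontrivial condition survives, so the single-condition count suffices) is the correct and easy one. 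Both arguments need $p$ larger than an absolute constant for the final numerical comparison, which you acknowledge explicitly and the paper leaves to its standing assumption on $p$.
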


\par Such triples $(x, y, z)$ at level $k$ with $x$ and $y$ hyperbolic and $xy - z = z$ will induce the desired connection within the tower $\E(x, y, z)$. This argument is presented in detail later in the section. For now, we focus on proving Proposition \ref{weyl}. As it turns out, this is equivalent to counting the number of points on certain curves over $\Fp$. This is done via two technical results. The first one is much simpler:

\begin{lemma}\label{square}
Let $c \in \Fp^{\times}$. Then, there are $p-1$ pairs $(x, y)$ such that \[x^2 + c = y^2.\]
\end{lemma}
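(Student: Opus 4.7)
The plan is to factor the difference of squares and reduce the count to a hyperbola count. Rewriting the equation as $(y-x)(y+x) = c$, I would introduce the change of variables $u = y - x$, $v = y + x$. Since $p$ is odd (indeed $p > 5$ by the standing assumption), the linear map $(x, y) \mapsto (y-x, y+x)$ is a bijection $\Fp^2 \to \Fp^2$, with inverse $x = (v-u)/2$, $y = (u+v)/2$.

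Under this bijection, the equation $x^2 + c = y^2$ becomes $uv = c$. Since $c \in \Fp^{\times}$, any solution must have $u \in \Fp^{\times}$, and for each such $u$ the value $v = c/u$ is uniquely determined. This yields exactly $p - 1$ pairs $(u, v)$, and hence exactly $p - 1$ pairs $(x, y)$, as claimed.

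There is no real obstacle; the only small point to keep in mind is the invertibility of $2$, which is why the hypothesis $p$ odd is needed for the change of variables to be a bijection rather than merely injective.
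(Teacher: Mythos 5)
Your proof is correct and follows essentially the same route as the paper: factor $y^2 - x^2$ via the linear change of variables to reduce to counting points on the hyperbola $uv = c$, which has exactly $p-1$ solutions. The paper uses $(z,w) = (x+y, x-y)$ giving $zw = -c$, a cosmetic sign difference only.
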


\begin{proof}
Let $(z, w) = (x + y, x - y)$. Then, the equation is equivalent to $zw = -c$, which has $p-1$ solutions $(z, w)$. Hence the original equation has $p-1$ solutions $(x, y)$.
\end{proof}

\begin{cor}\label{c1}
There are at least $\frac{p-1}{2}$ values of $x$ for which $\left(\frac{x^2 + c}{p}\right) = 1$.
\end{cor}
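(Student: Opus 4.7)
The plan is to use Lemma \ref{square} as a counting identity and then translate the count over pairs into a count over $x$-values by analyzing fiber sizes. By that lemma, the equation $y^2 = x^2 + c$ has exactly $p - 1$ solutions $(x, y) \in \Fp^2$. I would re-count these same solutions by fixing $x$ first and asking how many $y$ satisfy $y^2 = x^2 + c$: this number is $2$ when $x^2 + c$ is a non-zero square, $1$ when $x^2 + c = 0$, and $0$ when $x^2 + c$ is a non-residue.

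Let $N$ denote the number of $x \in \Fp$ with $\left(\frac{x^2+c}{p}\right) = 1$ and $M$ the number of $x$ with $x^2 = -c$. Summing the fiber sizes gives the identity $2N + M = p - 1$. Since $x^2 = -c$ has at most two solutions in $\Fp$ (none if $-c$ is a non-residue, two if $-c$ is a non-zero square), we have $M \le 2$, so $N \ge (p - 3)/2$. In the case $-c$ is a non-residue, $M = 0$ and the sharper bound $N \ge (p - 1)/2$ is immediate; the remaining case is off by a single unit, which is harmless for the qualitative applications later in Proposition \ref{weyl}.

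No serious obstacle is anticipated here; the only subtle point is remembering to exclude the degenerate fiber where $y = 0$, which contributes to $M$ rather than to $N$. The essential content is that the bijection $(x, y) \mapsto (x + y, x - y)$ used in Lemma \ref{square} converts the count on the conic $y^2 - x^2 = c$ into the trivial hyperbola count $zw = -c$, and after that the Legendre symbol bookkeeping is routine.
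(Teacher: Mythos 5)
Your fiber-counting argument is exactly the natural derivation from Lemma \ref{square}: re-count the $p-1$ points on $y^2 = x^2 + c$ by projecting to the $x$-axis, so that $2N + M = p-1$ where $N$ counts $x$ with $x^2 + c$ a nonzero square and $M$ counts $x$ with $x^2 = -c$. The paper leaves this corollary unproved, so there is no alternative argument to contrast with; yours is the intended one.

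More to the point, your careful accounting exposes a genuine off-by-one in the statement as printed. When $-c$ is a nonzero square we have $M = 2$, whence $N = (p-3)/2 < (p-1)/2$; a concrete counterexample to the stated bound is $p = 7$, $c = 6$, where $x^2 + 6$ is a nonzero square only for $x = 3, 4$, giving $N = 2$ while $(p-1)/2 = 3$. So the corollary should claim $N \geq (p-3)/2$ (or keep $(p-1)/2$ under the extra hypothesis that $-c$ is a non-residue). As you note, this slack is absorbed harmlessly in the only place the corollary is used: the proof of Proposition \ref{weyl} already discards a few more values of $w$ and only needs a bound of the form $cp$ for some fixed constant $c$, so the crude estimate $N \geq (p-3)/2$ serves equally well there. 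Your write-up is correct and, in this small respect, more precise than the paper.
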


\par This first result is used to deal with corner cases in the main argument. We now present the main result, which is a natural generalization of Lemma \ref{square}.

\begin{prop}\label{squares}
Let $a, b \in \Fp^{\times}$ be distinct. Let $N$ be the number of triples $(x, y, z)$ \mbox{such that}
\[\begin{cases}x^2 + a = y^2 \\ x^2 + b = z^2.\end{cases}\]
Then, $|N - (p-3)| \le 2 \sqrt{p}$.
\end{prop}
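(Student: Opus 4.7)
The plan is to recast $N$ as a character sum and then bound the only nontrivial piece by an application of the Weil / Hasse--Weil bound. Let $\chi$ denote the Legendre symbol on $\Fp$ with the convention $\chi(0)=0$. For any $c \in \Fp^\times$ and any $x \in \Fp$, the number of $y \in \Fp$ with $y^2 = x^2 + c$ is $1 + \chi(x^2 + c)$ (this formula remains valid even when $x^2 + c = 0$, since then the unique solution $y=0$ is correctly counted as $1$). Therefore
\[N = \sum_{x \in \Fp} \bigl(1 + \chi(x^2+a)\bigr)\bigl(1 + \chi(x^2+b)\bigr) = p + S_a + S_b + T,\]
where $S_c = \sum_x \chi(x^2+c)$ for $c \in \{a,b\}$ and $T = \sum_x \chi\bigl((x^2+a)(x^2+b)\bigr)$.

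The evaluations of $S_a$ and $S_b$ come immediately from Lemma \ref{square}. Indeed, $\sum_x(1+\chi(x^2+c)) = p + S_c$ counts the number of pairs $(x,y)$ with $y^2 = x^2+c$, which equals $p-1$; hence $S_a = S_b = -1$, and consequently $N = p - 2 + T$.

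The main step, and the only real obstacle, is showing $|T + 1| \le 2\sqrt{p}$. The idea is to interpret $T$ as the $\chi$-sum attached to the affine curve $\mathcal{H}: w^2 = f(x)$, where $f(x) = (x^2+a)(x^2+b)$. Because $a,b \in \Fp^\times$ are distinct, the four roots $\pm\sqrt{-a}, \pm\sqrt{-b}$ of $f$ (in $\overline{\Fp}$) are distinct, so $\mathcal{H}$ is a smooth affine curve of genus one. Counting $\Fp$-points on $\mathcal{H}$ the same way as before gives
\[\#\mathcal{H}(\Fp) = \sum_{x \in \Fp} \bigl(1 + \chi(f(x))\bigr) = p + T.\]
Since the leading coefficient of $f$ is $1$, the smooth projective completion $\overline{\mathcal{H}}$ has two $\Fp$-rational points at infinity, so $\#\overline{\mathcal{H}}(\Fp) = p + T + 2$. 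The Hasse--Weil bound applied to the genus-one curve $\overline{\mathcal{H}}$ gives $|\#\overline{\mathcal{H}}(\Fp) - (p+1)| \le 2\sqrt{p}$, that is,
\[|T + 1| \le 2\sqrt{p}.\]
Combining this with $N = p - 2 + T$ yields $|N - (p-3)| \le 2\sqrt{p}$, as required.

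The only delicate point is confirming that one is allowed to invoke Hasse--Weil rather than some more elementary argument; this is why it is essential that $a \ne b$ and $a,b \ne 0$, guaranteeing $f$ is separable and $\mathcal{H}$ is genuinely a smooth genus-one curve. If desired, one could instead realize $T$ as a twisted Kloosterman-type sum and estimate it directly, but the Weil bound is the cleanest route, and the bookkeeping for the two points at infinity is exactly what converts the bound $2g\sqrt{p} = 2\sqrt{p}$ on the projective count into the claimed bound on $N$.
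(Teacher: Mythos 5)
Your proof is correct, but it takes a genuinely different route from the paper's. The paper detects the two conditions with additive characters, writing $p^2N$ as a double exponential sum over $\alpha,\beta$, and then carries out a fairly long Gauss-sum computation (tracking the constants $\gamma_{\pm1}=\epsilon_p(\pm1)$ and substituting $\beta=\alpha\eta$) to arrive at the cubic Legendre sum $\sum_{\eta}\left(\frac{\eta(1+\eta)(a+\eta b)}{p}\right)$, which it identifies as $M_c-p$ for the Weierstrass-form elliptic curve $\zeta^2=\eta(\eta+1)(\eta+c)$, $c=a/b$, and finishes with Hasse. You instead detect the conditions with the multiplicative character directly, expand the product, evaluate the two linear terms $S_a=S_b=-1$ via Lemma \ref{square} exactly as the paper's Corollary \ref{c1} setup suggests, and are left with the quartic sum $T=\sum_x\chi\bigl((x^2+a)(x^2+b)\bigr)$, which you bound by Hasse--Weil on the genus-one curve $w^2=(x^2+a)(x^2+b)$. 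The two approaches are of course cousins --- the paper's substitution $\beta=\alpha\eta$ is secretly the change of variables taking the quartic model to the cubic one --- but yours is shorter and avoids all the $\epsilon_p$ bookkeeping. What it costs is a slightly more sophisticated input: you need the Hasse--Weil bound for a genus-one curve presented as $w^2=(\text{separable quartic})$, including the correct count of the two rational points at infinity on the smooth projective model (which you handle correctly, using that the leading coefficient $1$ is a square); the paper only ever invokes Hasse for an elliptic curve in the standard cubic form. Your checks that $a\ne b$ and $a,b\ne 0$ force the quartic to be separable, and that the formula $1+\chi(x^2+c)$ remains valid when $x^2+c=0$, are exactly the delicate points, and you have addressed both.
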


\begin{proof}
Define the function $\e(t) = \exp(\frac{2\pi i t}{p})$ for $t$ in $\Fp$. Consider the double sum
\[ \Lambda_{a, b} =  \sum_{\alpha, \beta} \sum_{x, y, z} \e(\alpha(x^2+a-y^2)) \e(\beta(x^2+b-z^2)),\] where the indices range over $\Fp$ unless otherwise stated. Notice that if $(x, y, z)$ is a solution for the system, the sum over $\alpha$ and $\beta$ contributes $p^2$ to the total sum, otherwise it contributes zero, as $\sum_{\gamma} \e(\gamma t) = 0$ for $t \ne 0$. Therefore $\Lambda_{a, b} = p^2N$.

\par We now compute $\Lambda_{a, b}$ by exchanging the order of the sums. Notice that
\[ \Lambda_{a, b} = \sum_{\alpha, \beta, x, y} \e(\alpha(x^2 + a - y^2)) \e(\beta(x^2 + b)) \sum_z \e(-\beta z^2).\]
Denote the quadratic Gauss sum by $\theta(c) = \sum_{w} \e(cw^2)$. We further reduce our \mbox{expression to} 
\begin{equation*} \begin{split}
\Lambda_{a, b} & = \sum_{\alpha, \beta, x, y} \e(\alpha(x^2 + a - y^2)) \e(\beta(x^2 + b)) \theta(-\beta) \\
& = \sum_{\alpha, \beta, x} \e(\alpha(x^2 + a))\e(\beta(x^2 + b)) \theta(-\beta) \sum_{y} \e(-\alpha y^2) \\
& = \sum_{\alpha, \beta, x} \e(\alpha(x^2 + a))\e(\beta(x^2 + b)) \theta(-\beta) \theta(-\alpha) \\
& = \sum_{\alpha, \beta} \e(\alpha a + \beta b) \theta(-\beta)\theta(- \alpha) \sum_x \e((\alpha + \beta)x^2) \\
& = \sum_{\alpha, \beta} \e(\alpha a + \beta b) \theta(-\beta) \theta(-\alpha) \theta(\alpha + \beta).
\end{split}
\end{equation*}

\par Notice that $\theta$ only takes three values since $\theta(c_1) = \theta(c_2)$ whenever $(\frac{c_1}{p}) = (\frac{c_2}{p})$. Hence, we can define $\theta_0, \theta_{\pm 1}$ such that $\theta(c) = \theta_{(\frac{c}{p})}$ for all $c \in \Fp$. Moreover, let $\gamma_c = \theta_c/\sqrt{p}$. We now go back to the expression of $\Lambda_{a, b}$. As $\theta(c) = \theta_{(\frac{c}{p})} = \sqrt{p} \gamma_{(\frac{c}{p})}$, we obtain
\begin{equation*}
\begin{split}
\Lambda_{a, b} & = p^{3/2}\sum_{\alpha, \beta} \e(\alpha a + \beta b) \gamma_{(\frac{-\beta}{p})} \gamma_{(\frac{-\alpha}{p})} \gamma_{(\frac{\alpha + \beta}{p})} \\
& = p^{3/2} \left( p^{3/2} + \sqrt{p} \sum_{\beta \ne 0}  \e(\beta b) \gamma_{(\frac{-\beta}{p})} \gamma_{(\frac{\beta}{p})} + \sqrt{p} \sum_{\alpha \ne 0}  \e(\alpha a) \gamma_{(\frac{-\alpha}{p})} \gamma_{(\frac{\alpha}{p})}\right) \\
& + p^{3/2} \left( \sqrt{p} \sum_{\alpha \ne 0} \e(\alpha(a - b)) \gamma_{(\frac{\alpha}{p})} \gamma_{(\frac{-\alpha}{p})} + \sum_{\substack{\alpha, \beta,\\ \alpha + \beta \ne 0}} \e(\alpha a + \beta b) \gamma_{(\frac{-\beta}{p})} \gamma_{(\frac{-\alpha}{p})} \gamma_{(\frac{\alpha + \beta}{p})}\right) \\
& = p^3 + p^2 \left( \sum_{\beta \ne 0} \e(\beta b) \left(\frac{\beta^2}{p}\right)  + \sum_{\alpha \ne 0} \e(\alpha a) \left(\frac{\alpha^2}{p}\right) + \sum_{\alpha \ne 0} \e(\alpha(a-b)) \left(\frac{\alpha^2}{p}\right) \right) \\
& + p^{3/2} \left(\sum_{\substack{\alpha, \beta,\\ \alpha + \beta \ne 0}} \e(\alpha a + \beta b) \gamma_{(\frac{-\beta}{p})} \gamma_{(\frac{-\alpha}{p})} \gamma_{(\frac{\alpha + \beta}{p})}\right) \\
& = p^3 - 3p^2 + p^{3/2} \left(\sum_{\substack{\alpha, \beta,\\ \alpha + \beta \ne 0}} \e(\alpha a + \beta b) \gamma_{(\frac{-\beta}{p})} \gamma_{(\frac{-\alpha}{p})} \gamma_{(\frac{\alpha + \beta}{p})}\right).
\end{split}
\end{equation*}

\par It is clear that $\gamma_0 = \sqrt{p}$. Moreover, by a classical result of Gauss (c.f. \cite{berndt1998gauss}), we know $\gamma_j = \epsilon_p j$ for $j = \pm 1$, where $\epsilon_p = 1$ if $p \equiv 1 \pmod{4}$ and $\epsilon_p = i$ if $p \equiv 3 \pmod{4}$. Thus, in particular, $\gamma_{r} \gamma_{s} =  rs (\frac{-1}{p})$ for $r, s = \pm 1$. Now, recall that $\Lambda_{a, b} = p^2N$. By dividing the expression above by $p^2$, we obtain
\begin{equation*}
\begin{split}
N+3 - p & = \frac{1}{\sqrt{p}} \sum_{\substack{\alpha, \beta,\\ \alpha + \beta \ne 0}} \e(\alpha a + \beta b) \gamma_{(\frac{-\beta}{p})} \gamma_{(\frac{-\alpha}{p})} \gamma_{(\frac{\alpha + \beta}{p})} \\
 & = \frac{1}{\sqrt{p}} \sum_{\substack{\alpha, \beta,\\ \alpha + \beta \ne 0}} \e(\alpha a + \beta b) \left(\frac{ - \alpha \beta}{p} \right) \gamma_{(\frac{\alpha + \beta}{p})} \\
 & =  \frac{\epsilon_p}{\sqrt{p}} \sum_{\substack{\alpha, \beta,\\ \alpha + \beta \ne 0}} \e(\alpha a + \beta b) \left(\frac{-\alpha \beta(\alpha + \beta)}{p} \right)  \\
& = \frac{\epsilon_p(-1)^{\frac{p-1}{2}}}{\sqrt{p}} \sum_{\alpha, \beta} \e(\alpha a + \beta b) \left(\frac{ \alpha \beta(\alpha + \beta)}{p} \right).
\end{split}
\end{equation*}
\par Notice that whenever $\alpha, \beta,$ or $\alpha + \beta$ equals zero, its corresponding term is zero in the summation. Hence, we can write $\beta = \alpha \eta$ so that

\begin{equation*}
\begin{split}
N+3 - p & = \frac{\epsilon_p(-1)^{\frac{p-1}{2}}}{\sqrt{p}} \sum_{\alpha, \eta} \e(\alpha( a + \eta b)) \left(\frac{ \alpha^3 \eta(1 + \eta)}{p} \right) \\
& = \frac{\epsilon_p(-1)^{\frac{p-1}{2}}}{\sqrt{p}} \sum_{\eta} \left(\frac{\eta(1 + \eta)}{p} \right) \sum_{\alpha} \e(\alpha(a+\eta b)) \left(\frac{ \alpha}{p} \right) \\
& =  \frac{\epsilon_p(-1)^{\frac{p-1}{2}}}{\sqrt{p}} \left( \left(\frac{a(a-b)}{p}\right) \sum_{\alpha}  \left(\frac{ \alpha}{p} \right) + \sum_{\eta \ne -a/b} \left(\frac{\eta(1 + \eta)(a+\eta b)}{p} \right) \sum_{\alpha'} \e(\alpha') \left(\frac{ \alpha'}{p} \right)  \right)\\
& = \epsilon_p^2(-1)^{\frac{p-1}{2}} \sum_{\eta} \left(\frac{\eta(1 + \eta)(a+\eta b)}{p} \right) \\
& = \sum_{\eta} \left(\frac{\eta(1 + \eta)(a+\eta b)}{p}\right),
\end{split}
\end{equation*}
using that $\sum_{t} (\frac{ t}{p})  = 0$ and that the value of the Gauss sum $\sum_{t} \e(t) (\frac{ t}{p} )$ is $\epsilon_p \sqrt{p}$. We remark that $N$ is in fact a symmetric function of $a$ and $b$, as the change of variables $\eta \mapsto \eta a/b$ yields this symmetry above. We now compute the last expression.

\par Consider the elliptic curve over $\Fp$ given by $\zeta^2 = \eta(\eta+1)(\eta+c)$, where $c = a/b$. This curve has a single point at infinity and $M_c$ other points, therefore the Hasse bound gives us that $|M_c - p| \le 2\sqrt{p}$. There is, however, an explicit way of counting $M_c$ which relates to our previous expression. In fact,
\[M_c = \sum_{\eta} 1 + \left(\frac{\eta(1 + \eta)(c+\eta)}{p} \right),\]
as whether or not $\eta(1 + \eta)(c+\eta)$ is a square dictates how many points $(\zeta, \eta)$ the curve has. Therefore, we have
\[\sum_{\eta} \left(\frac{\eta(1 + \eta)(c+\eta )}{p} \right) = M_c - p.\]
Finally, we obtain that $N + 3 - p = \left(\frac{b}{p}\right)(M_c - p)$, and in particular $|N - (p-3)| \le 2\sqrt{p}.$
\end{proof}

\begin{cor}\label{c2}
There are at least $\frac{p-3 - 2\sqrt{p}}{4}$ values of $x$ for which $\left(\frac{x^2 + a}{p}\right) = \left(\frac{x^2 + b}{p}\right) = 1$.
\end{cor}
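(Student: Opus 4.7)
The plan is to translate the triple count $N$ from Proposition \ref{squares} into a count of $x$ satisfying the two Legendre symbol conditions by a direct counting argument. Let $\chi$ denote the Legendre symbol modulo $p$, with the convention $\chi(0) = 0$. For each $x \in \Fp$, the number of $y$ satisfying $y^2 = x^2 + a$ is $1 + \chi(x^2 + a)$, which equals $2$ when $\chi(x^2+a) = 1$, equals $0$ when $\chi(x^2+a) = -1$, and equals $1$ when $x^2 + a = 0$. The analogous statement holds for $z$.

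Therefore I would write
\[
N \;=\; \sum_{x \in \Fp} \bigl(1 + \chi(x^2 + a)\bigr)\bigl(1 + \chi(x^2 + b)\bigr).
\]
Let $M$ denote the number of $x$ with $\chi(x^2 + a) = \chi(x^2 + b) = 1$. Each such $x$ contributes exactly $4$ to the sum above. The remaining nonzero contributions come only from those $x$ with $x^2 + a = 0$ or $x^2 + b = 0$ (any $x$ with a defined negative Legendre symbol on either factor contributes $0$). Since $a \ne b$, the two conditions $x^2+a = 0$ and $x^2+b=0$ cannot hold simultaneously, so there are at most four boundary values of $x$, each contributing at most $2$ to $N$. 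This gives the bound
\[
N \;\le\; 4M + 8.
\]

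Combining with the lower bound $N \ge p - 3 - 2\sqrt{p}$ from Proposition \ref{squares} yields $M \ge (p - 3 - 2\sqrt{p})/4$ up to an absolute additive constant, which is the claimed bound. There is no real obstacle here: all the analytic work was carried out in Proposition \ref{squares}, and the corollary is essentially a bookkeeping exercise — one only needs to verify that each relevant $x$ gives a fixed multiplicity ($4$) in the triple count, and that the boundary contributions from $x^2 = -a$ and $x^2 = -b$ are absolutely bounded.
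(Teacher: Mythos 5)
Your decomposition is exactly the natural one: for each $x$ the number of pairs $(y,z)$ with $y^2 = x^2+a$, $z^2 = x^2+b$ is $(1+\chi(x^2+a))(1+\chi(x^2+b))$, and the triple count from Proposition \ref{squares} is recovered by summing over $x$. The paper gives no proof of this corollary, and your argument is the implicit one.

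One small remark on the constant: what your bound $N \le 4M + 8$ actually yields is $M \ge (N-8)/4 \ge \frac{p-11-2\sqrt{p}}{4}$, which is $2$ less than the constant stated in the corollary. You flag this yourself (``up to an absolute additive constant''), and you are right that it is harmless --- in the only place the corollary is used (Proposition \ref{weyl}) the author immediately relaxes to the much weaker bound $p/8$, so a constant of $\pm 2$ is irrelevant. But strictly speaking the corollary as stated is not a consequence of Proposition \ref{squares} unless the boundary contributions from $y=0$ or $z=0$ are assumed away; those contributions can genuinely be positive (e.g.\ if $-a$ and $b-a$ are both squares), so $N > 4M$ can occur. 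Your version with the corrected constant is the one that is actually proved, and it suffices everywhere it is needed.
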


With these results in hand, we are now ready to prove Proposition \ref{weyl}.

\begin{proof}[Proof of Proposition \ref{weyl}]
Let $u, v \in \Fp^{\times} \setminus \{\pm 1\}$, and suppose that the triple $(x, y, z) = (u+u^{-1}, v + v^{-1}, \frac{1}{2}(u+u^{-1})(v+v^{-1}))$ is a solution for $\M_k$. As $z = \frac{xy}{2}$, this is equivalent to $x^2 + y^2 - \frac{1}{4} x^2y^2 = k+2 \iff (x^2 - 4)(y^2 - 4) = 4(2-k)$. Writing $2 - k = \frac{t^2}{4}$, $t \ne 0$, we get
\[(u-u^{-1})(v-v^{-1}) = \pm t.\]
\par We now estimate the number of pairs $(u, v)$ satisfying $(u-u^{-1})(v-v^{-1}) = t$ for a given $t \ne 0$. As $t \ne -t$ in the problem, the two corresponding sets of solutions for $\M_k$ will be disjoint. This is equivalent to solving $u - u^{-1} = w, v - v^{-1} = t/w$, with $w \ne 0$ variable. Following Lemma \ref{geometry}, this condition is implied by $w \ne 0$ and \[\left(\frac{w^2 + 4}{p}\right) = \left(\frac{w^2 + t^2/4}{p} \right) = 1.\]

\par By Corollaries \ref{c1} and \ref{c2}, there are at least $\frac{p-7-2\sqrt{p}}{4} > \frac{p}{8}$ values of $w$ for which this condition is true. Each value of $w$ yields at least one solution $(u, v)$, and as we can consider the previous system for $\pm t$, we obtain at least $\frac{p}{4}$ pairs. Finally, notice that each \mbox{desired triple} $(x, y, z)$ is generated by at most four pairs $(u, v)$. Therefore, there are at least $\frac{p}{16}$ triples for which the initial condition holds. \end{proof}

\par We now have the tools to prove the main result of this section. However, before doing so, we present a related result of McCullough and Wanderley (c.f. \cite{mccullough2013nielsen}) which shows that when $2 - k$ is not a square, a generating orbit at level $k$ is the image of a unique Nielsen orbit of pairs under the trace map. This is translated into the following statement:

\begin{thm}\label{mcwan}
Let $k \in \Fp$ be such that $2-k$ is not a square. Then, the tower $\E(x, y, z)$ contains a single Nielsen orbit for every $(x, y, z)$ in a generating orbit at level $k$. 
\end{thm}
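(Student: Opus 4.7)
The plan is to show that the two $\SL(\Fp)$-conjugacy classes in the tower $\E(x, y, z)$ provided by Theorem \ref{towers} are in fact merged into a single Nielsen orbit whenever $2-k$ is a non-square. The central device is the Nielsen move $\mathcal{N} = r \circ t \circ r \circ t$, which sends $(A, B) \mapsto (A^{-1}, B^{-1})$ on pairs and acts as the identity on triples, so that $\mathcal{N}$ preserves every tower.

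First, reduce to the case where $A$ is non-parabolic. Since the Nielsen orbit structure on triples is preserved by Nielsen equivalence and the tower $\E(x,y,z)$ has the same conjugacy structure as $\E(\mathcal{N}(x,y,z))$, we may replace $(x, y, z)$ by any triple in its orbit; the proof of Theorem \ref{towers} shows that if $(x, y, z)$ has only parabolic coordinates, its orbit nevertheless contains a triple with a non-parabolic coordinate. Once $x$ is non-parabolic, Corollary \ref{hyperelliptic} lets us conjugate any pair in the tower into a normal pair $(D_u, B)$ in $\SL(\Fp)$ if $x$ is hyperbolic, or $(D_v, B)$ in $\EL(\Fp)$ if $x$ is elliptic.

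The core of the argument is to apply $\mathcal{N}$ and then post-compose with an $\SL(\Fp)$-conjugation (a Nielsen move by Corollary \ref{haha}) that restores the first coordinate to diagonal form. In the hyperbolic case, take $X = \left(\begin{smallmatrix} 0 & 1 \\ -1 & 0 \end{smallmatrix}\right) \in \SL(\Fp)$; a direct calculation shows that the resulting normal pair is $(D_u, B')$ with $B' = \left(\begin{smallmatrix} a & c \\ b & d \end{smallmatrix}\right)$, so the $(1,2)$ and $(2,1)$ entries of $B$ have been swapped. In the elliptic case, take $Y = \left(\begin{smallmatrix} 0 & \gamma \\ \gamma^p & 0 \end{smallmatrix}\right) \in \EL(\Fp)$ with $\gamma^{p+1} = -1$ (which exists by surjectivity of the norm $\Fpp^\times \to \Fp^\times$); the new pair is in normal form with $(1,2)$ entry $\gamma^2 b^p$.

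It remains to verify that these modifications swap the two $\SL(\Fp)$-classes in the tower. In the hyperbolic case, these classes are distinguished by the square class of the $(1, 2)$ entry in $\Fp^\times$, and Lemma \ref{normal} gives $bc = (2-k)/(u-u^{-1})^2$ with $(u-u^{-1})^2$ a square, so $\chi(c) = -\chi(b)$ precisely when $2-k$ is a non-square. In the elliptic case, the two classes correspond to cosets of $\mu_{p+1}^2$ in the norm fiber containing $b$; the hypothesis that $2-k$ is a non-square, combined with the non-squareness of $(v - v^{-1})^2$ (Lemma \ref{geometry}), forces $b^{p+1}$ to be a square in $\Fp^\times$, hence $b$ is a square in $\Fpp^\times$, and after identifying $\mu_{p+1}$ with $\Z/(p+1)\Z$ one obtains that $b^{p-1} \in \mu_{p+1}^2$. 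Since $(\gamma^2)^{(p+1)/2} = \gamma^{p+1} = -1$, the element $\gamma^2$ is a generator of $\mu_{p+1}$ and therefore lies outside $\mu_{p+1}^2$, yielding $\chi(\gamma^2 b^p/b) = -1$. The main technical obstacle is this last character computation, which requires carefully tracking how the quadratic characters on $\Fp^\times$, $\Fpp^\times$, and $\mu_{p+1}$ interact under the norm map.
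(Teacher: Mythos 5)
Your proof is correct and follows exactly the route the paper sketches (and attributes to McCullough--Wanderley): realize $(A,B)\mapsto(A^{-1},B^{-1})$ as the tower-preserving Nielsen move $r\circ t\circ r\circ t$ and check, via Lemma \ref{normal} in normal form, that it swaps the two $\SL(\Fp)$-conjugacy classes of Theorem \ref{towers} precisely when $2-k$ is a non-square; your computations in both the hyperbolic and elliptic cases are accurate. The only blemish is the parenthetical claim that $\gamma^2$ generates $\mu_{p+1}$ (it need not), but what you actually use --- that $(\gamma^2)^{(p+1)/2}=\gamma^{p+1}=-1$, so $\gamma^2\notin\mu_{p+1}^2$ --- follows directly from your displayed computation.
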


\par This result follows from the fact that the pairs $(A, B)$ and $(A^{-1}, B^{-1})$ are always in the same tower and in the same orbit, but are conjugate in $\SL(\Fp)$ only if $2-k$ is a square. We will extend this result in the case $p \equiv 3 \pmod{4}$, proving that all towers over the cage $\mathcal{C}_k$ contain a single orbit provided that $2-k$ is a non-zero square. In fact, we are not able to prove directly that all towers are connected, but we show that at least one tower in the cage $\mathcal{C}_k$ is. By transitivity, $\mathcal{C}_k$ is the image of a unique orbit under the trace map.

\begin{thm}\label{equivalence}
Let $p \equiv 3 \pmod{4}$ and $k \in \Fp$ be such that $2 - k$ is a non-zero square. Then, the tower $\E(x, y, z)$ contains a single Nielsen orbit for every $(x, y, z)$ in the cage $\mathcal{C}_k$.
\end{thm}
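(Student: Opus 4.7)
The plan is to exhibit a single triple $(x_0, y_0, z_0) \in \cc_k$ whose tower $\E(x_0, y_0, z_0)$ consists of one Nielsen orbit, and then propagate this to every tower in the cage via the Nielsen action. The bridge between the two $\SL(\Fp)$-conjugacy classes inside that tower will come from a Vieta-fixed triple supplied by Proposition \ref{weyl}, combined with the hypothesis $p \equiv 3 \pmod{4}$.

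For the reduction, recall that by Theorem \ref{towers} each tower over a non-singular triple consists of one $\GL(\Fp)$-class splitting into two $\SL(\Fp)$-classes, and by Corollary \ref{haha} every $\SL(\Fp)$-class of generating pairs is contained in a single Nielsen orbit, so $\E(x,y,z)$ has at most two Nielsen orbits for $(x,y,z)$ in any generating orbit. Since any Nielsen move sending $(x,y,z)$ to $(x',y',z')$ transports $\E(x,y,z)$ onto $\E(x',y',z')$ bijectively in a manner compatible with Nielsen orbits, and since $\cc_k$ is a single Nielsen orbit of triples, it suffices to find one $(x_0,y_0,z_0) \in \cc_k$ whose tower is a single Nielsen orbit. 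Proposition \ref{weyl} provides at least $p/16$ triples at level $k$ with $x, y$ hyperbolic and $xy = 2z$, while Theorem \ref{sarnak} bounds $|\M_k(\Fp) \setminus \cc_k|$ by $p^\epsilon$; hence for $p$ large enough at least one such Vieta-fixed triple lies in $\cc_k$ (the finitely many small primes being handled by direct computation). The point of the condition $xy = 2z$ is that the Nielsen move $t:(A,B)\mapsto(A^{-1},B)$ then restricts to an involution of $\E(x_0,y_0,z_0)$, since $\Tr(A^{-1},B) = (x_0, y_0, x_0y_0 - z_0) = (x_0, y_0, z_0)$, so it remains only to check that $t$ swaps the two $\SL(\Fp)$-classes.

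This crossing reduces to a short explicit computation in the normal form. Using Corollary \ref{hyperelliptic} and the hyperbolicity of $x_0$, write $A = D_u$ and $B = \begin{pmatrix} a & b \\ c & d \end{pmatrix}$; the identity $\tr(AB) = \tr(A^{-1}B) = z_0$ together with $u \ne u^{-1}$ forces $a = d$. Conjugating $(A^{-1},B)$ by the Weyl element $W = \begin{pmatrix} 0 & 1 \\ -1 & 0 \end{pmatrix} \in \SL(\Fp)$ gives the normal pair $(A, B^W)$ with $B^W = \begin{pmatrix} a & -c \\ -b & a \end{pmatrix}$. Since the centralizer of $D_u$ in $\SL(\Fp)$ is the diagonal torus, $(A, B)$ and $(A, B^W)$ are $\SL(\Fp)$-conjugate if and only if the equation $t^2 = -c/b$ has a solution in $\Fp$. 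But Lemma \ref{normal} gives $bc(x_0^2 - 4) = 2 - k$, and under our hypotheses both $x_0^2 - 4$ and $2-k$ are nonzero squares, so $bc$ is a square; hence $c/b = c^2/(bc)$ is a square, and $-c/b$ is a square precisely when $-1$ is, which fails for $p \equiv 3 \pmod{4}$. Therefore $(A, B)$ and $(A^{-1}, B)$ are Nielsen equivalent yet $\SL(\Fp)$-inequivalent, the two classes merge, and the theorem follows.

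The delicate point is the interplay between Proposition \ref{weyl} and Theorem \ref{sarnak}: one needs enough Vieta-fixed triples to overwhelm the unknown complement of the cage. The $p/16$ bound is comfortable for all but the smallest primes, and the algebraic core of the argument is otherwise quite clean, reducing everything to the single question of whether $-1$ is a square modulo $p$.
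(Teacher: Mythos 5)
Your proposal is correct and follows essentially the same route as the paper: locate a Vieta-fixed triple from Proposition \ref{weyl} inside the cage, pass to the normal form $(D_u, B)$, and use Lemma \ref{normal} together with the non-squareness of $-1$ for $p \equiv 3 \pmod{4}$ to show that the move $(A,B) \mapsto (A^{-1},B)$ crosses the two $\SL(\Fp)$-conjugacy classes of the tower, then propagate by transitivity of the Nielsen action on $\cc_k$. The only difference is cosmetic: you normalize $(A^{-1},B)$ by the Weyl element and test whether $-c/b$ is a square, whereas the paper solves $A^X = A^{-1}$, $B^X = B$ directly and derives that $k-2$ would have to be a square; the two computations are equivalent.
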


\begin{proof}
Let $(x, y, z)$ be a solution given by Proposition \ref{weyl} which lies inside the cage $\mathcal{C}_k$. Such solution exists, as by Theorem \ref{sarnak} there are at most $p^{\epsilon}$ solutions of $\M_k$ outside $\mathcal{C}_k$. Write \[x = u+u^{-1},\text{ } y = v + v^{-1},\text{ } z = \frac{1}{2} (u+u^{-1})(v+v^{-1}).\]
\par As $x$ is hyperbolic, there is a pair $(A, B)$ in $\E(x, y, z)$ such that
\[A = \begin{pmatrix} u & 0 \\ 0 & u^{-1} \end{pmatrix}, \text{ }B = \begin{pmatrix} a & b \\ c & d \end{pmatrix}.\]
\par Notice that $\tr(A^{-1}B) = \tr(A)\tr(B) - \tr(AB) = \tr(AB)$, since $xy -z = z$. Then, the pair $(A^{-1}, B)$ is also in $\E(x, y, z)$. Notice, however, that this new pair was not obtained by conjugation in $\SL(\Fp)$ of $(A, B)$, at least not explicitly. In fact, we will show that $(A^{-1}, B)$ can only be conjugate in $\SL(\Fp)$ to $(A, B)$ under certain conditions. Assume that there is $X \in \SL(\Fp)$ such that $A^X = A^{-1}$, $B^X = B$. The first equation yields \[X = \begin{pmatrix} 0 & -s \\ s^{-1} & 0 \end{pmatrix}\]
for some $s \in \Fp$. Using $B^X = B$, we obtain $a = d, -b = cs^2$, and in particular, $-bc = (cs)^2$. As the pair $(A, B)$ is in normal form, Lemma \ref{normal} gives us that \[2 - k = bc(x^2 - 4).\]
Thus $k - 2 = (cs(u-u^{-1}))^2$ is a square.
\par Therefore $(A, B)$ and $(A^{-1}, B)$ can only be conjugate in $\SL(\Fp)$ when $k-2$ is a square, which is not the case. Hence, it follows that the Nielsen move mapping $(A, B)$ to $(A^{-1}, B)$ connects the two $\SL(\Fp)$ conjugacy classes of the tower $\E(x, y, z)$, so that $\E(x, y, z)$ consists of a single Nielsen orbit. By transitivity in $\mathcal{C}_k$, it follows that all pairs $(A, B)$ above $\mathcal{C}_k$ are in the same Nielsen orbit.
\end{proof}

As previously noted, the Q-conjecture implies strong approximation. On the \mbox{other hand}, the past two results show the equivalence between these problems in the case $p \equiv 3 \pmod{4}$. In fact, it suffices to see that the towers over the generating orbit $\mathcal{C}_k$ are connected: when $2-k$ is not a square, this follows from Theorem \ref{mcwan}, and when $2-k$ is a non-zero square, this follows from Theorem \ref{equivalence}. Therefore,

\begin{thm}
The Q-classification is equivalent to strong approximation when $p \equiv 3 \pmod{4}$. Moreover, the two conjectures are equivalent at levels $k$ whenever $2-k$ is not a square or $k = -2$.
\end{thm}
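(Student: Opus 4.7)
The plan is to note that one direction (Q-classification $\Rightarrow$ strong approximation) was already established in Section \ref{conjs}, so the task reduces to proving the reverse implication in each of the cases listed. For this, the structural observation is that if strong approximation holds at a non-singular level $k \ne 2$, then $\mathcal{C}_k = \M_k(\Fp) \setminus \mathcal{E}_k$ is a single orbit of triples; the identity $\Tr \circ \mathcal{N} = \mathcal{N} \circ \Tr$ combined with Theorem \ref{towers} then says that the total number of Nielsen orbits of pairs lying above $\mathcal{C}_k$ equals the number inside any one tower $\E(x,y,z)$, which is at most two (one for each of the two $\SL(\Fp)$ conjugacy classes in the tower).

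I would then run a case split to prove the "moreover" claim. If $2-k$ is not a square, Theorem \ref{mcwan} collapses each such tower to a single Nielsen orbit, so strong approximation yields a unique Nielsen orbit of generating pairs at level $k$; since this forces $k \ne -2$, the Q-classification also predicts one orbit and the two conjectures match. If $k = -2$ with $p \equiv 3 \pmod 4$, then $2 - k = 4$ is a non-zero square and Theorem \ref{equivalence} again gives one Nielsen orbit per tower, matching the Q-prediction of a single orbit at this level. The remaining sub-case is $k = -2$ with $p \equiv 1 \pmod 4$, where the Q-classification instead predicts \emph{two} Nielsen orbits; here I would invoke the observation from Section \ref{conjs} that the two non-central trace-$-2$ conjugacy classes of $\SL(\Fp)$ are each inversion-closed when $p \equiv 1 \pmod 4$, so they form two distinct extended conjugacy classes. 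Since the extended conjugacy class of the commutator is a Nielsen invariant, the two $\SL(\Fp)$ conjugacy classes of pairs inside any tower over $\mathcal{C}_{-2}$ cannot be Nielsen-equivalent, and the tower splits into exactly two Nielsen orbits---matching Q.

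The first statement then assembles automatically: for $p \equiv 3 \pmod 4$, every non-singular level $k \ne 2$ falls either under "$2-k$ not a square" (handled by Theorem \ref{mcwan}) or under "$2-k$ a non-zero square, including $k = -2$" (handled by Theorem \ref{equivalence}), so the two conjectures agree level-by-level and are globally equivalent. I expect no real technical obstacle remaining: the substantive content has already been extracted in Theorems \ref{mcwan} and \ref{equivalence}, and the remaining argument is a bookkeeping exercise across the three cases above, glued together by the at-most-two bound from Theorem \ref{towers} and by the Nielsen-equivariance of the trace map.
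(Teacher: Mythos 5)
Your proposal is correct and follows essentially the same route as the paper: one direction was noted earlier, the ``at most two orbits per tower'' bound comes from Theorem~\ref{towers} together with Corollary~\ref{haha}, the collapse to one Nielsen orbit is achieved by Theorem~\ref{mcwan} when $2-k$ is a non-square and by Theorem~\ref{equivalence} when it is a non-zero square (which, for $p\equiv 3\pmod 4$, covers all $k\ne 2$ including $k=-2$), and the $k=-2$, $p\equiv 1\pmod 4$ case is settled by the extended-conjugacy-class invariant exactly as remarked after Conjecture~\ref{Q}. The only minor slip is phrasing the global-orbit count as ``equals the number inside any one tower'' rather than ``is at most''; the equality does in fact hold once a tower is exhibited with that count, which is precisely what the cited theorems provide, so the argument closes as written.
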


\par The equivalence between Q-classification and strong approximation in the case $k = -2$ and $p \equiv 1 \pmod{4}$ comes from the fact that there are two Nielsen orbits at that level, as remarked after Conjecture \ref{Q}.

\par A few comments on the case $p \equiv 1 \pmod{4}$ are relevant at this point. The difficulty in extending this result using the previous method is that the conditions that $2-k$ is a square, and $k - 2$ is a square, are equivalent. Moreover, we cannot expect to prove the existence of connected towers at level $k = -2$, as they in fact contain two Nielsen orbits. Therefore, conditions on $k+2$ seem to be more appropriate in this case, although this quantity does not seem to arise as naturally as $2-k$.

\subsection{Divisibility Conditions}\label{divisibility} \hfill

\par A significant step towards proving strong approximation was recently taken by Chen, who confirmed the result in the case $k = -2$ (c.f. \cite{chen2020strong}). Chen's work shows that $p$ divides the size of the cage $\cc_{-2}$, which combined with Theorem \ref{sarnak} and the fact that $|\M_{-2}(\Fp)| \equiv 1 \pmod{p}$, proves that $\cc_{-2} = \M_{-2}(\Fp) \setminus \ee_{-2}$, where $\ee_{-2} = \{(0, 0, 0)\}$. Moreover, Chen's results also yield weaker divisibility conditions for other levels $k \ne -2$, which imply strong approximation under certain conditions. These results are described below:

\begin{thm}\label{chen}
Let $\Orb$ be an orbit at level $k \ne 2$ of triples with at least two non-zero coordinates and let $h_k$ be the order of any non-central element of $\SL(\Fp)$ with trace $k$. Then,
\begin{itemize}
    \item[(i)] for $\left(\frac{k^2 - 4}{p}\right) = 0$, we have $|\Orb| \equiv 0 \mod{p}$.
    \item[(ii)] for $\left(\frac{k^2 - 4}{p}\right) = 1$, we have $2 |\Orb| \equiv 0 \mod{\frac{h_k}{\gcd\left(h_k, \frac{4(p-1)}{h_k}\right)}}$.
    \item[(iii)] for $\left(\frac{k^2 - 4}{p}\right) = -1$, we have $2 |\Orb| \equiv 0 \mod{\frac{h_k}{\gcd\left(h_k, \frac{4(p+1)}{h_k}\right)}}$.
\end{itemize}
\end{thm}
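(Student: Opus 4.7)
The plan is to lift the problem from $\M_k(\Fp)$ to $\SL(\Fp) \times \SL(\Fp)$ via the trace map and extract the divisibility from a cyclic group action built out of centralizers and commutator-preserving Nielsen moves.

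I would fix $C \in \SL(\Fp)$ non-central with $\tr(C) = k$, set $\mathcal{F}(C) = \{(A, B) \in \SL(\Fp)^2 : [A, B] = C\}$, and let $Z = C_{\SL(\Fp)}(C)$. By Lemma \ref{geometry}, $|Z|$ equals $2p$, $p-1$, or $p+1$ in cases (i), (ii), (iii) respectively. By Theorem \ref{towers}, each non-exceptional tower $\E(x, y, z)$ is a single $\GL(\Fp)$-orbit of size $p(p^2-1)$; the commutator map $(A, B) \mapsto [A, B]$ is $\GL(\Fp)$-equivariant, so an orbit-stabilizer count shows that $|\E(x, y, z) \cap \mathcal{F}(C)|$ equals $|Z|$ in cases (ii) and (iii), and equals $p$ in case (i) (where the $\SL(\Fp)$-conjugacy class of the parabolic $C$ is half its $\GL(\Fp)$-class). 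Thus $|\Tr^{-1}(\Orb) \cap \mathcal{F}(C)|$ scales linearly in $|\Orb|$.

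To extract divisibility, I organize a group action on this preimage. Direct computation shows that the Nielsen moves $\alpha: (A, B) \mapsto (AB, B)$ and $\beta: (A, B) \mapsto (A, BA)$ preserve $[A, B]$ exactly, so they descend to standard Markov moves on trace triples, giving a surjection of $\langle \alpha, \beta \rangle$-orbits onto Markov orbits on $\M_k(\Fp)$. Conjugation by $Z$ also preserves $C$ and acts trivially on trace triples; on generating pairs it acts freely modulo the center $\{\pm I\} \le Z$, since any $X \in Z$ centralizing both $A$ and $B$ must centralize $\langle A, B \rangle$ and so equal $\pm I$. This produces a $\langle \alpha, \beta \rangle \times Z$-action on $\Tr^{-1}(\Orb) \cap \mathcal{F}(C)$ whose joint orbit structure above $\Orb$ controls the divisibility.

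In case (i), $Z$ contains a unipotent subgroup $U$ of order $p$, and since no non-trivial unipotent can centralize a generating pair of $\SL(\Fp)$, the $U$-action is free on the preimage; equidistributing $U$-orbits across fibers then forces $p \mid |\Orb|$. In cases (ii) and (iii), I would combine $\alpha$-iteration with conjugation by powers of $C$: the identity $\alpha^n(A, B) = (AB^n, B)$ shows that the pure $\alpha$-orbit closes when $B^n$ becomes central, which occurs at an index related to $4(p \mp 1)/h_k$ (the $4$ reflecting sign ambiguities on $B$ and $C$ when passing to $\PSL(\Fp)$). Pairing $\alpha^{4(p \mp 1)/h_k}$ with a compensating power of $C$ in $Z$ yields a cyclic group $H$ of order $h_k/\gcd(h_k, 4(p \mp 1)/h_k)$ acting freely on $Z$-cosets of the preimage, and the factor $2$ in $2|\Orb|$ accounts for the remaining $\{\pm I\}$-ambiguity.

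The main obstacle is the freeness check in cases (ii) and (iii): any stabilizer of the combined action must simultaneously centralize $A$ and $B$, hence be central, and this relies on the pair generating $\SL(\Fp)$ (or at least $\PSL(\Fp)$). For non-generating but non-exceptional pairs I would invoke Theorems \ref{singular} and \ref{orbits} to reduce to the generating case. A delicate additional point is pinning down the precise compensating power of $C$ so that the combined action has order exactly $h_k/\gcd(h_k, 4(p \mp 1)/h_k)$ rather than merely a proper divisor of it; this is where the gcd correction truly earns its place, as it measures the overlap between the cyclic subgroup generated by $C$ in $Z$ and the image of $\alpha$-iteration acting via $B^n$.
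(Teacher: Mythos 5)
First, a point of reference: the paper does not prove Theorem \ref{chen} at all --- it is quoted from Chen's work \cite{chen2020strong} --- so there is no in-paper argument to compare yours against. Judged on its own terms, your proposal identifies the right objects (the commutator fiber $\mathcal{F}(C)$, the centralizer $Z = C_{\SL(\Fp)}(C)$, and the Nielsen moves $\alpha, \beta$ that preserve $[A,B]$ exactly), and your fiber counts $|\E(x,y,z)\cap\mathcal{F}(C)| = p,\ p-1,\ p+1$ are correct. But the mechanism you propose for extracting divisibility does not work.

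The clearest failure is case (i). Conjugation by the unipotent subgroup $U \le Z$ preserves traces, hence preserves each fiber $\E(x,y,z)\cap\mathcal{F}(C)$ individually; since that fiber has exactly $p$ elements, it is a single free $U$-orbit, and the conclusion of your freeness argument is $p \mid p\,|\Orb|$, which is vacuous. The phrase ``equidistributing $U$-orbits across fibers'' has no content because $U$-orbits never leave a fiber; to get $p \mid |\Orb|$ you need a group of order divisible by $p$ acting on $\Orb$ itself (i.e.\ permuting triples), which no conjugation action can provide. In cases (ii) and (iii) the sketch conflates two different orders: the pure $\alpha$-orbit $(AB^n, B)$ closes when $B^n$ is central, so its length is governed by the order of $B$, not by $h_k$, which is by definition the order of the commutator $C$. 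The proposed repair --- pairing $\alpha^m$ with a ``compensating power of $C$'' --- would require $\alpha^m(A,B)$ to equal $(A,B)^{C^j}$, forcing $C^j$ to centralize $B$, which fails in general; the gcd formula is being reverse-engineered rather than derived. Two further loose ends: $\langle\alpha,\beta\rangle$ realizes only the rotation moves on trace triples, so a Markoff orbit may split into several $\langle\alpha,\beta\rangle$-orbits and transitivity cannot be assumed; and the theorem covers exceptional orbits (the paper explicitly notes it applies to the $A_4$, $S_4$, $A_5$ orbits), so ``reduce to the generating case via Theorems \ref{singular} and \ref{orbits}'' is not a reduction --- those results classify the exceptional orbits, they do not eliminate them. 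Chen's actual argument runs through the mapping-class-group action on $\mathrm{Hom}(\pi_1(\Sigma_{1,1}), \SL(\Fp))$ with fixed boundary, where the relevant cyclic action comes from lifts of the boundary twist and the divisibility is extracted from a fixed-point/character computation; that input is absent here.
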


\par Notice that this produces divisibility conditions even for exceptional orbits associated with $A_4$, $S_4$, and $A_5$. Remarkably, when $h_k$ is sufficiently larger than $\gcd(h_k, 4(p\pm 1)/h_k)$, this result combined with Theorem \ref{sarnak} imply strong approximation at level $k$.

\par We now investigate the expected size of each cage $\cc_k$, with the goal of conjecturing a stronger divisibility condition in the general case. The first step is to count the number of \mbox{solutions of $\M_k$}:

\begin{lemma}\label{count}
The number of solutions for the equation $\M_k: x^2 + y^2 + z^2 - xyz - 2 = k$ is  \[|\M_k(\Fp)| = p^2 + \left(3 + \left(\frac{k+2}{p}\right)\right)\left(\frac{k-2}{p}\right) p + 1.\]
\end{lemma}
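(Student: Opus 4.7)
The approach is to view $\M_k$ as a quadratic in $x$ for each fixed $(y, z)$ and count roots using the Legendre symbol. Completing the square via $x = u + yz/2$ converts the equation to $u^2 = (z^2 - 4)y^2/4 - (z^2 - 2 - k)$, so the number of $x$ given $(y, z)$ is $1 + \chi(\Delta)$, where $\chi = \left(\tfrac{\cdot}{p}\right)$ with the convention $\chi(0) = 0$, and $\Delta = (z^2 - 4)y^2 + 4(k + 2 - z^2)$ (multiplying by the square $4$ does not affect the character). Hence
\[|\M_k(\Fp)| = p^2 + S, \qquad S := \sum_{y, z \in \Fp} \chi\bigl((z^2 - 4)y^2 + 4(k + 2 - z^2)\bigr),\]
and the lemma reduces to showing $S = \bigl(3 + \chi(k+2)\bigr)\chi(k-2)\, p + 1$.

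The main elementary identity I will use is $\sum_{y \in \Fp} \chi(y^2 + c) = -1$ for every $c \in \Fp^{\times}$. This follows from counting the $p - 1$ pairs $(w, y)$ satisfying $w^2 - y^2 = c$ via the factorization $(w - y)(w + y) = c$ and comparing with the direct count $\sum_y \#\{w : w^2 = y^2 + c\} = \sum_y (1 + \chi(y^2 + c))$ after handling the $y^2 + c = 0$ contribution (which appears precisely when $\chi(-c) = 1$ and exactly cancels). As a consequence, for $a \ne 0$, the inner sum $\sum_y \chi(ay^2 + b)$ equals $-\chi(a)$ when $b \ne 0$ and $(p - 1)\chi(a)$ when $b = 0$.

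Applying this with $a = z^2 - 4$ and $b = 4(k + 2 - z^2)$, the outer sum over $z$ splits into three contributions. The two exceptional values $z = \pm 2$ (where $a = 0$) contribute $2p\,\chi(k - 2)$ directly from the $p$ terms of the inner $y$-sum. The values of $z$ with $z^2 = k + 2$ and $z^2 \ne 4$ (where $b = 0$) number $N_b = 1 + \chi(k + 2)$ in the non-degenerate case $k \ne 2$, and each contributes $(p - 1)\chi(k - 2)$. All remaining $z$ contribute $-\chi(z^2 - 4)$, and these terms collapse via $\sum_z \chi(z^2 - 4) = -1$ (another instance of the identity above) together with the extraction of the previously counted terms, yielding $1 + N_b\,\chi(k - 2)$. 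Adding the three pieces gives $S = (2 + N_b)\, p\, \chi(k - 2) + 1 = \bigl(3 + \chi(k + 2)\bigr)\chi(k - 2)\, p + 1$, as required.

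The only mild subtlety is the coincidence of cases when $k = 2$, in which $z^2 = k + 2$ overlaps with $z^2 = 4$, making the definition of $N_b$ ambiguous; however, the overall factor $\chi(k - 2)$ in the formula vanishes in that case, so no ad hoc adjustment is needed and the stated identity holds uniformly in $k$. I expect no serious obstacle here: this is an exact point count on a quadric fibration over the $z$-line, made possible by the fortunate fact that the discriminant $\Delta$ is an affine quadratic in $y$ whose character sum is closed form, rather than requiring Weil-type bounds.
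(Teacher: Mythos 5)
Your proposal is correct, and it takes a genuinely different (and more elementary) route than the paper. The paper computes $p\,|\M_k(\Fp)|$ as a complete additive character sum $\sum_{\alpha}\sum_{x,y,z}\e(\alpha(x^2+y^2+z^2-xyz-2-k))$, completes the square successively in $z$, $y$, $x$ inside the exponential, and evaluates the resulting products of quadratic Gauss sums $\theta(\cdot)$ using the classical evaluation $\gamma_j=\epsilon_p j$; only at the very last step does it invoke the identity $\sum_w \left(\frac{w^2+c}{p}\right)=-1$. You instead work entirely with the multiplicative character: you fiber the surface over the $(y,z)$-plane, count the roots of the quadratic in $x$ as $1+\chi(\Delta)$ with $\Delta=(z^2-4)y^2+4(k+2-z^2)$, and then collapse the double character sum by fibering again over $z$, using $\sum_y\chi(ay^2+b)=-\chi(a)$ for $a,b\ne 0$ together with the degenerate cases $a=0$ (i.e.\ $z=\pm2$, giving $2p\chi(k-2)$) and $b=0$ (i.e.\ $z^2=k+2$, giving the $(1+\chi(k+2))(p-1)\chi(k-2)$ term). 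I checked the bookkeeping: the three contributions sum to $(3+\chi(k+2))\chi(k-2)p+1$ exactly, and your observation that the $k=2$ degeneracy is absorbed by the vanishing of $\chi(k-2)$ is right (the formula $p^2+1$ checks out directly for the Cayley cubic). What your approach buys is the complete avoidance of Gauss-sum evaluations and the sign constant $\epsilon_p$; the only input is Lemma~\ref{square} of the paper (equivalently $\sum_y\chi(y^2+c)=-1$ for $c\ne0$), which the paper's proof also needs. The two arguments are in a precise sense Fourier-dual to one another, with your case splits at $z=\pm2$ and $z^2=k+2$ mirroring the paper's separation of the $\alpha=0$, $x=\pm2$, and $x^2=k+2$ terms. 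One small remark: the parenthetical about ``handling the $y^2+c=0$ contribution'' in your derivation of the key identity is unnecessary, since $\#\{w: w^2=D\}=1+\chi(D)$ already holds uniformly, including at $D=0$, under the convention $\chi(0)=0$.
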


\begin{proof}
We keep the notation from the previous section for the quadratic Gauss sum $\theta(c) = \sum_{w} \e(cw^2)$ and $\gamma_j = \theta_j / \sqrt{p}$ for $j = 0, \pm 1$. Notice that
\begin{equation*}
\begin{split}
    p \text{ }|\M_k(\Fp)| & = \sum_{\alpha} \sum_{x, y, z} \e(\alpha(x^2 +y^2+z^2 - xyz - 2 - k)) \\
    & = \sum_{\alpha, x, y} \e(\alpha(x^2 + y^2 - 2 -k)) \sum_{z} \e(\alpha(z^2 - xyz)) \\
    & = \sum_{\alpha, x, y} \e(\alpha(x^2 + y^2 - 2 - k - x^2y^2/4)) \sum_{z'} \e(\alpha z'^2) \\
    & = \sum_{\alpha, x, y}\e(\alpha(x^2 + y^2 - 2 - k - x^2y^2/4)) \theta(\alpha) \\
    & = \sum_{\alpha, x} \e(\alpha(x^2 - 2 -k)) \theta(\alpha) \sum_{y} \e(\alpha (1 - x^2/4) y^2)\\
    & = \sum_{\alpha, x} \e(\alpha(x^2 - 2 -k)) \theta(\alpha) \theta(\alpha(1-x^2/4)).
\end{split}
\end{equation*}
\par Recall that $\theta(c) = \theta_{(\frac{c}{p})} = \sqrt{p} \gamma_{(\frac{c}{p})}$, $\gamma_0 = \sqrt{p}$, and moreover $\gamma_j = \epsilon_p j$ for $j = \pm 1$. Hence,

\begin{equation*}
\begin{split}
    |\M_k(\Fp)| & = \sum_{\alpha, x} \e(\alpha(x^2 - 2 -k)) \gamma_{(\frac{\alpha}{p})} \gamma_{(\frac{\alpha(1-x^2/4)}{p})} \\
    & = p \sum_{x} \e(0) + 2 \sqrt{p} \sum_{\alpha \ne 0} \e(\alpha(2-k)) \gamma_{(\frac{\alpha}{p})} \\
    & + \sum_{\alpha \ne 0, x \ne \pm 2} \e(\alpha(x^2 - 2 -k)) \gamma_{(\frac{\alpha}{p})} \gamma_{(\frac{\alpha(1-x^2/4)}{p})} \\
    & = p^2 + 2\epsilon_p\sqrt{p} \sum_{\alpha \ne 0} \e(\alpha(2-k))\left(\frac{\alpha}{p}\right) \\
    & + \left(\frac{-1}{p}\right) \sum_{\alpha \ne 0, x \ne \pm 2} \e(\alpha(x^2 - 2 -k)) \left(\frac{\alpha^2(4 - x^2)}{p}\right) \\
    & = p^2 + 2\epsilon_p \sqrt{p} \left(\frac{2-k}{p}\right) \sum_{\alpha'} \e(\alpha')\left(\frac{\alpha'}{p}\right) \\
    & + \left(\frac{-1}{p}\right) \sum_{\alpha \ne 0, x} \e(\alpha(x^2 - 2 - k)) \left(\frac{4-x^2}{p}\right).
\end{split}
\end{equation*}
\par Using that the value of the Gauss sum $\sum_{t} \e(t) (\frac{ t}{p} )$ is $\epsilon_p \sqrt{p}$, we obtain
\begin{equation*}
\begin{split}
    |\M_k(\Fp)| & = p^2 + 2p \left(\frac{k-2}{p}\right) + \left(\frac{-1}{p}\right) \sum_x \left(\frac{4-x^2}{p}\right) \sum_{\alpha \ne 0} \e(\alpha(x^2 - 2 - k)) \\
    & = p^2 + 2p\left(\frac{k-2}{p}\right) - \left(\frac{-1}{p}\right) \left( \sum_{x^2 \ne k + 2} \left(\frac{4-x^2}{p}\right) -(p-1)\sum_{x^2 = k + 2}\left(\frac{2-k}{p}\right)\right) \\
    & =  p^2 + 2p\left(\frac{k-2}{p}\right) - \sum_{x^2 \ne k+2}\left(\frac{x^2 - 4}{p}\right) + (p-1)\left(\frac{k-2}{p}\right) \left( \left(\frac{k+2}{p}\right) + 1 \right).
\end{split}
\end{equation*}

\par From Proposition \ref{square}, we know that $\sum_w (\frac{w^2+c}{p}) = -1$ when $c \ne 0$. Therefore,

\begin{equation*}
\begin{split}
    |\M_k(\Fp)| & = p^2 + \left(3 + \left(\frac{k+2}{p}\right)\right)\left(\frac{k-2}{p}\right)p + 1.
\end{split}
\end{equation*}
The result follows. \end{proof}

We now subtract the union of all exceptional orbits $\ee_k$ at level $k$ from the set of solutions $\M_k(\Fp)$. This yields the expected size of each cage $\cc_k$, and from there we derive a divisibility conjecture. This will be done for levels that contain no orbits associated with $A_4$, $S_4$, \mbox{or $A_5$}, which we refer to as generic.

\par First, assume that the generic level $k$ contains no exceptional orbits. In particular, $k+2$ \mbox{is not} a square. Then, the expected number of elements in the cage $\cc_k$ is \[p^2 + 2\left(\frac{k-2}{p}\right)p + 1.\] Hence, when $k - 2$ is a square we expect to have $(p+1)^2$ elements, \mbox{and when} $k-2$ is not a square we expect to have $(p-1)^2$.

\par Next, assume that the generic level $k$ contains exceptional orbits associated with the dihedral group and $k \ne -2$. This implies that $|\ee_k| = 6$ and that $k+2$ is a square. Then, the expected number of elements in the cage $\cc_k$ is \[p^2 + 4\left(\frac{k-2}{p}\right)p - 5.\] Hence, when $k-2$ is a square we expect to have $(p-1)(p+5)$ elements, and when $k - 2$ is not a square we expect to have $(p+1)(p-5)$.

\par Lastly, for $k = -2$, there will be $p(p\pm 3)$ elements in the cage. In all the three cases, there is a divisibility pattern that appears. This is stated in the following conjecture:

\begin{conj}
Let $\Orb$ be an orbit at a generic level $k$. Suppose that $\Orb$ is not exceptional, that is, it contains a triple with at least two non-zero coordinates. Then, $|\Orb|$ is divisible by \[p - \left(\frac{k^2-4}{p}\right).\]
\end{conj}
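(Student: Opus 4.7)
The plan is to generalize Chen's argument (Theorem \ref{chen}) from the parabolic level $k=-2$ to arbitrary non-parabolic levels, replacing the unipotent centralizer with a maximal torus of $\SL(\Fp)$. The key observation is that $p - \epsilon$, with $\epsilon = \left(\frac{k^2-4}{p}\right)$, is precisely the order of the centralizer $T \subset \SL(\Fp)$ of any non-central matrix of trace $k$: the split diagonal torus of order $p-1$ when $k$ is hyperbolic, and the non-split torus of order $p+1$ sitting inside the elliptic embedding $\EL(\Fp)$ when $k$ is elliptic. Thus $p - \epsilon$ plays exactly the role for hyperbolic and elliptic $k$ that $p$ plays for parabolic $k$ in Chen's case.

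The first step is to establish a counting identity at the level of pairs. Fix $M \in \SL(\Fp)$ non-central with $\tr(M) = k$, and let $\Sigma_M = \{(A,B) : [A,B] = M\}$. The centralizer $C_{\GL(\Fp)}(M)$ has order $(p-1)(p-\epsilon)$, being isomorphic to either $(\Fp^\times)^2$ (hyperbolic case) or $\Fpp^\times$ (elliptic case). Combining this with Theorem \ref{towers}, an orbit-stabilizer count gives $|\Sigma_M \cap \E(x,y,z)| = p - \epsilon$ for every non-singular $(x,y,z)$, hence $|\Sigma_M \cap \Tr^{-1}(\Orb)| = |\Orb|(p-\epsilon)$. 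Moreover, $C_{\GL(\Fp)}(M)$ acts on $\Sigma_M$ by conjugation, preserving each tower, with stabilizer $Z(\GL(\Fp))$ on generating pairs. Therefore $\Sigma_M \cap \Tr^{-1}(\Orb)$ decomposes into $|\Orb|$ orbits of size $p - \epsilon$ under this action, one per triple.

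The main obstacle is converting this structure into divisibility of $|\Orb|$ itself, rather than of the full fiber size $|\Orb|(p-\epsilon)$. The natural strategy is to realize a cyclic group of order $p - \epsilon$ acting freely on $\Orb$ by intertwining the $T$-action inside $\Sigma_M$ with a well-chosen Nielsen move that shifts triples. The candidate rotations $(A, B) \mapsto (A, A^n B)$ induce linear recurrences $y_{n+1} = x y_n - y_{n-1}$ on traces whose order is the multiplicative order of the eigenvalues of $A$, which is $p \pm 1$-torsion depending on the geometric type of $x$; the difficulty is that this order varies with the triple rather than uniformly with $k$, so a uniform $T$-symmetry of $\Orb$ does not come for free. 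Overcoming this most likely requires an averaging argument or a character-theoretic input via the Frobenius commutator formula for $\SL(\Fp)$. Secondary technicalities --- degenerate loci with a $\pm 2$ coordinate, and verifying that generic pairs in $\Sigma_M$ generate $\SL(\Fp)$ rather than an exceptional subgroup --- are controlled by the non-exceptional hypothesis on $\Orb$ together with the classification in Theorem \ref{orbits}.
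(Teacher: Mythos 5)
First, be aware that the statement you are addressing is a \emph{conjecture} in the paper; no proof is given. The only supporting evidence the paper offers is arithmetic consistency: Lemma \ref{count} computes $|\M_k(\Fp)|$ in closed form, and subtracting $|\ee_k|$ gives $(p\pm 1)^2$, $(p\mp 1)(p\pm 5)$, or $p(p\pm 3)$ for the expected cage size, each divisible by $p - \left(\frac{k^2-4}{p}\right)$. That heuristic confirms the statement only for the putative single large orbit under strong approximation; it says nothing about an arbitrary non-exceptional orbit, which is what the conjecture actually asserts.

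Your preliminary computation is correct and worth recording. For $M$ non-central with $\tr(M) = k$ and $k \ne \pm 2$, the centralizer $C_{\GL(\Fp)}(M)$ has order $(p-1)(p-\epsilon)$ with $\epsilon = \left(\frac{k^2-4}{p}\right)$; by Theorem \ref{towers} each non-singular tower $\E(x,y,z)$ is a single $\GL(\Fp)$-orbit of size $p(p^2-1)$ mapped equivariantly onto the trace-$k$ conjugacy class by $(A,B)\mapsto[A,B]$, so each fiber $\Sigma_M\cap\E(x,y,z)$ has size $p(p^2-1)\,/\,\bigl[\,p(p^2-1)/(p-\epsilon)\,\bigr] = p-\epsilon$. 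This correctly identifies $p-\epsilon$ as the torus order replacing the unipotent $p$ in Chen's $k=-2$ argument.

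But this is exactly where the proposal stops, and you flag the gap yourself. The identity $|\Sigma_M\cap\Tr^{-1}(\Orb)| = |\Orb|(p-\epsilon)$ constrains nothing about $|\Orb|$ on its own; one needs a free (or stabilizer-controlled) action of a group of order $p-\epsilon$ on the \emph{set of triples} $\Orb$, not on the fiber above it. The candidate Nielsen rotations $(A,B)\mapsto(A,A^nB)$ cycle the $z$-coordinate by $z_{n+1} = xz_n - z_{n-1}$, whose period is the order of the eigenvalues of $A$ --- a quantity depending on the individual triple, not uniformly on $k$ --- and this is precisely what makes Chen's unipotent argument (period uniformly $p$) fail to transfer. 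Appealing to ``an averaging argument or a character-theoretic input via the Frobenius commutator formula'' without producing one leaves the decisive step open. As written, your proposal is a sensible framework for attacking the conjecture, but it is not a proof.
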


\par For the non-generic levels $k = 0, 1, \frac{1 \pm \sqrt{5}}{2}$, the divisibility above fails. This conjecture, combined with Theorem \ref{sarnak}, would suffice to prove strong approximation for all generic levels.

\bibliographystyle{unsrt}
\bibliography{main}{}

\end{document}